\newtheorem{theorem}{Theorem}
\newtheorem{corollary}[theorem]{Corollary}
\newtheorem{definition}{Definition}
\newtheorem{lemma}[theorem]{Lemma}
\newtheorem{proposition}[theorem]{Proposition} 
\newtheorem{remark}{Remark}
\newcommand{\NN}{{\mathbf N}}
\newcommand{\ZZ}{{\mathbf Z}}
\newcommand{\RR}{{\mathbf R}}
\newcommand{\EU}{{\mathbf S}}
\newcommand{\vv}{{\mathbf v}}
\newcommand{\ww}{{\mathbf w}}
\newcommand{\bg}{{\mathbf g}}
\newcommand{\br}{{\mathbf r}}
\newcommand{\ds}{\diamondsuit}
\newcommand{\GG}{{\mathcal G}_T}
\newcommand{\Ac}{{\mathcal A}}
\DeclareMathOperator{\tr}{tr}
\DeclareMathOperator{\dist}{dist}
\newcommand{\dpt}{\displaystyle}
\title[Bifurcations from  an attracting heteroclinic cycle under periodic forcing]{Bifurcations from  an attracting  heteroclinic cycle \\ under periodic forcing
\\ \today}
\author[I.S. Labouriau and A.A.P. Rodrigues]{
Isabel S. Labouriau
\and Alexandre A. P. Rodrigues\\
Centro de Matem\'atica
da Universidade do Porto and\\
Faculdade de
Ci\^encias, Universidade do Porto}
\address{Centro de Matem\'atica
da Universidade do Porto and\\
Faculdade de
Ci\^encias, Universidade do Porto \\
Rua do Campo Alegre,
687, 4169-007 Porto, Portugal }
\thanks{Centro de Matem\'atica da Universidade do Porto (CMUP --- UID/MAT/00144/2013) is funded by FCT (Portugal) with national (MEC) and European structural funds through the programs FEDER, under the partnership agreement PT2020.   A.A.P. Rodrigues  aknowledges  financial support from Program INVESTIGADOR FCT (IF/00107/2015). Part of this work has been written during AR stay in Nizhny Novgorod University partially supported by the grant RNF 14-41-00044.}
\email{ islabour@fc.up.pt \quad alexandre.rodrigues@fc.up.pt }
\begin{document}

\begin{abstract}
There are few examples of non-autonomous vector fields exhibiting complex dynamics that may be proven analytically. We analyse a family of periodic perturbations of a weakly attracting robust heteroclinic network defined on the two-sphere. We derive the first return map near  the heteroclinic cycle for small amplitude of the perturbing term, and we reduce the analysis of the non-autonomous system to that of a two-dimensional map on a cylinder.

Interesting dynamical features arise from a discrete-time Bogdanov-Takens bifurcation.
When the  perturbation strength is small  the first return map has an attracting invariant closed curve  that is not contractible on the cylinder.
Near the centre of frequency locking there are parameter values with  bistability: the invariant curve coexists with an attracting fixed point.
Increasing the perturbation strength there are periodic solutions that bifurcate into a closed contractible invariant curve and into a region where the  dynamics is conjugate to a full shift on two symbols.
\end{abstract}

    \maketitle

\bigbreak

\bigbreak
\textbf{Keywords:}  periodic forcing, heteroclinic cycle, global attractor, bifurcations, bistability

\bigbreak
\textbf{2010 --- AMS Subject Classifications} 
{Primary: 37C60;   Secondary: 34C37, 34D20, 37C27, 39A23, 34C28}

\bigbreak

\section{Introduction}
Archetypal examples of robust heroclinic cycles have been studied by  Guckenheimer and Holmes \cite{GH88}, and May and Leonard \cite{ML75}, using a system of Lotka-Volterra equations.  
The authors found saddle-equilibria on the axes and attracting heteroclinic cycles and networks. 

A heteroclinic cycle in an autonomous dynamical system consists of a connected union of saddle-type invariant sets and heteroclinic trajectories connecting them. A heteroclinic network is a connected union of heteroclinic cycles. In equivariant systems the existence of invariant subspaces may force the existence of connecting trajectories between flow-invariant sets; heteroclinic cycles become robust in the sense that the connections persist under small symmetry-preserving perturbations. In generic dynamical systems without symmetry or other constraints, such configurations are structurally unstable.

In classical mechanics, dissipative non-autonomous systems received only limited attention, in part because it was believed that, in these systems, all trajectories tend toward Lyapunov stable sets (fixed points or periodic solutions). Evidence that second order equations with a periodic forcing term can have interesting behavior first appeared in the study of van der Pol's equation, which describes an oscillator with nonlinear damping. Results of \cite{CL} pointed out  an attracting set more complicated than a fixed point or an invariant curve. Levinson obtained detailed information for a simplified model \cite{Ln}.

Examples from the dissipative category include the equations of Lorenz, Duffing equation and Lorentz gases acted on by external forces \cite{CELS}. 
The articles \cite{Wang2013,Wang2011} deal with heteroclinic tangles in time-periodic perturbations in the dissipative context and show, for a set of parameters with positive Lebesgue measure, the existence of an attracting torus, of infinitely many horseshoes and of strange attractors with SRB measures.

While some progress has been made, both numerically and analytically, the number of differential equations with periodic forcing whose flows exhibit attracting heteroclinic networks,  for which a rigorous global description of the dynamics is available, has remained small.
To date there has been very little systematic investigation of the effects of perturbations that are time-periodic, despite being natural for the modelling of many biological effects, see Rabinovich \emph{et al} \cite{Rabinovich06}.

\section{The object of study and main results}
For $\gamma, \omega\in \RR^+_0$, the focus of this paper is on the 
 following set of the ordinary differential equations with a periodic forcing:
\begin{equation}
\label{general}
\left\{ 
\begin{array}{l}
\dot x = x(1-r^2)-\alpha x z +\beta xz^2 +\gamma(1-x)\sin (2\omega t)\\
\dot y =  y(1-r^2) + \alpha y z + \beta y z^2 \\
\dot z = z(1-r^2)-\alpha(y^2-x^2)-\beta z (x^2+y^2) 
\end{array}
\right.
\end{equation}
where 
$$
r^2= x^2+y^2+z^2, \qquad \beta<0<\alpha, \qquad 
 |\beta|<\alpha \quad \Rightarrow\quad \beta^2<8 \alpha^2.
$$
 The amplitude of the perturbing term is governed by  $\gamma>0$.
 We have chosen the perturbing term 
$$
\gamma(1-x)\sin(2\omega t) .
$$
 It appears only in the first coordinate
  for two reasons: first, it simplifies the computations. Secondly, it allows comparison with previous work by other authors \cite{AHL2001, ACL06,DT3, Rabinovich06, TD1}. We denote the vector field associated to \eqref{general} by $F_\gamma$. 

\begin{remark}
\label{cos e sin}
The perturbation term $\sin (2\omega t)$ may be replaced by $f(2\omega t)$ where $f$ is  any $2\pi$-periodic and continuously differentiable function.  
In some places we use the property $f''(t)=- f(t)$.
\end{remark}

\subsection{The unperturbed system ($\gamma=0$)}
\label{gamma=0}
The dynamics associated to this equation has been systematically studied in \cite{ACL06, RL2014}. For the sake of completeness, we recall its main properties. 
The vector field $F_0$ has two symmetries of order 2:
$$
\kappa_1(x,y,z)=(-x,y,z)
\qquad\mbox{and}\qquad
\kappa_2(x,y,z)=(x,-y,z) 
$$
forming a symmetry group isomorphic to  $\ZZ_2 \oplus \ZZ_2$.
The symmetry $\kappa_2$ remains after the perturbation governed by $\gamma$. 
The unit sphere $\EU^2$ is  flow-invariant and globally  attracting.  The points  $\vv= (0,0,1)$  and $\ww= (0,0,-1)$ are equilibria.
From the symmetries $\ZZ_2 \oplus \ZZ_2$, it follows that the planes $x=0$ and $y=0$ are flow-invariant, and hence they meet $\EU^2$ in two flow-invariant circles connecting the equilibria $(0,0,\pm 1)$ -- see Figure \ref{unpert1}. Since  $\beta<0<\alpha$ and $\beta^2<8\alpha^2$, then these two equilibria are saddles, and there are heteroclinic trajectories going from each equilibrium to the other one. 
More precisely, the expanding and contracting eigenvalues $E_p$ and $-C_p$ of  the derivative of the vector field $F_0$ at $p \in \{\vv, \ww\}$ are:
$$
E_\vv= E_\ww= \alpha+\beta>0 \qquad \text{and} \qquad
C_\vv= C_\ww= \alpha-\beta>0,
$$
with $\widehat\delta = \dfrac{C_\vv}{E_\vv}=\dfrac{C_\ww}{E_\ww}=
 \dfrac{\alpha-\beta}{\alpha+\beta}>0$. 
The origin is a repellor.

\begin{figure}
\begin{center}
\includegraphics[width=15cm]{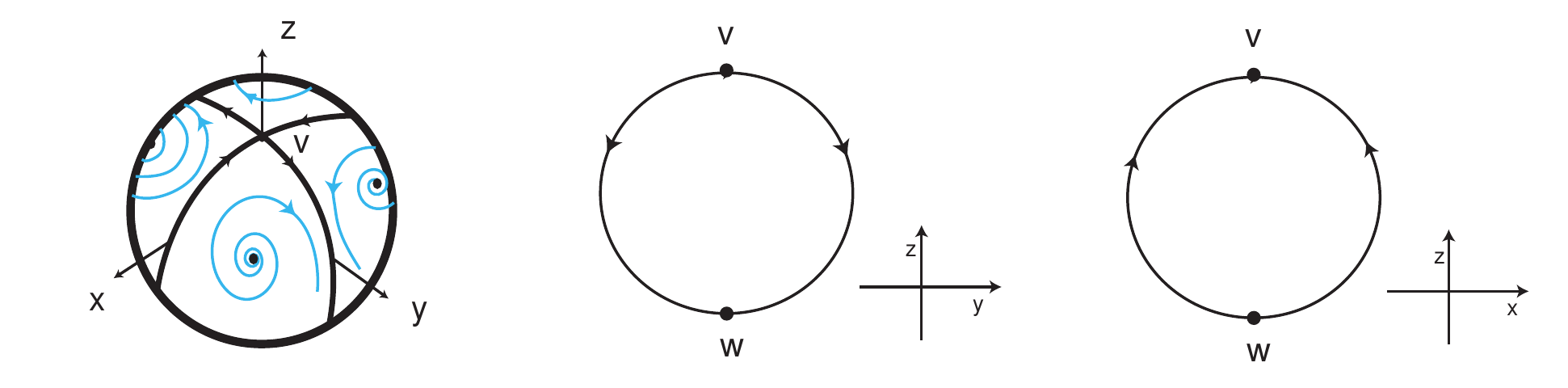}
\end{center}
\caption{\small Sketch of the heteroclinic connections when $\gamma=0$ and $\alpha>0$. When $\alpha<0$, the arrows reverse orientation. }
\label{unpert1}
\end{figure}

Considering the system restricted to $\EU^2$,
equivariance forces the invariant manifolds in $\EU^2$ of $\vv$ and $\ww$ to be in a very special position: they coincide. 
In $\EU^2$, the invariant manifolds of $\vv$ and $\ww$ are one-dimensional and contained in the invariant circles $Fix(\ZZ_2 (\kappa_j)) \cap \EU^2$, $j=1,2$, giving rise to a heteroclinic network $\Sigma_0$. 
 In the restriction to each of the invariant planes $Fix(\kappa_j)$, $j=1,2$ the equilibria $\vv$, $\ww$ have a saddle-sink connection, so this network is persistent under perturbations that preserve the symmetry, and in this sense it is \emph{robust}.
 
For all non-equilibrium points $p\in Fix(\kappa_1)\cap \EU^2$, we have $\omega(p)= \{\ww\}$, whereas for $p\in Fix(\kappa_2)\cap \EU^2$, we have $\omega(p)= \{\vv\}$, as in Figure~\ref{unpert1}.

The heteroclinic network $\Sigma_0$ 
 is asymptotically stable by the Krupa and Melbourne criterion \cite{KM1, KM2}.  Note that:
$$
\delta=(\widehat\delta)^2= \frac{C_\vv}{E_\vv} \frac{C_\ww}{E_\ww}= \frac{(\alpha-\beta)^2}{(\alpha+\beta)^2}>1.
$$
The constant $\delta$ measures the strength of attraction of the cycle in the absence of perturbations. There are no periodic solutions near $\Sigma_0$   because $\delta>1$.
 Typical trajectories near the heteroclinic network $\Sigma_0$ spend increasing amounts of time near each saddle point on each occasion they return close to it.
  In some places  we assume that we are in the weakly attracting case
   $\delta \gtrsim1$, we make the assumption explicitly when it is used.
 The case $\delta = 1$, $\gamma=0$ corresponds to a \emph{resonant bifurcation} of the robust heteroclinic cycle -- this case has been explored by Postlethwaite and Dawes in  \cite{PD2005, PD2006}.

\subsection{A pull-back attractor for small $\gamma$}
Kloeden and Rasmussen \cite{KR} have results connecting attractors for autonomous systems and their perturbations, that may be applied here.
We have that $\Sigma_0$ is a global attractor of the autonomous flow ($\gamma=0$), the vector field $F_0$ is uniformly Lipschitz and 
the periodic perturbation term $\sin (2\omega t)$ is bounded.
Then Section~11 of \cite{KR} allows us to conclude that the non-autonomous system \eqref{general} generates a \emph{process} which has a pullback attractor  $\Sigma_\gamma$ such that 
$$
\forall t\in \RR, \qquad \lim_{\gamma \rightarrow 0} \dist(\Sigma^\gamma_t, \Sigma_0)=0,
$$
where $\dist$ is the euclidean distance on $\RR^3$. 
Moreover, for a given $\gamma>0$, 
the sets $\Sigma^\gamma_{t}$ have the same  Haussdorf dimension for all $t\in \RR$. 
This suggests that solutions of the perturbed system \eqref{general} should make 
excursions around the ghost of $\Sigma_0$.
In this article we explore the resulting dynamics.

Dawes and T.-L. Tsai \cite{DT3, TD2, TD1} presented preliminary results on the perturbation of  the example studied in \cite{GH88}. 
They identified three distinct dynamical regimes depending on whether $\delta>1$ is or not close to 1,
we discuss these in Section~\ref{secDiscussion} below.
Here we deal with the case $\delta>1$ in  general.
Our results provide  insight into the dynamics of a  non-autonomous  periodic forcing of an autonomous equation with a compact attractor.

Our purpose in writing this paper is not only to point out the range of phenomena that can occur when simple non-linear equations are periodically forced, but to bring to the foreground the techniques that have allowed us to reach these conclusions in a relatively straightforward manner. These techniques are clearly not limited to the systems considered here. It is our hope that they will find applications in other dynamical systems, particularly those that arise naturally from mechanics or physics.

\subsection{Main results and structure of the paper}
\label{main results}

We now  describe briefly the main results and the contents of this paper. 
 Since there is a large number of constants and parameters used in the article, we have included a list of notation as an appendix.

Expressions for the Poincar\'e  first return map to a section transverse to the connection $[\ww\to\vv]$ are obtained in Section~\ref{sec2} for the case $\gamma=0$ and in Section~\ref{sec3} for the general case.

We linearise the autonomous equations around the equilibria
 in \S~\ref{subsecLinear} to construct a first return map.
In Section~\ref{sec3}, we begin by presenting a systematic calculation of the first return map for the robust heteroclinic cycle subjected to the periodic forcing function $f (2\omega t)=\sin(2\omega t)$ at the first component of the vector field. 
 The return map for the non-autonomous $F_\gamma$ depends also on the initial time $s$.
Since the forcing term is periodic in time, the natural phase-space may be regarded as $\EU^1\times\RR^3$.
By including the time-dependent terms through all steps in the calculation, we obtain a first return map to the set
$$
In(\vv) =\{(x,y,z), \quad x=\varepsilon,\  |y|<\varepsilon,\ z=1+w,\  |w|<\varepsilon\}
$$
where $\varepsilon >0$ is small.
Comparison with the  Poincar\'e map for the dynamics of the differential equations associated to $F_0$ show that the new return map associated to $F_\gamma$ captures the dynamics well -- see Remarks \ref{RemA}, \ref{RemB} and \ref{RemC}.
The Poincar\'e map for \eqref{general}, described in Theorem \ref{Th1}, yields a description of the dynamics in terms of a two-dimensional map for the $y$-coordinate and the return time $s$ at which trajectories reach the cross-section, as in the following result:

\begin{theorem}
\label{Th1}
In the weakly attracting case  $\delta  \gtrsim 1$, for sufficiently small
$\varepsilon>0$ the rectangle $In(\vv)$
is a cross-section to the flow of \eqref{general}, with $\gamma=0$.

For small  $\gamma>0$, if $(\alpha-\beta)^2<4\alpha$ and $\beta-\alpha\ne -2$ then a solution of \eqref{general}  that starts in $In(\vv)$ at time $s$   returns to $In(\vv)$  with the dynamics, 
dominated by the coordinate $y$, defining a map $G$ on the cylinder
\begin{equation}\label{cylinder}
{\mathcal C}=\{(s,y):\  0<y<\varepsilon,\quad s\in\RR\pmod{\pi/\omega}\},
\end{equation}
that is approximately given by
\begin{equation}
\label{G_general}
G (s,y)=
\left(s-K\,  \ln y, \,\,
y^\delta +\gamma\left(1+k_1\sin(2\omega s)\right)\right)= (g_1(s, y), g_2(s, y)),
\end{equation}
where $K=\dfrac{2\alpha}{(\alpha+\beta)^2}>0$ and
the value of $k_1>0$ is given in Appendix~\ref{Appendix}.
\end{theorem}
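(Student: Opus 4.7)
My plan is to realise $G$ as a composition of four maps: the local passage near $\vv$, a global transition along $[\vv\to\ww]$, the local passage near $\ww$, and a global transition along $[\ww\to\vv]$. I would compute this composition first for the unperturbed system ($\gamma=0$), then track the leading-order effect of the periodic forcing term.

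To begin, I would set up local cross-sections. Because the unstable manifold of $\vv$ in $\EU^2$ lies in $Fix(\kappa_2)\cap\EU^2$, the $y$-direction is expanding at $\vv$ with rate $E_\vv=\alpha+\beta$ while $x$ and $w=z-1$ are contracting with rate $C_\vv=\alpha-\beta$. Hence $In(\vv)$ is transverse to the flow of $F_0$ for every sufficiently small $\varepsilon>0$. Choosing an exit section $Out(\vv)=\{y=\varepsilon\}$ and analogous sections at $\ww$, a $C^1$-linearisation near each saddle writes the local passage maps in closed form. For a trajectory entering $In(\vv)$ with $y$-coordinate $y_0$, the transit time to $Out(\vv)$ satisfies $T_\vv=-(\ln y_0)/E_\vv+O(1)$ and the contracting coordinates at exit are of order $y_0^{\widehat\delta}$. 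Composing with the global transition to a neighbourhood of $\ww$ and the analogous passage near $\ww$ gives, when $\gamma=0$, the returned $y$-coordinate $y_0\mapsto(y_0^{\widehat\delta})^{\widehat\delta}=y_0^\delta$; summing the two logarithmic transit times produces total time $-\ln y_0\,\bigl(1/E_\vv+\widehat\delta/E_\ww\bigr)+O(1)=-K\ln y_0+O(1)$ with $K=2\alpha/(\alpha+\beta)^2$, proving the theorem at $\gamma=0$.

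To handle $\gamma>0$ I would linearise the non-autonomous system near each saddle. Near $\vv$ the $x$-equation reduces to $\dot x=-C_\vv x+\gamma\sin(2\omega t)+O(\gamma\varepsilon)+O(|x|^2)$, which I would solve by variation of constants to obtain a particular solution of the form $A\sin(2\omega t)+B\cos(2\omega t)$; the conditions $(\alpha-\beta)^2<4\alpha$ and $\beta-\alpha\ne -2$ guarantee that the linear operators governing the coupled $x,w$ variables (whose characteristic polynomial involves $C_\vv$ and $2\omega$) are invertible, so that $A$ and $B$ are bounded and the particular solution is well-defined. Integrating this perturbation up to the exit time $T_\vv$ and convecting it through the remaining local map at $\ww$ and the two global transitions produces, at leading order in $\gamma$, an additive term of the form $\gamma\bigl(1+k_1\sin(2\omega s)\bigr)$ in the returned $y$-coordinate, where $s$ is the entry time; the explicit value of $k_1$ emerges from the integrals recorded in Appendix~\ref{Appendix}. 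The main obstacle is the careful bookkeeping of this first-order correction as it is transported through four successive maps: one must verify that its dominant contribution has the stated sinusoidal dependence on $s$ (with the constant $1$ arising from a non-vanishing integral over the transit interval) and that all remainder terms of order higher than $\gamma$ or $y^\delta$ can be uniformly controlled on the cross-section.
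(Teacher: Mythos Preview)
Your overall strategy matches the paper's: compose local passages near $\vv$ and $\ww$ (computed via $C^1$-linearisation and variation of constants) with global transitions taken as the identity, then truncate to first order in $\gamma$. The unperturbed computation you sketch is correct and agrees with the paper's Section~\ref{sec2}.

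However, you have misidentified where the two hypotheses enter, and this matters for the argument. The condition $\beta-\alpha\ne -2$ is a \emph{non-resonance} condition on the eigenvalues at $\vv$ and $\ww$ (which are $\beta-\alpha$, $\alpha+\beta$ and $-2$), required for Samovol's $C^1$-linearisation of the autonomous field; it has nothing to do with $\omega$ or with invertibility of a forced operator. The particular solution of $\dot x=(\beta-\alpha) x-\gamma\sin(2\omega t)$ is always well-defined because $\beta-\alpha$ is real, so no hypothesis is needed there, and in the linearisation the $x$ and $w$ variables are \emph{decoupled}, so your reference to ``coupled $x,w$ variables'' is spurious. The condition $(\alpha-\beta)^2<4\alpha$ enters only at the very end: after composing the four maps one obtains a three-dimensional return map $(s,y,w)\mapsto\widetilde G(s,y,w)$ whose $y$-component behaves like $y^\delta$ and whose $w$-component behaves like $w\,y^{4\alpha/(\alpha+\beta)^2}$; the inequality $(\alpha-\beta)^2<4\alpha$ says precisely that $\delta<4\alpha/(\alpha+\beta)^2$, so the $w$-coordinate contracts faster and can be discarded, reducing the map to the two-dimensional $G$ on the cylinder. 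Your proposal never carries out this reduction step, which is what the hypothesis is for.
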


Theorem \ref{Th1} is proved in Section~\ref{sec3}. 
Although this result is valid for  the weakly attracting case, we may study the dynamics of $G$
 for all $\delta>1$.
  The expression of $G$ coincides with that obtained by Tsai and Dawes \cite{TD1} for a different system,  here we clarify and extend their results,  that are also discussed in Sections~\ref{secOtherAuthors} and \ref{secDiscussion}.

Section~\ref{sec 5} is concerned with bifurcation and stability of periodic solutions of \eqref{general}.
For this, we first introduce an auxiliary parameter $T$ and look for fixed points of $\GG(s,y)=G(s,y)-(T,0)$.
A first step is the following:

\begin{figure}[ht]
\begin{center}
\includegraphics[width=8cm]{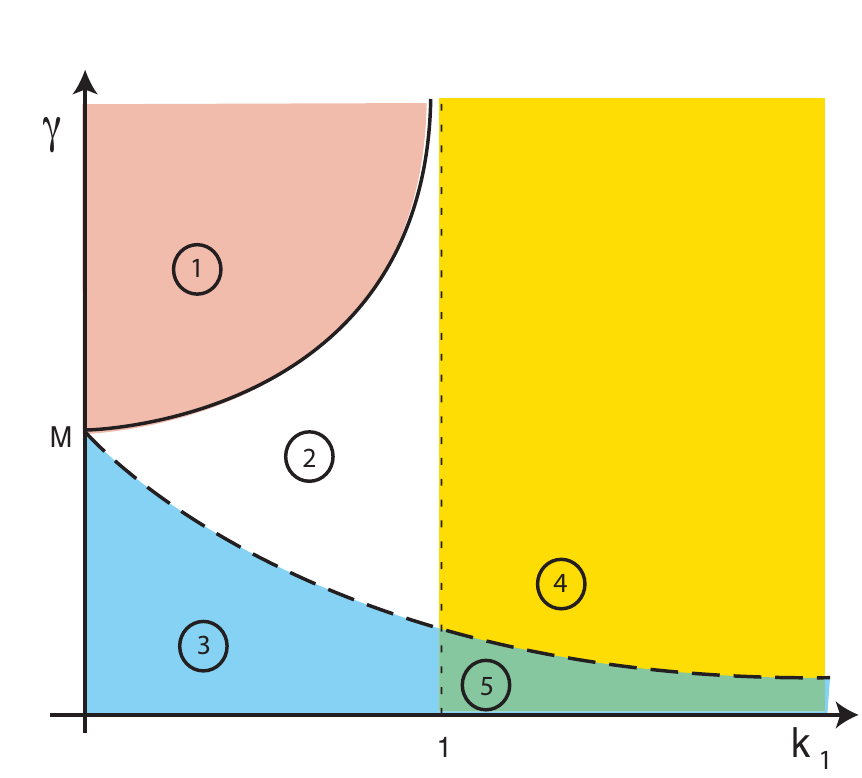}
\end{center}
\caption{\small Bifurcation diagram on $(k_1,\gamma)$-plane for  the fixed points of $\GG(s,y)$: the solid line is $\gamma=M/(1-k_1)$, the dashed line is $\gamma=M/(1+k_1)$. Numbering corresponds to  the cases in Theorem~\ref{teoremaPeriodicasL}.}
\label{Bif_diag1}
\end{figure}

\begin{theorem}
\label{teoremaPeriodicasL}
 Consider the problem of finding fixed points of $\GG(s,y)$ with $T\ge 0$, $k_1>0$, $\gamma>0$.
The curves $k_1= 1$ and $\gamma(1-k_1)= M$ and $\gamma(1+k_1)= M$ with $M=\delta^{\frac{1}{1-\delta}}-\delta^{\frac{\delta}{1-\delta}}$
separate the first quadrant of the $(k_1,\gamma)$-plane into five regions (see Figure~\ref{Bif_diag1}) corresponding to the following behaviour:
\begin{itemize}
\item[(1)] $k_1 \in (0,1)$ and $M< \gamma(1-k_1)$ --- there are no fixed points;
\item[(2)] $k_1 \in (0,1)$ and $\gamma(1-k_1)<M<\gamma(1+k_1)$ --- there are 
 fixed points for each $T\in[T_1,T_2]$;
\item[(3)] $k_1 \in (0,1)$ and $\gamma({1+k_1})<M$ --- there are 
fixed points for each $T\in[T_1,T_2]\cup[T_3,T_4]$;
\item[(4)] $k_1 >1$ and $M<\gamma({1+k_1})$ --- there are 
fixed points for each $T\in\RR^+$;
\item[(5)] $k_1 >1$ and $\gamma({1+k_1})<M$ --- there are 
 fixed points for each $T\in(0,T_1]\cup[T_2,\infty)$;
\end{itemize}
where $0<T_1<T_2<T_3<T_4$ are real numbers. 
Moreover, when  fixed points  exist for $T$ in an interval, then there are two  fixed points for each $T$ in the interior of the interval  and only one  for $T$ in the boundary. 
These qualitative features occur for every  $\omega>0$ and only the initial time $s$ depends on $\omega$.
\end{theorem}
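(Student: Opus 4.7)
The plan is to decouple the two components of the fixed-point equation $\GG(s,y) = (s,y)$ and reduce the problem to a one-variable analysis of the unimodal function $h(y) := y - y^\delta$. On the cylinder the fixed-point equation reads
\begin{equation*}
-K\ln y \equiv T \pmod{\pi/\omega}, \qquad h(y) = \gamma\bigl(1 + k_1 \sin(2\omega s)\bigr).
\end{equation*}
I would treat the first equation as specifying $T = -K\ln y$ on its principal branch, giving a strictly decreasing bijection between $y\in(0,1)$ and $T\in(0,\infty)$; the other branches merely translate $s$ by a multiple of $\pi/\omega$, and this is what will eventually produce the final assertion that only $s$ depends on $\omega$. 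The second equation is solvable in $s$ iff $\zeta := (h(y)/\gamma - 1)/k_1 \in [-1,1]$, in which case it has two solutions $s\in\RR/(\pi/\omega)\ZZ$ except at $\zeta = \pm 1$, where it has only one.

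A brief calculation of the shape of $h$ is now in order. Since $\delta > 1$, $h'(y) = 1 - \delta y^{\delta-1}$ has the unique positive root $y^* = \delta^{1/(1-\delta)}$, at which
\begin{equation*}
h(y^*) = \delta^{1/(1-\delta)} - \delta^{\delta/(1-\delta)} = M.
\end{equation*}
With $h(0) = h(1) = 0$, this makes $h$ unimodal on $[0,1]$ with range $[0, M]$; in particular every $c \in (0, M)$ is attained at exactly two points straddling $y^*$.

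Next I would read off the five cases from the five geometric configurations of the horizontal strip $[a, b]$ relative to $[0, M]$, where $a := \max(0, \gamma(1-k_1))$ and $b := \gamma(1+k_1)$: case (1) ($a > M$) gives the empty set; case (2) ($a > 0$, $a < M < b$) a single preimage interval containing $y^*$; case (3) ($a > 0$, $b < M$) two interior intervals $[y_1,y_3]\cup[y_4,y_2]$ straddling $y^*$; case (4) ($a = 0$, $b > M$) the whole interval $(0,1)$; case (5) ($a = 0$, $b < M$) two boundary intervals $(0,y_3]\cup[y_4,1)$. The strict inequalities separating these regions translate into the curves $k_1 = 1$, $\gamma(1-k_1) = M$, and $\gamma(1+k_1) = M$ in the $(k_1,\gamma)$-plane, exactly as in Figure~\ref{Bif_diag1}. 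Applying the monotone-decreasing change of variables $y\mapsto -K\ln y$ to each preimage set produces exactly the $T$-sets listed in the theorem, with the labelling $T_1 < T_2 < T_3 < T_4$ forced by $0 < y_1 < y_3 < y^* < y_4 < y_2 < 1$.

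Finally the bifurcation count: at $y$ interior to the admissible set $|\zeta| < 1$ strictly, so there are two $s$-solutions per period and hence two fixed points; at an endpoint $\zeta = \pm 1$ and only one $s$ survives, producing the tangent-bifurcation count of the theorem. Since $K$, $M$, $\delta$, $k_1$, $\gamma$ are all independent of $\omega$, the partition of parameter space and the $T$-intervals themselves are $\omega$-independent; only the horizontal locations of the $s$-preimages (through $s \mapsto 2\omega s$) feel $\omega$, giving the last assertion. The one mildly delicate point I anticipate is the bookkeeping of the $y_i$-labelling across cases and the monotone correspondence with the $T_j$-labelling; beyond this the whole argument is a routine level-set analysis of a smooth unimodal function.
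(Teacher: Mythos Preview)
Your proposal is correct and follows essentially the same route as the paper: the paper substitutes $y=e^{-KT}$ first and analyses the unimodal function $F(T)=e^{-KT}-e^{-\delta KT}$ against the range $[\gamma(1-k_1),\gamma(1+k_1)]$ of $\phi_\omega(s)$, while you work directly with $h(y)=y-y^{\delta}$ and convert to $T$ at the end via the monotone change $T=-K\ln y$. Since $F(T)=h(e^{-KT})$, the two level-set analyses are identical up to this reparametrisation; the only superfluous remark is your reference to ``other branches'' of the first equation, because in the paper's setup $T$ is a free real parameter (not taken modulo $\pi/\omega$), so $T=-K\ln y$ holds exactly rather than as a congruence.
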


In Section~\ref{sec 5}, after  the  fixed points of $\GG(s,y)$ have been found,
we discuss their stability and  bifurcations between the different regions of the parameter space $(T,k_1, \gamma)$. 

\begin{corollary}
\label{Cor_D}
For $k_1>0$,  $\gamma>0$, $\gamma= M/(1\pm k_1)$,  fixed points of $\GG(s,y)$
undergo saddle-node bifurcations on the  surfaces in the three dimensional parameter space $(T,k_1,\gamma)$ given by
$$
e^{-KT_j}-e^{-\delta KT_j}=\gamma(1+ k_1) \quad\forall k_1\quad\mbox{and if }\  k_1>1\mbox{ then also}\quad
e^{-KT_j}-e^{-\delta KT_j}=\gamma(1-k_1).
$$
\end{corollary}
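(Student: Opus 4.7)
The plan is to combine the fixed-point characterisation of Theorem~\ref{teoremaPeriodicasL} with a direct linearisation of $\GG$ at a fixed point, and then read off the locus where the standard saddle-node degeneracy condition holds.

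First, from (the proof of) Theorem~\ref{teoremaPeriodicasL}, the fixed points $(s,y)$ of $\GG(s,y)=G(s,y)-(T,0)$ satisfy, on a chosen branch,
\[
y=e^{-KT},\qquad H(T):=e^{-KT}-e^{-\delta K T}=\gamma\bigl(1+k_1\sin(2\omega s)\bigr),
\]
where $H$ is unimodal on $[0,\infty)$ with maximum $M=\delta^{1/(1-\delta)}-\delta^{\delta/(1-\delta)}$ attained at a unique critical $T^{\ast}$. For fixed $(T,k_1,\gamma)$ the fixed points are thus indexed by the solutions $s$ in $\RR\pmod{\pi/\omega}$ of $\sin(2\omega s)=(H(T)-\gamma)/(\gamma k_1)$, generically two per period.

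Next, I would compute the Jacobian of $\GG$ at such a fixed point:
\[
D\GG(s,y)=\begin{pmatrix}1 & -K/y\\ 2\omega\gamma k_1\cos(2\omega s) & \delta\, y^{\delta-1}\end{pmatrix},\qquad \det\bigl(D\GG-I\bigr)=\frac{2\omega K\gamma k_1}{y}\cos(2\omega s).
\]
Hence $D\GG$ has an eigenvalue equal to $1$ precisely when $\cos(2\omega s)=0$, i.e. $\sin(2\omega s)=\pm 1$. Substituting these two extremal values into the height equation yields the two candidate saddle-node loci in $(T,k_1,\gamma)$-space,
\[
H(T)=\gamma(1+k_1)\qquad\text{and}\qquad H(T)=\gamma(1-k_1),
\]
as claimed.

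To upgrade the eigenvalue-$1$ condition to a genuine saddle-node, I would verify the two usual non-degeneracy conditions. Parameter transversality is immediate because $\partial_\gamma[\gamma(1+k_1\sin(2\omega s))]=1\pm k_1\neq 0$ on each locus, so the two equations cut out smooth codimension-one surfaces on which one can sweep $T$ by adjusting $\gamma$. The quadratic fold coefficient is controlled by $\partial^{2}_{ss}\sin(2\omega s)=\mp 4\omega^{2}\neq 0$ at $\sin(2\omega s)=\pm 1$, so two $s$-solutions merge quadratically and disappear as the locus is crossed---the canonical picture of a fold. Finally, the surfaces $H(T)=\gamma(1\pm k_1)$ are tangent to the critical level $H(T)=M$ exactly along $\gamma(1\pm k_1)=M$, which recovers the boundary curves of Figure~\ref{Bif_diag1} and anchors the corollary to Theorem~\ref{teoremaPeriodicasL}.

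The main obstacle I expect is the combinatorial bookkeeping: tracking which pair of fixed points (among the two $s$-branches and the two $T$-branches on either side of $T^{\ast}$) collides at each surface, and determining the parameter regimes over which each bifurcation surface is actually realised in view of the positivity constraint $\gamma(1+k_1\sin(2\omega s))>0$ forced by $\delta>1$ and $y\in(0,1)$. Matching these collisions case by case against items (2)--(5) of Theorem~\ref{teoremaPeriodicasL} is what makes the statement precise in the regime $k_1>1$.
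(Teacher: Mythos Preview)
Your proposal is correct and follows essentially the same route as the paper: the saddle-node loci are identified in the proof of Theorem~\ref{teoremaPeriodicasL} as the tangency points $\sin(2\omega s)=\pm 1$ of the horizontal line $F(T)$ with the graph of $\phi_\omega$, and the eigenvalue-$1$ condition is confirmed by the same triangular Jacobian computation that appears in the proof of Proposition~\ref{teoremaBT}. Your explicit verification of the fold non-degeneracy conditions (quadratic coefficient and parameter transversality) is a useful addition that the paper leaves implicit in its geometric picture; note only that the paper's $(1,2)$ Jacobian entry is $-1/(Ky)$ rather than your $-K/y$, in line with the convention of \eqref{eqPeriodicSolutions}, though this does not affect the vanishing of $\det(D\GG-I)$.
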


In this context, we find in \S~\ref{secBifurcations} an organising centre for the dynamics of $\GG$ as follows:

\begin{theorem}
\label{BT_L}
 There are curves in the three dimensional parameter space $(T,k_1,\gamma)$ where  fixed points of $\GG(s,y)$
undergo a discrete-time Bogdanov-Takens bifurcation,
a single curve for $k_1>1$, two curves for $0<k_1<1$.
The points in the curves occur at values of  $T=T_N$, $N=1,\ldots 4$ in Theorem~\ref{teoremaPeriodicasL} such that  $T_N=T_M=\dfrac{\ln\delta}{K(\delta-1)}$.
\end{theorem}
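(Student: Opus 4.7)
The plan is to exploit the characterisation of a generic discrete-time Bogdanov--Takens point (the 1:1 resonance) as one at which the Jacobian at the fixed point has a double eigenvalue $+1$ forming a single Jordan block. Since $\GG(s,y)=G(s,y)-(T,0)$, we have $D\GG=DG$, and from Theorem~\ref{Th1} I would first compute
$$
DG(s,y)=\begin{pmatrix} 1 & -K/y\\ 2\omega\gamma k_1\cos(2\omega s) & \delta y^{\delta-1}\end{pmatrix},
$$
so that $\tr(DG)=1+\delta y^{\delta-1}$ and $\det(DG)=\delta y^{\delta-1}+2K\omega\gamma k_1\cos(2\omega s)/y$. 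Next I would impose the necessary eigenvalue conditions $\tr(DG)=2$ and $\det(DG)=1$ at a fixed point $(s^*,y^*)$. The trace equation forces $\delta (y^*)^{\delta-1}=1$, i.e.\ $y^*=y_c:=\delta^{1/(1-\delta)}$, which is precisely the unique critical point of the auxiliary function $h(y)=y-y^\delta$ (satisfying $h(y_c)=M$) already exploited in the proof of Theorem~\ref{teoremaPeriodicasL}. At $y^*=y_c$ the determinant equation collapses to $\cos(2\omega s^*)=0$, giving $\sin(2\omega s^*)=\pm 1$.

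I would then substitute these values into the fixed-point equations for $\GG$ to locate the BT set in parameter space. The second-coordinate equation $y^*-(y^*)^\delta=\gamma\bigl(1+k_1\sin(2\omega s^*)\bigr)$ becomes $M=\gamma(1\pm k_1)$, which is exactly the equation of the two saddle-node surfaces of Corollary~\ref{Cor_D}. Because $M,\gamma>0$, the minus sign is admissible only for $k_1<1$, which accounts for the two BT curves in the slab $0<k_1<1$ and the single one in $k_1>1$. The first-coordinate equation forces $T\equiv -K\ln y_c\pmod{\pi/\omega}$, pinning down the common value $T_M$ of the statement; since the resulting $T$'s simultaneously solve $e^{-KT}-e^{-\delta KT}=\gamma(1\pm k_1)$ from Corollary~\ref{Cor_D}, they are precisely saddle-node values $T_N$ from Theorem~\ref{teoremaPeriodicasL} that have merged, so $T_N=T_M$.

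The final step is to confirm that each such point is a genuine Bogdanov--Takens bifurcation rather than a higher-order degeneracy. The Jordan-block structure is immediate from $-K/y_c\neq 0$: the matrix $DG-I$ has rank one, giving algebraic multiplicity two and geometric multiplicity one for the eigenvalue $+1$. The hard part will be verifying the quadratic non-degeneracy and two-parameter transversality conditions of the BT normal form. Following the standard 1:1 resonance reduction, these translate into the non-vanishing of two explicit polynomials in the second partial derivatives of $G$ at $(s^*,y_c)$ and the linear independence of the parameter gradients of $\tr(DG)-2$ and $\det(DG)-1$ regarded as functions on the three-dimensional parameter space $(T,k_1,\gamma)$. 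Non-vanishing follows from $\partial^2 g_2/\partial y^2(y_c)=\delta(\delta-1)y_c^{\delta-2}\ne 0$ and $\partial^2 g_2/\partial s^2(s^*)=-4\omega^2\gamma k_1\sin(2\omega s^*)=\mp 4\omega^2\gamma k_1\ne 0$, since $\delta>1$ and $\omega k_1\gamma>0$; transversality reduces to a direct Jacobian-determinant computation in the parameters, using that $T$, $k_1$ and $\gamma$ appear independently in the two defining equations.
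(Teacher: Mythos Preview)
Your core argument coincides with the paper's Proposition~\ref{teoremaBT}: you compute $D\GG=DG$, impose $\tr=2$ and $\det=1$ to locate the double eigenvalue $+1$, verify the Jordan-block condition via the nonzero off-diagonal entry, and arrive at exactly the same locus $\gamma(1\pm k_1)=M$ with $T=T_M$, the sign $\pm$ accounting for one curve when $k_1>1$ and two when $0<k_1<1$. The genuine difference is in the final step. The paper explicitly \emph{declines} to verify the nonlinear non-degeneracy conditions of the discrete Bogdanov--Takens normal form; instead it argues indirectly, by exhibiting the saddle-node surfaces of Corollary~\ref{Cor_D} and a Hopf surface and showing that they emanate from the candidate BT curve in a manner consistent with the versal unfolding of \cite{BRS,Yagasaki}. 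Your direct route via second partials and parameter transversality is in principle more rigorous and more self-contained, but be aware that the actual 1:1-resonance non-degeneracy conditions are specific polynomial combinations of the second partials in adapted (Jordan-basis) coordinates, not simply the separate non-vanishing of $\partial^2 g_2/\partial y^2$ and $\partial^2 g_2/\partial s^2$, so that paragraph would need to be worked out in detail. Two minor slips: the $\pmod{\pi/\omega}$ on $T$ is spurious here since $T$ is a free auxiliary parameter in $\GG$; and your $(1,2)$ Jacobian entry $-K/y$ follows the formula in Theorem~\ref{Th1}, whereas Section~\ref{sec 5} of the paper uses $g_1(s,y)=s-(\ln y)/K$ (an internal inconsistency in the paper), which is what yields $T_M=\ln\delta/(K(\delta-1))$ rather than your $-K\ln y_c$.
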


In particular, we conclude that there are contractible closed $\GG$-invariant curves  on the cylinder  ${\mathcal C}$ arising at Hopf bifurcations and there  exist small regions in parameter space where $\GG$ has 
chaotic and non-hyperbolic dynamics. This is  a consequence of the discrete-time Bogdanov-Takens bifurcation studied by Broer \emph{et al} \cite{BRS} and Yagasaki \cite[\S~3]{Yagasaki}.
The stability of bifurcating solutions is studied in \S~\ref{secStability}.

A fixed point of $\GG(s,y)$ determines a fixed point of $G(s,y)$ in the cylinder $\mathcal{C}$ if and only if $T=n\pi/\omega$ for some integer $n$. 
These fixed points correspond to  periodic solutions of \eqref{general}
whose period is an integer multiple of the period of the forcing term --- \emph{frequency locked} solutions.
In \S~\ref{subSecFL} the results of \S\ref{6.1}--\ref{secStability} are used to find such solutions. 
The findings agree
well with numerics presented in \cite{TD2, TD1}. 
We show their existence for different values of $\omega$, as well as the existence of invariant tori and chaotic regions.
We also show  that there is no gain in looking for different multiples $n\pi/\omega$, $n\in\NN$ of the period, because we obtain essentially the same solution for all $n$.
This is summarised as follows:

\begin{theorem}
\label{ThF_L}
If $\gamma(1-k_1)<M$,  then there are two frequency locked solutions of  \eqref{general}  with period $n\pi/\omega$, $n\in\NN$, for the following values of $\omega$, according to the regions in Theorem~\ref{teoremaPeriodicasL}:
\begin{itemize}
\item[(2)] $\dpt\omega\in\left({n\pi}/{T_2},{n\pi}/{T_1}\right)$;
\item[(3)] $\dpt\omega\in\left({n\pi}/{T_2},{n\pi}/{T_1}\right)$ and $\dpt\omega\in\left({n\pi}/{T_4},{n\pi}/{T_3}\right)$;
\item[(5)] $\dpt\omega<{n\pi}/{T_2}$  and $\dpt\omega>{n\pi}/{T_1}$;
\item[(4)] all $\omega>0$;
\end{itemize}
 where the $T_j$ for $j=1,\ldots,4$ have the values of Theorem~\ref{teoremaPeriodicasL}.
 
There are  values of $(k_1,\gamma)$ and values $\omega_{H_1}<\omega_{H_2}$ and $\omega_{h_1}<\omega_{h_2}$ such that, for each $n\in\NN$:
\begin{itemize}
\item for $\omega\in (n\omega_{H_1},n\omega_{H_2})$ there is a $G$-invariant curve on the cylinder   ${\mathcal C}$ that corresponds to a frequency locked invariant torus for \eqref{general};
\item for $\omega\in (n\omega_{h_1},n\omega_{h_2})$ there is a $G$-invariant  set with dynamics conjugate to a shift on a finite number of symbols, and hence there is a frequency locked suspended horseshoe for \eqref{general}.
\end{itemize}

Moreover, $(s_1,y)$ is a  fixed point of $G$ in the cylinder, corresponding to a periodic solution of 
\eqref{general} with period $\pi/\omega$  if and only if
$\left(\frac{s_1}{n},y\right)$ is a fixed point of $G$ in the cylinder, corresponding to a periodic solution of 
\eqref{general} with period $\pi/n\omega$ for arbitrary $n\in \NN$.
\end{theorem}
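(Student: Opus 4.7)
The plan is to combine the fixed-point inventory of Theorem~\ref{teoremaPeriodicasL} with the bifurcation analysis of Theorem~\ref{BT_L}, translating every condition on the auxiliary parameter $T$ into a condition on $\omega$ by means of the identity $T=n\pi/\omega$. This identity is precisely the condition under which a fixed point of $\GG(s,y)=G(s,y)-(T,0)$ descends to a fixed point of $G$ on the cylinder $\mathcal{C}$ of \eqref{cylinder}, since the points $s$ and $s+T$ are identified in $\mathcal{C}$ exactly when $T\in(\pi/\omega)\ZZ$.

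I will first treat the frequency locked fixed points. The existence intervals $[T_1,T_2]$, $[T_3,T_4]$ and $(0,T_1]\cup[T_2,\infty)$ of Theorem~\ref{teoremaPeriodicasL} convert, through $\omega=n\pi/T$, into the $\omega$-intervals listed in cases (2), (3) and (5); the count of \emph{two} fixed points per interior value of $T$ is inherited from the same theorem together with Corollary~\ref{Cor_D}. Case (4) is immediate because every $T>0$, hence every $\omega>0$, is admissible there.

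For the invariant tori and horseshoes I will exploit the Bogdanov--Takens points of $\GG$ located by Theorem~\ref{BT_L}. The standard unfolding of the discrete-time Bogdanov--Takens bifurcation (Broer \emph{et al.}~\cite{BRS}, Yagasaki~\cite[\S 3]{Yagasaki}) supplies, in an open neighbourhood of each such point in $(T,k_1,\gamma)$-space, a Neimark--Sacker tongue carrying a closed curve invariant under $\GG$, together with a Shilnikov-type wedge on which $\GG$ admits a hyperbolic invariant set topologically conjugate to a shift on finitely many symbols. Intersecting these open regions with the surface $T=n\pi/\omega$ produces open intervals of $\omega$ whose endpoints scale linearly with $n$, and suspending the resulting invariant sets through the time-periodic flow of \eqref{general} yields the frequency locked invariant two-torus and the suspended horseshoe, respectively. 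The main obstacle I expect lies here: to legitimately invoke \cite{BRS,Yagasaki}, I must compute the second- and third-order normal-form coefficients of $D\GG$ at the Bogdanov--Takens point from the explicit formula \eqref{G_general} and verify their non-degeneracy, thereby certifying that both the Neimark--Sacker tongue and the shift-like wedge are genuinely open regions in $(T,k_1,\gamma)$-space rather than artefacts of the codimension-two singularity.

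The closing scaling statement I will prove by direct substitution in \eqref{G_general}. Writing $s=s_1/n$ and replacing $\omega$ by $n\omega$ in \eqref{G_general}, the identity $\sin(2n\omega s)=\sin(2\omega s_1)$ preserves the $y$-fixed-point equation, while the return-time condition $-K\ln y\in(\pi/\omega)\NN$ automatically becomes $-K\ln y\in(\pi/n\omega)\NN$; reversing the substitution gives the converse implication, and the proof is complete.
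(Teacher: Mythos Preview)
Your proposal is correct and follows the same three-step decomposition as the paper: translate the $T$-intervals of Theorem~\ref{teoremaPeriodicasL} into $\omega$-intervals via $T=n\pi/\omega$; read off the invariant curve and horseshoe from the Bogdanov--Takens unfolding near the points of Theorem~\ref{BT_L} (this is the paper's Proposition~\ref{corollaryInvariantCurve}); and prove the scaling statement by direct substitution (this is the paper's Proposition~\ref{propMultipleFrequencies}).

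One remark on the ``main obstacle'' you flag: the paper does \emph{not} carry out the computation of the higher-order normal-form coefficients at the Bogdanov--Takens point. In \S\ref{secBifurcations} it explicitly says that, instead of verifying the nonlinear non-degeneracy conditions required by \cite{BRS,Yagasaki}, it only checks that the linear conditions for the adjacent codimension-one bifurcations (saddle-node and Hopf) arise consistently with the versal unfolding, and then states Proposition~\ref{corollaryInvariantCurve} in conditional form (``For the values of $(k_1,\gamma)$ and of $T$ where there is a closed curve\ldots''). So the verification you propose would in fact go beyond what the paper does; you may either carry it out or adopt the paper's shortcut and phrase the torus/horseshoe conclusion conditionally.
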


The Bogdanov-Takens bifurcation of Theorem~\ref{BT_L} only occurs for solutions of the differential equation \eqref{general} for values 
$\omega=\dfrac{n\pi}{T_M}=\dfrac{n\pi K(\delta -1)}{\ln \delta}$ with $n\in\NN$.
However, the dynamical structures bifurcating from it are still present for nearby values of $\omega$.
Thus the values $\omega={n\pi}/{T_M}$ act as \emph{organising centres} for the dynamics --- they explain the onset of chaos and of quasiperiodic behaviour associated to invariant tori.

Dynamics similar to what we have described is expected to occur generically near periodically forced robust weakly attracting heteroclinic cycles. This is why the result of our computations may be applied to other similar cases. 
Previous results by Afraimovich \emph{et al} \cite{AH2002} and by Tsai and Dawes \cite{TD2,TD1}
deal with similar but not identical systems.
A summary of their results when applied to our case is given in Section~\ref{secOtherAuthors}.

We finish the article with a 
discussion in Section~\ref{secDiscussion} of the consequences of our findings, both  for the map $G(s,y)$ and for the equation \eqref{general}, and their relation to results by other authors.

\section{Time-independent first return map}
\label{sec2}
We will define four cross-sections
transverse to all trajectories in a neighbourhood of  $\Sigma_0$. 
 Repeated intersections  with
 a fixed cross-section define a return map from the section to itself; studying the dynamics of this map enables us to understand the dynamics of trajectories near $\Sigma_0$. 
 
 We  construct the return map as the composition of two types of map: \emph{local maps} within neighbourhoods of the saddle-type equilibrium points where the dynamics can be well approximated by the flow of the linearised equations, and transition maps from one neighbourhood to another (also called \emph{global maps}).  Near the equilibrium $\vv$, the cross-sections are denoted $In(\vv)$ and $Out(\vv)$,  
 with a similar notation around $\ww$.

\subsection{Linearisation}\label{subsecLinear}
By Samovol's Theorem \cite{Samovol}, around the saddles $\vv$ and $\ww$, the vector field $F_0$ is $C^1$--conjugate to its linear part, since there are no resonances of order 1  (recall that we are taking $\beta-\alpha\ne -2$).
In local coordinates $(x, y,w)$ the linearisation of \eqref{general} with $\gamma=0$, at $\vv $ and $\ww $ takes the form
\begin{equation}\label{linvUnperturbed}
\left\{ 
\begin{array}{l}
\dot x = (\beta-\alpha) x \\ 
\dot y =  (\alpha+\beta) y\\ 
\dot{w}= -2w
\end{array}
\right.
\end{equation}
where $w=z-1$
near $\vv$ and $w=z+1$
 near $\ww$.
  At the points $\vv$ and $\ww$, the direction $w$ corresponds to the radius of the attracting sphere $\EU^2$, we will call this direction \emph{radial} (see Figure~\ref{vizinhancas}).

\subsection{The cross-sections}
\label{cross_sections}
Consider cubic neighbourhoods $V$ and $W$  in $\RR^3$  of $\vv$ and $\ww$, respectively, given in the coordinates of \eqref {linvUnperturbed} by (see Figure~\ref{vizinhancas}):
$$
\{(x, y, w), |x|<\varepsilon,  |y|<\varepsilon, |w|<\varepsilon\}
$$
for $\varepsilon>0$  small.
The boundary of $V$ contains two squares, the top and the bottom of the cube parametrised by $ w= \pm \varepsilon$, 
 where the flow enters $V$  in  the radial direction.
We are concerned with the following subsets of the other faces of the cube:
\begin{itemize}
\item   A set of points where the vector field points inwards to $V$ given by
$$
In(\vv) =\{(\varepsilon, y_1, w_1+1),\  0<y_1<\varepsilon,\ 0<w_1<\varepsilon\},
\quad\mbox{parametrised by}\quad  (y_1, w_1).
$$
\item    A set of points where the vector field points outwards of $V$  given by
$$
Out(\vv) =\{(\hat{x}_1,\varepsilon,  \hat{w}_1+1),\  0<\hat{x}_1<\varepsilon, \  0< \hat{w}_1<\varepsilon\},
\quad\mbox{parametrised by}\quad  (\hat{x}_1, \hat{w}_1).
$$
\end{itemize}
Similarly, the boundary of $W$ contains two squares, the top and the bottom of the cube parametrised by  $ w= \pm \varepsilon$,  where the flow enters $W$  in the radial direction.
The following subsets of the other faces of the cube are of interest here:
\begin{itemize}
\item    A set of points where the vector field points inwards  to $W$, given by
$$
In(\ww) =\{(x_2,\varepsilon, w_2-1),\  0<x_2<\varepsilon,\ 0<w_2<\varepsilon\},
\quad\mbox{parametrised by}\quad  (x_2, w_2).
$$
\item   A set of points where the vector field points outwards of $W$, given by
$$
Out(\ww) =\{(\varepsilon, \hat{y}_2, \hat{w}_2-1),\ 0<\hat{y}_2<\varepsilon, \ 0<\hat{w}_2<\varepsilon\},
\quad\mbox{parametrised by}\quad  (\hat{y}_2, \hat{w}_2).
$$
\end{itemize}

\begin{figure}
\begin{center}
\includegraphics[width=80mm]{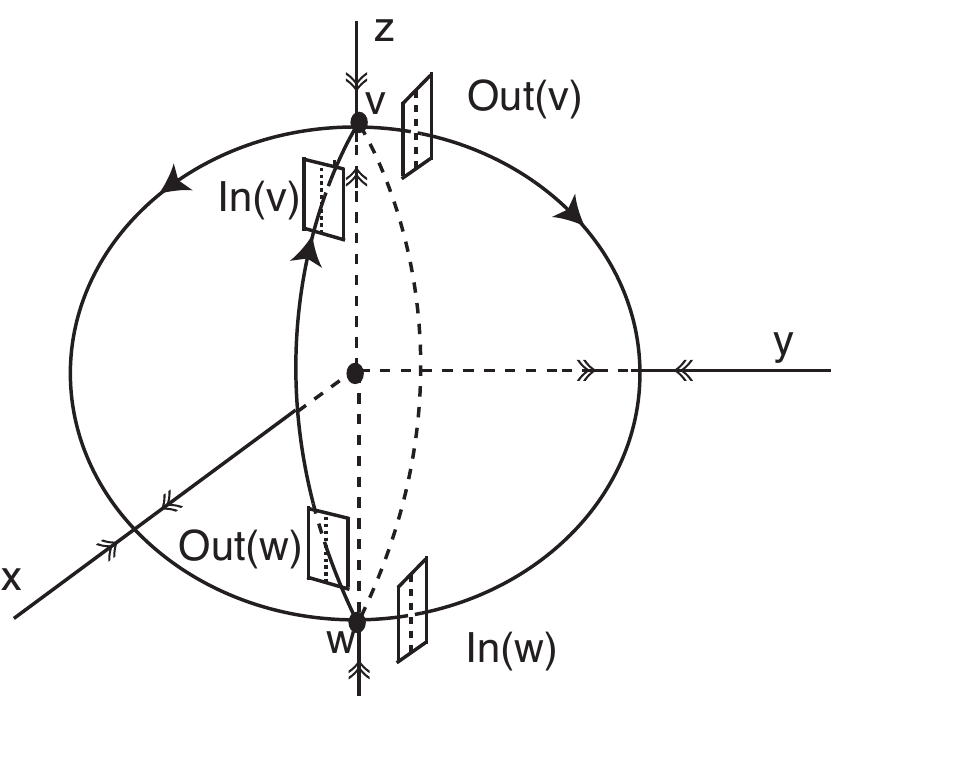}
\end{center}
\caption{\small Trajectories not on $W^s(\vv)$ reach the cross-section $In(\vv)$, go  transversely through it, then pass near $\vv$ and again  go  transversely through the cross-section $Out(\vv)$. After this they continue to a neighbourhood of $\ww$, moving transversely through the cross-sections $In(\ww)$ and $Out(\ww)$ and finally returning to  $In(\vv)$. Double arrows indicate the radial direction.}
\label{vizinhancas}
\end{figure}

\subsection{Local map near $\vv$}

The solution of the linearised system (\ref{linvUnperturbed}) near $\vv$ with initial conditions $y_1$, $w_1$ in $In(\vv)$ is:
$$
\left\{ 
\begin{array}{l}
x(t)= \varepsilon e^{(\beta-\alpha)t} \\ 
y(t)=  y_1 e^{(\alpha+\beta)t}\\ 
w(t)=w_1e^{-2t}
\end{array}
\right.
$$
The time of flight  $T_1$ of a trajectory from $In(\vv)$ to $Out(\vv)$ is the solution of
\begin{equation}
\label{tempo_v_0}
y(T_1)= \varepsilon 
\quad \Leftrightarrow \quad
y_1  e^{(\alpha+\beta)T_1} = \varepsilon 
\quad \Leftrightarrow \quad
 T_1=\frac{1}{\alpha+\beta}\ln\left(\frac{\varepsilon }{y_1 }\right).
\end{equation}
Therefore, the transition map $\Phi_\vv:In(\vv)\to Out(\vv)$ in  coordinates 
$(y_1,w_1)\in In(\vv)$ and $(\hat{x}_1, \hat{w}_1)\in Out(\vv)$ is:
\begin{equation}
\label{Phi_v_0}
\Phi_\vv(y_1, w_1)=
\left(\varepsilon^{1+\frac{\beta-\alpha}{\alpha+\beta}} y_1^{\frac{\alpha-\beta}{\alpha+\beta}},
 w_1\varepsilon^{-\frac{2}{\alpha+\beta}}y_1^{\frac{2}{\alpha+\beta}}\right)
=(\hat{x}_1, \hat{w}_1)
\end{equation}
\subsection{Local map near $\ww$}
The solution of the linearised system \eqref{linvUnperturbed}
 near $\ww$ with initial conditions $x_2$, $w_2$
 in $In(\ww)$  is:
$$
\left\{ 
\begin{array}{l}
x(t)= x_2 e^{(\alpha+\beta)t}\\
y(t)= \varepsilon e^{(\beta-\alpha)t}\\
w(t)=w_2e^{-2t }
\end{array}
\right.
$$
 The time of flight $T_2$ from $In(\ww)$ to $Out(\ww)$ is the solution of $x(T_2)=\varepsilon$:
  $$
 x_2 e^{(\alpha+\beta)(T_2)}=\varepsilon
  \quad\Leftrightarrow\quad
T_2=\frac{1}{\alpha+\beta}\ln\left(\frac{\varepsilon }{x_2}\right)
 $$
in coordinates $(x_2,w_2)\in In(\ww)$ and $(\hat{y}_2,\hat{w}_2)\in Out(\ww)$ is:
\begin{equation}
\label{Phi_w_0}
\Phi_\ww(x_2, w_2)=
\left(\varepsilon^{1+\frac{\beta-\alpha}{\alpha+\beta}} x_2^{\frac{\alpha-\beta}{\alpha+\beta}},
w_2\varepsilon^{-\frac{2}{\alpha+\beta}}x_2^{\frac{2}{\alpha+\beta}}\right)
=(\hat{y}_2,\hat{w}_2)
\end{equation}

\subsection{The global maps}
 In order to obtain the first return map we need the transition maps
$$
 \Psi_{\vv\ww}:  Out(\vv)  \rightarrow  In (\ww)
 \qquad\mbox{and}\qquad
\Psi_{\ww\vv}:  Out(\ww)  \rightarrow  In (\vv) .
$$ 
 An approximation to these maps is to take them as the identity. In coordinates we obtain
$$
 \Psi_{\vv\ww}:(\hat{x}_1, \hat{w}_1) \mapsto       (x_2, w_2) = (\hat{x}_1, \hat{w}_1)
  \qquad\mbox{and}\qquad
\Psi_{\ww\vv}:(\hat{y}_2, \hat{w}_2) \mapsto          (y_1, w_1) =(\hat{y}_2, \hat{w}_2) .
$$
%

\subsection{First return map for  the unperturbed equation}
 \label{secgamma0}
The first return map  $G_0$ to $In(\vv)$ is well defined  at all points  $ (y_1, w_1)\in In(\vv)\backslash W^s_{loc}(\vv)$.
After a linear rescaling of the variables, we may assume that $\varepsilon=1$ and obtain
\begin{equation}
\label{G0}
G_0(y_1, w_1)=\left(y_1^{\delta}, \,\,
w_1y_1^{\frac{4\alpha}{(\alpha+\beta)^2}}
\right) .
\end{equation}
Either the first or the second coordinate dominates, depending on the relative size of the exponents in $y_1$, {\sl i.e.} depending on  the sign of $(\alpha-\beta)^2-4\alpha$, see Figure~\ref{parabola1}.
The transition between the boundaries of $V$ and $W$  occurs in a flow-box, hence the transition time is bounded
above and below.
We assume the transitions far from the equilibria are instantaneous, and then
 the time of the first return of the point $( y_1, w_1)$ with $y_1\neq 0$ is given by
 $T_1 + T_2=-\dfrac{2 \alpha}{(\alpha+\beta)^2} \ln y_1$.
Taking into account the transition times out of $V$ and $W$ would approximately change the value of $T_1+T_2$ by a constant.

\begin{figure}
\begin{center}
\includegraphics[width=8cm]{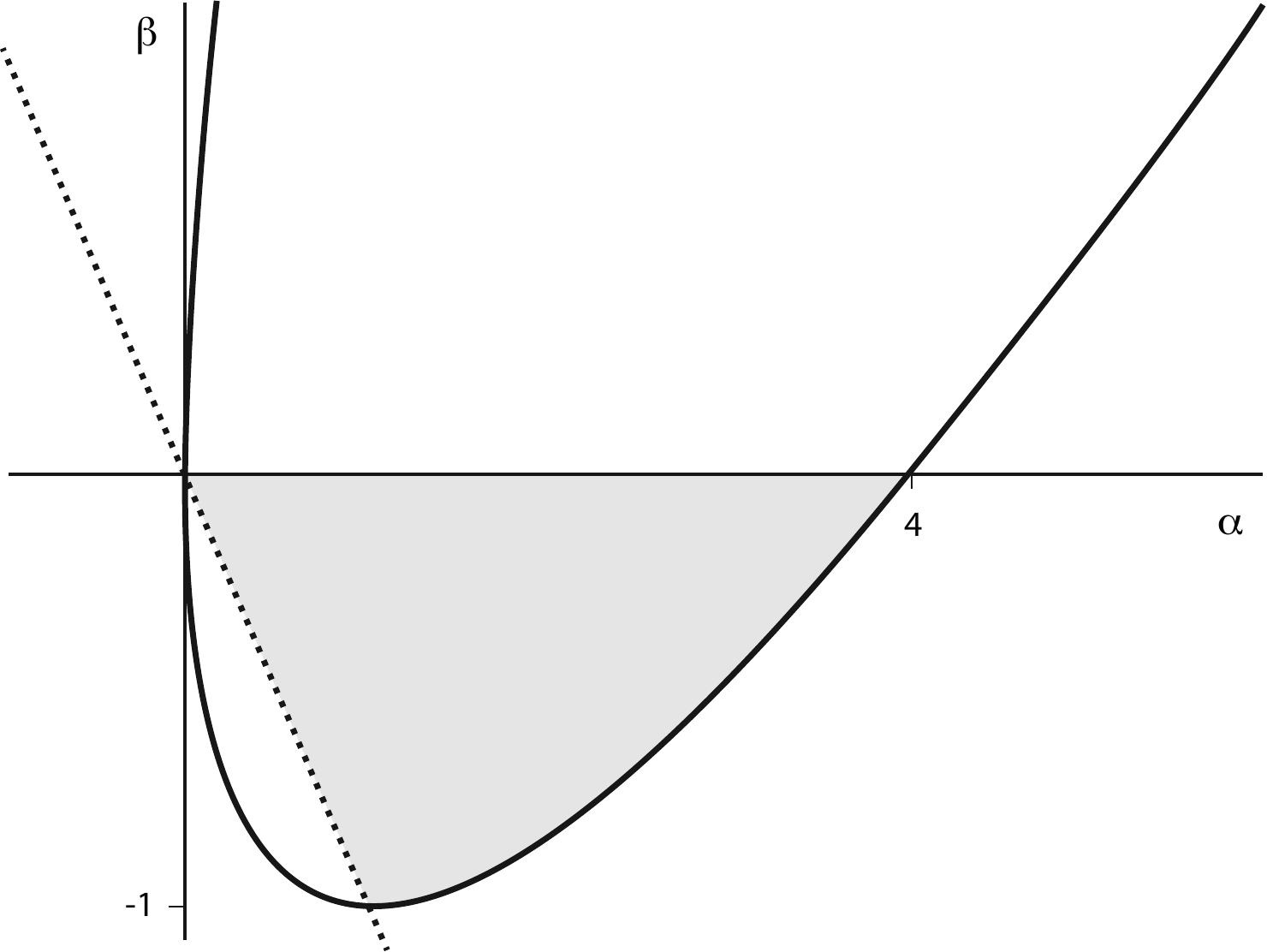}
\end{center}
\caption{\small In the grey region,  $(\alpha-\beta)^2-4\alpha<0$  for 
$-\alpha<\beta<0$,   the exponent of $y$ in the first coordinate of  $G_0(y, z_1)$  is smaller
than in the second coordinate.
}\label{parabola1}
\end{figure}

\section{Time-dependent first return --- proof of Theorem~\ref{Th1}}
\label{sec3}
The aim of this section is to 
obtain  the expression for the first return map to $In(\vv)$ when $\gamma\ne 0$, that will be denoted by $G$. 
  When $\gamma=0$, the map $G$ should coincide with $G_0$ defined in \eqref{G0}. 

The proof of Theorem \ref{Th1} is done by composing local and transition maps.
Because of the time-periodic perturbation, the local linearisation now includes time-dependent terms that are important in the accurate calculation of the local map.  
At each step, we calculate not only the point where a solution hits each cross-section but also the  time  the solution takes to move between cross-sections. 
As in \S~\ref{secgamma0}, the time spent  close to the connections is not taken into account in the calculations, because it
 is small compared to the time spent near the equilibria, especially when $\gamma>0$ is small.
Note that the equilibria for the vector field $F_0$ (associated to the equation \eqref{general} when $\gamma=0$) are no longer equilibria for $F_\gamma$, but the cross-sections remain transverse.

\subsection{Linearization}\label{secLinAut}
The linearisation near $\vv$ and $\ww$ may be written as:
\begin{equation}
\label{linV}
\left\{ 
\begin{array}{l}
\dot x = (\beta-\alpha) x -  \gamma f(2\omega t)\\
\dot y =  (\alpha+\beta) y\\
\dot w=-2w
\end{array}
\right.
\end{equation}
and 
\begin{equation}
\label{linW}
\left\{ 
\begin{array}{l}
\dot x = (\alpha+\beta) x - \gamma f(2\omega t)\\
\dot y =  (\beta-\alpha) y\\
\dot w=-2w
\end{array}
\right.
\end{equation}
respectively,  with $f(2\omega t)=\sin (2\omega t)$.

Each one of equations   \eqref{linV} or \eqref{linW} corresponds to equation \eqref{general} rewritten  in the form 
$$
\dot X=\Ac X+\rho(X)  -\gamma (f(2\omega t), 0,0)\qquad\mbox{for}\qquad X=(x,y,w).
$$
The eigenvalues of the constant matrix $\Ac$ have non-zero  real part.
The perturbation $\gamma (f(2\omega t), 0, 0)$ is  bounded
and the non-linear part $\rho(X)$ is  bounded and uniformly Lipschitz in a compact neighbourhood of $\EU^2$. 
Under these conditions, we may use Palmer's Theorem \cite[pp 754]{Palmer73} to conclude that there is a small neighbourhood of $\vv$ and $\ww$ where the vector field is $C^1$ conjugate to its linear part.  As before, let us label the neighbourhoods by $V$ and $W$, respectively. The terminology for the boundary sections in $V$ and $W$ will be the same as in \S~\ref{cross_sections}.

\begin{remark}
\label{rem1}
If $k, \gamma \in \RR$, according to the Lagrange method of variation of parameters -- see \cite[pp 842]{Shilnikov_book}, the general solution of:
$$
\left\{ 
\begin{array}{l}
\dot x = kx+\gamma g(t)\\
x(s)=x_1
\end{array}
\right.
$$
is 
$$
x(t,s)=x_1 e^{k(t-s)}\Psi(t,s) \qquad \text{where} \qquad \Psi(t) = 1+\frac{\gamma}{x_1} \int_s^t e^{-k(\tau-s)}g(\tau)d\tau.
$$
\end{remark}

\subsection{Local map near $\vv$}
\label{sec_v}
By Remark \ref{rem1}, the solution of the linearised system \eqref{linV} near $\vv$ is:
\begin{equation}
\label{lin_solu_v}
\left\{ 
\begin{array}{l}
x(t,s)= \varepsilon e^{(\beta-\alpha)(t-s)}\left(1-\frac{\gamma}{\varepsilon}\int_s^t e^{-(\beta-\alpha)(\tau-s)} f(2\omega \tau) d\tau \right) \\
y(t,s)= y_1 e^{(\alpha+\beta)(t-s)}\\
w(t,s)= w_1e^{-2(t-s) }
\end{array}
\right.
\end{equation}
where $y_1, w_1$ are the initial conditions in $In(\vv)$. 
The time of flight is the solution of $y(T_1)=\varepsilon$:
$$
y(T_1)= \varepsilon \Leftrightarrow y_1 e^{(\alpha+\beta)(T_1-s)} = \varepsilon  \Leftrightarrow \ln\left(\frac{\varepsilon }{y_1}\right) = (\alpha+\beta)(T_1-s) .
$$
Therefore, the arrival time depends on $s$ and it is given by:
\begin{equation}
\label{T_v}
T_1= s+\ln \left( \frac{\varepsilon}{y_1}\right)^\frac{1}{\alpha+\beta}= s +\frac{1}{\alpha+\beta} \ln \left( \frac{\varepsilon}{y_1}\right) .
\end{equation}
In particular, we may write:
$$
w(T_1,s)= w_1 \left( \frac{\varepsilon}{y_1}\right)^{\frac{-2}{\alpha+\beta}}
$$
Replacing $t$ by  $T_1$ of (\ref{T_v}) in the first equation of (\ref{lin_solu_v}), we get:
$$
x(T_1,s)= \varepsilon \left(\frac{\varepsilon}{y_1}\right)^{-\widehat\delta } \left(1-\frac{\gamma}{\varepsilon}\int_s^{T_1} e^{-(\beta-\alpha)(\tau-s)}  f(2\omega \tau) d\tau \right)
$$
where $\widehat\delta = \frac{C_\vv}{E_\vv}
=\frac{\alpha-\beta}{\alpha+\beta}$
as defined in \S~\ref{gamma=0}.
Therefore, we may write: 
$$
\Phi_\vv(s,y_1, w_1)  =
\left( \begin{array}{l}   s +\frac{1}{\alpha+ \beta} \ln \left( \frac{\varepsilon}{y_1}\right) \\
\varepsilon^{1-\widehat\delta } y_1^{\widehat\delta } 
\left(1-\frac{\gamma}{\varepsilon}\int_s^{T_1} e^{-(\beta-\alpha)(\tau-s)}  f(2\omega \tau)  d\tau \right)\\ 
 w_1 \left( \frac{\varepsilon}{y_1}\right)^{\frac{-2}{\alpha+\beta}}
\end{array}\right)
 =(T_1, \hat{x}_1, \hat{w}_1) .
$$
\begin{remark}
\label{RemA}
Note that when $\varepsilon=1$ and $\gamma=0$, the first, second and third components coincide with formulas (\ref{tempo_v_0})
and the second component of  $(\ref{Phi_v_0})$.
\end{remark}

\subsection{Local map near $\ww$}
\label{sec_w}
The treatment of \eqref{linW} is similar to \S~\ref{sec_v}.
The solution of  \eqref{linW}  is:
$$
\left\{ 
\begin{array}{l}
x(t)= x_2 e^{(\alpha+\beta)(t-s)}\left(1-\frac{\gamma}{x_2}\int_s^t e^{-(\alpha+\beta)(\tau-s)} f(2\omega \tau) d\tau \right) \\
y(t)= \varepsilon e^{(\beta-\alpha)(t-s)}\\ 
w(t)= w_2e^{-2(t-s) }
\end{array}
\right.
$$
 The time of flight $T_2$ from $In(\ww)$ to $Out(\ww)$ is the solution of  $x_2(T_2)= \varepsilon$: 
\begin{equation}
\label{x2=c}
 e^{(\alpha+\beta)(T_2-s)}x_2 \left(1-\frac{\gamma}{x_2}\int_s^{T_2} e^{-(\alpha+\beta)(\tau-s)} f(2\omega\tau) d\tau \right) =  \varepsilon .
 \end{equation}
 This is difficult to solve, so we compute the Taylor expansion of $T_2$ as function of $\gamma$.
It is easy to see that $T_2(0)=s+ \ln \left( \dfrac{\varepsilon}{x_2} \right)^\frac{1}{\alpha+\beta}$.
Differentiating equation \eqref{x2=c} with respect to $\gamma$, and evaluating at $\gamma=0$,
it yields:
$$
 \frac{dT_2}{d \gamma}(0)
 (\alpha+\beta) e^{(\alpha+\beta)(T_2(0) -s)}x_2- e^{(\alpha+\beta) (T_2(0)-s)}\int_s^{T_2(0)} e^{-(\alpha+\beta)(\tau-s)}  f(2\omega \tau) d\tau=0\ ,$$
implying that:
$$
\frac{dT_2}{d\gamma}(0)=
 \frac{1}{x_2(\alpha+\beta)}    \int_s^{T_2(0)} e^{-(\alpha+\beta)(\tau-s)} 
 f(2\omega \tau) d\tau .
$$
Thus, truncating at second order in $\gamma$ we obtain:
$$
T_2(\gamma)\approx s+\frac{1}{\alpha+\beta} \ln \left( \frac{\varepsilon}{x_2}\right) +\gamma\left[ \frac{1}{x_2(\alpha+\beta)}    \int_s^{T_2(0)} e^{-(\alpha+\beta)(\tau-s)}  f(2\omega \tau) d\tau \right] + O(\gamma^2) .
$$ 
Since $y(t)= \varepsilon e^{(\beta-\alpha)(t-s)}$, setting 
$\widehat\delta =\frac{C_\ww}{E_\ww}=\frac{\alpha-\beta}{\alpha+\beta}$ as in \S~\ref{gamma=0},
we get:
 $$
y(T_2(0))= \varepsilon e^{(\beta-\alpha)\left(\frac{1}{\alpha+\beta} \ln \left( \frac{\varepsilon}{x_2}\right)\right)} 
=\varepsilon \left(\frac{x_2}{\varepsilon }\right)^{\widehat\delta }
$$
 and then:
 \begin{eqnarray*}
 \frac{d y_2}{d\gamma}(0)
 &=& \varepsilon(\beta-\alpha) e^{(\beta-\alpha)(T_2(0)-s)}T_2'(0)\\
 &=& \varepsilon (\beta-\alpha) \left(\frac{\varepsilon}{x_2}\right)^{-\widehat\delta } \left[ \frac{1}{x_2(\alpha+\beta)}    \int_s^{T_2(0)} e^{-(\alpha+\beta)(\tau-s)}  f(2\omega \tau) d\tau \right]\\ 
&=& \widehat\delta  \varepsilon^{1-\widehat\delta } x_2^{\widehat\delta -1} \int_s^{T_2(0)} e^{-(\alpha+\beta)(\tau-s)} f(2\omega\tau)d\tau .
 \end{eqnarray*}
 Adding up, we get:
$$ y_2(\gamma) = (\varepsilon^{-\widehat\delta  +1} x_2^{\widehat\delta }) + \gamma \left[ \widehat\delta  \varepsilon^{1-\widehat\delta } x_2^{\widehat\delta -1} \int_s^{T_2(0)} e^{-(\alpha+\beta)(\tau-s)} f(2\omega\tau)d\tau  \right]
+O(\gamma^2).
$$
Concerning the coordinate $w$, we may write:
$$
w(0)=w_2
 e^{-2(T_2(0)-s)}+1 =
 w_2
  \left( \frac{\varepsilon}{x_2}\right)^\frac{-2}{\alpha+\beta} .
$$

Setting 
$$
K_1= \int_s^{T_2(0)} e^{-(\alpha+\beta)(\tau-s)}  f(2\omega \tau) d\tau, 
$$
and truncating the third component to order 0 in $\gamma$,
we get:
$$
\Phi_\ww(s,x_2, w_2)=\left(
\begin{array}{c}
\dpt s+\frac{1}{\alpha+\beta} \ln \left( \frac{\varepsilon}{x_2}\right) + \frac{\gamma K_1}{x_2(\alpha+\beta)}     \\
\dpt \varepsilon^{1-\widehat\delta } x_2^{\widehat\delta }
\left(1+ \frac{\gamma K_1 \widehat\delta }{ x_2}\right) \\
\dpt  1+ 
w_2
\left( \frac{\varepsilon}{x_2}\right)^\frac{-2}{\alpha+\beta} 
 \end{array}\right) + O(\gamma^2) 
 =(T_2, \hat{y}_2, \hat{w}_2).
$$


\begin{remark}
\label{RemB}
When $s=0$, $\varepsilon=1$ and $\gamma=0$, the last two components of the previous map coincide with the expression given in \eqref{Phi_w_0}. 
\end{remark}

\subsection{Discussion of the time dependence}\label{secTimeDep}

We are assuming in \eqref{general} that $\alpha+\beta>0$, 
hence the term $e^{-(\alpha+\beta)\tau} f(2\omega \tau) $  that appears inside the integrals in $\Phi_\ww$ tends to zero as $\tau$ goes to $\infty$ -- see Figure \ref{sinusoide1}.
For large values of $\tau$ we may take the contribution of this integral to be time independent.

\begin{figure}[ht]
\begin{center}
\includegraphics[width=7cm]{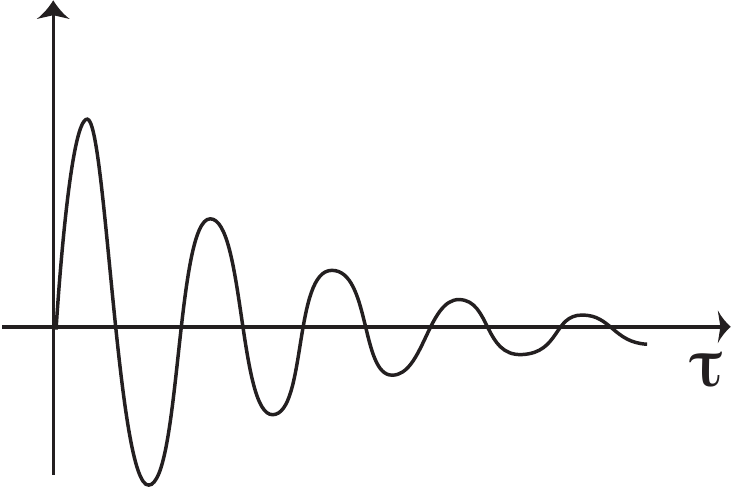}
\end{center}
\caption{\small Graph of  $e^{-(\alpha+\beta)\tau} f(2\omega \tau) $ as a function of $\tau$.}
\label{sinusoide1}
\end{figure}

On the other hand, the assumptions in  \eqref{general} also include $\alpha>-\beta>0$ implying that the coefficient $-(\beta-\alpha)>0$. Hence the exponent $-(\beta-\alpha)\tau$  that appears in $\Phi_\vv$ increases with $\tau$ and the integral cannot be ignored.  
In order to obtain estimates for the integral, let $I_A$, for  $A\ne 0$, be given by:
$$
I_A= \int  e^{-A(\tau-s)} f( 2\omega \tau ) d\tau.
$$

\bigbreak

\begin{lemma} 
\label{primitiva}
If $f^{\prime\prime}(t)=- f(t)$, then:
$$
I_A= \frac{-A^2}{A^2 + 4\omega^2} e^{-A(\tau-s)}
\left( \frac{1}{A}  f( 2\omega \tau )+ \frac{2\omega}{A^2} f^\prime (2\omega \tau) \right). $$
\end{lemma}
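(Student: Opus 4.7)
The plan is a standard double integration by parts, exploiting the hypothesis $f''=-f$ to bring back a multiple of $I_A$ on the right-hand side and then solving algebraically.

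First I would apply integration by parts to
$$
I_A=\int e^{-A(\tau-s)} f(2\omega\tau)\,d\tau,
$$
choosing $u=f(2\omega\tau)$, $dv=e^{-A(\tau-s)}\,d\tau$, so that $du=2\omega f'(2\omega\tau)\,d\tau$ and $v=-\tfrac{1}{A}e^{-A(\tau-s)}$. This yields
$$
I_A=-\tfrac{1}{A}e^{-A(\tau-s)} f(2\omega\tau)+\tfrac{2\omega}{A}\int e^{-A(\tau-s)} f'(2\omega\tau)\,d\tau.
$$
Then I would integrate by parts a second time in the remaining integral, with $u=f'(2\omega\tau)$ and the same choice of $dv$; here the hypothesis $f''=-f$ turns the derivative $du=2\omega f''(2\omega\tau)\,d\tau$ into $-2\omega f(2\omega\tau)\,d\tau$, reintroducing $I_A$. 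This produces
$$
\int e^{-A(\tau-s)} f'(2\omega\tau)\,d\tau=-\tfrac{1}{A}e^{-A(\tau-s)} f'(2\omega\tau)-\tfrac{2\omega}{A}I_A.
$$

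Substituting the second identity into the first gives
$$
I_A=-e^{-A(\tau-s)}\!\left(\tfrac{1}{A}f(2\omega\tau)+\tfrac{2\omega}{A^2}f'(2\omega\tau)\right)-\tfrac{4\omega^2}{A^2}I_A,
$$
which is a linear equation for $I_A$. Collecting terms yields $I_A(A^2+4\omega^2)/A^2$ on the left and, after multiplication by $A^2/(A^2+4\omega^2)$, the stated formula. No step should be delicate: the expressions $A^2+4\omega^2$ and $A\neq 0$ in the statement guarantee that the algebraic solution is well defined.

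If one prefers a cleaner write-up, an equivalent and even shorter approach is simply to differentiate the proposed antiderivative and check that the derivative equals $e^{-A(\tau-s)} f(2\omega\tau)$; the cancellation of the cross-terms in $f'$ happens exactly because $f''=-f$, and the remaining coefficient of $f$ combines as $(A^2+4\omega^2)/(A^2+4\omega^2)=1$. I do not anticipate any real obstacle; the only point requiring attention is keeping track of the chain-rule factors $2\omega$ when differentiating $f(2\omega\tau)$ and $f'(2\omega\tau)$.
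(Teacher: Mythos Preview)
Your proposal is correct and follows essentially the same approach as the paper: integrate by parts twice, use $f''=-f$ to recover a multiple of $I_A$, and solve the resulting linear equation. The alternative verification by direct differentiation that you mention is also valid but is not the route taken in the paper.
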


\bigbreak

\begin{proof}
Integrating by parts twice, we obtain:
$$
I_A= \frac{-1}{A}e^{-A(\tau-s)} f( 2\omega \tau ) 
-\frac{2\omega}{A^2}  e^{-A(\tau-s)} f'(2\omega \tau ) -\frac{4\omega^2}{A^2}  I_A .
$$
Hence, 
$$
\left(1+ \frac{4\omega^2}{A^2}\right)  I_A = 
 -e^{-A(\tau-s)}\left( \frac{1}{A} f(2\omega \tau )  +\frac{2\omega}{A^2}  f'(2\omega \tau ) \right)
   $$
 which is equivalent to the expression in the statement.
   \end{proof}     

\begin{lemma}\label{lemaIntegral}
If $f^{\prime\prime}(t)=- f(t)$
and $T_1=s +\frac{1}{\alpha+ \beta} \ln \left( \frac{\varepsilon}{y_1}\right)$ as in the expression of $\Phi_\vv$, we have
$$
\int_s^{T_1} e^{-(\beta-\alpha)(\tau-s)} f( 2\omega \tau ) d\tau=
\frac{-(\beta-\alpha)^2}{(\beta-\alpha)^2 + 4\omega^2} \left[
 \left( \frac{\varepsilon}{y_1}\right)^{\widehat\delta } \left(c_1f( 2\omega T_1)+ c_2f^\prime(2\omega T_1)\right)
 -\left(c_1f(2\omega s)+ c_2f^\prime(2\omega s)\right)
\right] 
$$
where
$$
c_1=\frac{1}{\beta-\alpha}  \qquad \text{and}\qquad c_2= \frac{2\omega}{(\beta-\alpha)^2} .
$$
\end{lemma}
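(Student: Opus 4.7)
The plan is to apply Lemma \ref{primitiva} directly with $A=\beta-\alpha$ (which is nonzero by the standing assumption $|\beta|<\alpha$, so in fact $A<0$), and then just substitute the bounds. The antiderivative handed to us by Lemma \ref{primitiva} is already of the exact shape that appears in the statement we want to prove: the prefactor $-A^2/(A^2+4\omega^2)$ specialises to $-(\beta-\alpha)^2/((\beta-\alpha)^2+4\omega^2)$, and the coefficients $1/A$ and $2\omega/A^2$ specialise to $c_1$ and $c_2$ respectively. So essentially all there is to do is evaluate the exponential factor $e^{-A(\tau-s)}$ at the two endpoints.

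First I would compute the value at the upper endpoint. From the formula $T_1=s+\frac{1}{\alpha+\beta}\ln(\varepsilon/y_1)$ it follows that
\begin{equation*}
e^{-(\beta-\alpha)(T_1-s)} = \exp\!\left(\frac{\alpha-\beta}{\alpha+\beta}\ln\frac{\varepsilon}{y_1}\right) = \left(\frac{\varepsilon}{y_1}\right)^{\widehat\delta},
\end{equation*}
using the identification $\widehat\delta=(\alpha-\beta)/(\alpha+\beta)$ recalled in Section~\ref{gamma=0}. At the lower endpoint, $e^{-(\beta-\alpha)(s-s)}=1$ trivially.

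Next I would substitute into the fundamental theorem of calculus. Writing
\begin{equation*}
\int_s^{T_1} e^{-(\beta-\alpha)(\tau-s)} f(2\omega\tau)\,d\tau = \bigl[I_{\beta-\alpha}\bigr]_{\tau=s}^{\tau=T_1},
\end{equation*}
the upper limit contributes $\left(\frac{\varepsilon}{y_1}\right)^{\widehat\delta}(c_1 f(2\omega T_1)+c_2 f'(2\omega T_1))$ and the lower limit contributes $c_1 f(2\omega s)+c_2 f'(2\omega s)$; pulling out the common prefactor $-(\beta-\alpha)^2/((\beta-\alpha)^2+4\omega^2)$ gives exactly the expression in the statement.

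There is really no hard step here, only bookkeeping: one has to keep straight the sign conventions (note $A=\beta-\alpha<0$ so that the factors $c_1,c_2$ are negative, but this does not affect the algebra) and the identity $-A/(\alpha+\beta)=\widehat\delta$ that produces the power $(\varepsilon/y_1)^{\widehat\delta}$. The only place one could slip is conflating the positive quantity $\alpha-\beta$ appearing in $\widehat\delta$ with the negative quantity $\beta-\alpha=A$ appearing in the coefficients $c_1,c_2$; keeping these distinct throughout the substitution yields the claimed formula.
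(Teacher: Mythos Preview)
Your proposal is correct and follows exactly the paper's own proof: apply Lemma~\ref{primitiva} with $A=\beta-\alpha$, note that $e^{-(\beta-\alpha)(T_1-s)}=(\varepsilon/y_1)^{\widehat\delta}$, and evaluate at the endpoints. One tiny slip in your aside: $c_2=2\omega/(\beta-\alpha)^2$ is positive, not negative, since the denominator is a square; as you say, this has no bearing on the computation.
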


\begin{proof}
First of all note that $e^{-(\beta-\alpha)(T_1-s)}=  \left( \frac{\varepsilon}{y_1}\right)^{\widehat\delta }$.  
Evaluating the expression of Lemma \ref{primitiva}  in $T_1$ and in $s$, with $A= \beta-\alpha$, and tidying up, we obtain the result. 

\end{proof}
\begin{remark}
If $f$ is an arbitrary periodic function, then Lemma \ref{lemaIntegral} may be applied to each one of the terms of its Fourier expansion.
\end{remark}

\subsection{First return map}\label{secReturn}
From now on we return to the assumption $f(2\omega s)=\sin (2\omega s)$.
Hence $f$ satisfies the hypothesis of Lemma~\ref{lemaIntegral} and the expression of $\Phi_\vv$ may be simplified.
The second coordinate of $\Phi_\vv$ (see \S~\ref{sec_v} above) is 
$$
\hat{x}_1=
\varepsilon^{-\widehat\delta +1} y_1^{\widehat\delta } \left(1-\frac{\gamma}{\varepsilon}\int_s^{T_1} e^{-(\beta-\alpha)(\tau-s)} \sin( 2\omega \tau ) d\tau \right) .
$$
Using the expression in Lemma~\ref{lemaIntegral} yields $\hat{x}_1=\varepsilon^{-\widehat\delta +1} y_1^{\widehat\delta }+\br $ where:
$$
\br = \frac{\gamma}{\varepsilon}
\varepsilon^{-\widehat\delta +1} y_1^{\widehat\delta } 
\frac{(\alpha-\beta)^2}{(\alpha-\beta)^2+4\omega^2}\left[
 \left( \frac{\varepsilon}{y_1}\right)^{\widehat\delta } \left(c_1f(2\omega T_1)+ c_2f^\prime(2\omega T_1)\right)
 -\left(c_1 f(2\omega s) + c_2f^\prime(2\omega s)\right)
\right] 
$$
hence, for $\varepsilon=1$,
$$
\br =\frac{\gamma(\alpha-\beta)^2}{(\alpha-\beta)^2+4\omega^2}
\left[
 \left(c_1f(2\omega T_1)+ c_2f^\prime(2\omega T_1)\right)
-y_1^{\widehat\delta } \varepsilon^{-\widehat\delta }\left(c_1f(2\omega s)+ c_2f^\prime(2\omega s)\right) \right].
$$

For $f(2\omega s)=\sin (2\omega s)$,  the expression $c_1f(X)+ c_2f^\prime(X)$ may be replaced by $\hat{k}\sin(X-\hat{\theta})$, for some $\hat{\theta}$.
Using the expressions for $c_1$, $c_2$ form Lemma~\ref{lemaIntegral} we get
$$
\hat{k}=\sqrt{c_1^2+c_2^2}=\frac{1}{(\alpha-\beta)^2}\sqrt{(\alpha-\beta)^2+4\omega^2}.
$$
From now on, let us set:
$$
\bar{k}= \frac{(\alpha-\beta)^2 \hat{k}}{(\alpha-\beta)^2+4\omega^2}=
\frac{1}{\sqrt{(\alpha-\beta)^2+4\omega^2}} 
$$
hence
$$
\br =\bar{k}\gamma f(2\omega T_1-\hat{\theta})-y_1^{\widehat\delta } \varepsilon^{-\widehat\delta }\bar{k} \gamma f (2\omega s-\hat{\theta}) .
$$

Finally the whole expression for $\br $ may be rewritten as an approximation of $\gamma \bar{k} \sin(2\omega T_1-\theta)$ where the dependence of $\bar{k}$ on $y_1$ may be ignored for small $y_1$ and $\gamma$. We shall ignore the phase shift term $\theta$.
Returning to the usage $ f(t) = \sin t$,
  we  use from now on the simplified expression:
$$
\Phi_\vv(s,y_1,w_1) =
\left(\begin{array}{c}
\dpt    s +\frac{1}{\alpha+ \beta} \ln \left( \frac{\varepsilon}{y_1}\right) \\
\dpt \varepsilon^{-\widehat\delta +1} y_1^{\widehat\delta } +\gamma \bar{k} \sin(2\omega T_1)\\ 
\dpt w_1 \left( \frac{\varepsilon}{y_1}\right)^{\frac{-2}{\alpha+\beta}}
\end{array}\right)
=(T_1, \hat{x}_1, \hat{w}_1).
$$

For the calculation of the first return map we take the transitions between the neighbourhoods $V$ and $W$ to be the identity, with $\varepsilon=1$.
The second coordinate $\left.\Phi_\ww(\Phi_\vv)\right|_2$ of $\Phi_\ww(\Phi_\vv)$ is:
$$
\left.\Phi_\ww(\Phi_\vv)\right|_2=
\left(y_1^{\widehat\delta } +\gamma \bar{k} \sin(2\omega T_1) \right)^{\widehat\delta }
\left( 1+ \frac{\gamma K_1 \widehat\delta }{y_1^{\widehat\delta } +\gamma \bar{k} \sin(2\omega T_1)}\right) .
$$
Taking into account that $\sum_{j=0}^\infty x^j=\frac{1}{1-x}$, we expand the factor on the right to get:
$$
1+\frac{\gamma K_1 \widehat\delta }{y_1^{\widehat\delta } +\gamma \bar{k} \sin(2\omega T_1)}=
1+\frac{\gamma K_1 \widehat\delta }{y_1^{\widehat\delta }}
\left[1-\frac{\gamma \bar{k} \sin(2\omega T_1)}{y_1^{\widehat\delta }} +\left(\frac{\gamma \bar{k} \sin(2\omega T_1)}{y_1^{\widehat\delta }} \right)^2
-\left(\frac{\gamma \bar{k} \sin(2\omega T_1)}{y_1^{\widehat\delta }} \right)^3+\cdots\right].
$$
Using $\delta=(\widehat{\delta})^2$ the factor on the left is:
$$
\left(y_1^{\widehat\delta } +\gamma \bar{k} \sin(2\omega T_1) \right)^{\widehat\delta }=
y_1^ \delta +\gamma \frac{\widehat\delta  y_1^ \delta }{y_1^{\widehat\delta }}\bar{k}\sin(2\omega T_1)+ O(\gamma^2).
$$
Thus, truncating to order 1 in $\gamma$,
\begin{eqnarray*}
\label{k_1}
\left.\Phi_\ww(\Phi_\vv)\right|_2&=& y_1^\delta +\gamma  \frac{\widehat\delta  y_1^\delta }{y_1^{\widehat\delta }}\bar{k}\sin(2\omega T_1) + \gamma \frac{ K_1 \widehat\delta }{y_1^{\widehat\delta }} y_1^\delta \\
&= & y_1^\delta +\gamma \widehat\delta  \frac{y_1^\delta }{y_1^{\widehat\delta }}
\left[K_1+ \bar{k}\sin(2\omega T_1)\right]+\ldots\\
\end{eqnarray*}
and hence, taking $y_1^\delta /y_1^{\widehat\delta }\approx 1$ and $T_1\approx s$,
 and replacing $y_1$, $w_1$ by $y$, $w$ to lighten notation, the simplified version becomes:
$$
\widetilde{G}(s,y, w)=\left(s+\frac{2\ln \varepsilon}{\alpha+\beta}-\frac{(1+\widehat\delta )\ln y}{\alpha+\beta},
y^{\frac{(\alpha-\beta)^2}{(\alpha+\beta)^2}} +\gamma\left(k_2+k_1\sin(2\omega s)\right),
C_2wy^{\frac{4\alpha}{(\alpha+\beta)^2}}
\right)=(s,y, w)
$$
where $s$ is computed $\pmod{\pi/\omega}$
and we may take the constant $k_2=1$ by rescaling $\gamma$ 
with $k_1=\frac{\bar{k}}{K_1}$. 
From now on we assume $0<k_1\ne 1$
(see discussion of $K_1$ in the beginning of this section).

\begin{remark}
For $s=\gamma=0$,  $\varepsilon=1$, is easy to check that the last two coordinates of $\widetilde{G}$ coincide with $G_0(y,w)$ in \eqref{G0} and that the first coordinate equals the estimated time of first return $T_1+T_2$ given at the end of  \S~\ref{secgamma0}.
\label{RemC}
\end{remark}

\subsection{Reduction}

In the region   $(\alpha-\beta)^2<4\alpha$  for $\alpha>0$, $\beta<0$ (see Figure~\ref{parabola1})
 the exponent of $y$ in the second coordinate  of  $\widetilde{G} (s,y, w)$   is smaller
   than in the third coordinate.
   Moreover, the first two coordinates do not depend on $z$.
 Hence, for small $y$ we can ignore the last coordinate of $\widetilde G $ and analyse the map, given in coordinates $(s,y) $ by:
 \begin{equation}
 \label{first return1}
{G}_{\gamma}(s,y)=
\left(s+\frac{2\ln \varepsilon}{\alpha+\beta}-\frac{2\alpha}{(\alpha+\beta)^2}\ln y, \,
y^{\delta} +\gamma\left(1+k_1\sin(2\omega s)\right)\right)= (g_1(s, y), g_2(s, y)) .
 \end{equation}
We have also ignored terms that are $O(\gamma^2)$ or higher, hence, for  $\gamma$ sufficiently small we
 have proved that if $\gamma>0$, $\varepsilon=1$ and $(\alpha-\beta)^2<4\alpha$, the first return map $G  $ to $In(\vv)$ of the flow defined by  \eqref{general} is approximated by the map:
$$
{G}(s,y)=
\left(s-\frac{2\alpha}{(\alpha+\beta)^2}\ln y, \,\,
y^{\delta} +\gamma\left(1+k_1\sin(2\omega s)\right)\right)= (g_1(s, y), g_2(s, y)) .
$$
Theorem \ref{Th1} is proved for the cross-section $In (\vv)$.

Although the map $G$ only provides information about the flow of  \eqref{general},  if  we take $\gamma$ sufficiently small,
the dynamics of $G$ is worth studying in all cases, so we lift this restriction in  later sections. Recall that, as remarked in \S~\ref{main results} the natural phase space for  $G$ is the cylinder 
${\mathcal C}$ defined in \eqref{cylinder}.

 \begin{remark}
From the expression of $\left.\Phi_\ww(\Phi_\vv)\right|_2$ given above, it is easy to see that if $f\equiv 1$ then $k_1=0$.  This corresponds to an autonomous perturbation and will be treated in \S \ref{6.1}.
 \end{remark}

 \section{Finding periodic solutions: stability and bifurcations}
\label{sec 5}
In this section, we study periodic solutions of \eqref{general}
and their period. 
We also discuss their bifurcations  when the parameters $\gamma$ and $k_1$   vary. 
 We introduce an auxiliary parameter $T$, and in \S~\ref{secMT}, \S~\ref{secBifurcations} and  \S~\ref{secStability} we analyse the bifurcations on this parameter for different values of $\gamma>0$ and $k_1>0$.
 The auxiliary parameter $T$ is then removed in \S~\ref{subSecFL} to yield solutions of the original problem.
 
  We address the problem of
solving the equation $\GG(s,y)=(s, y)$ where  $\GG(s,y)= G(s,y)-(T,0)$.
This means that we need to solve:
\begin{equation}\label{eqPeriodicSolutions}
\GG(s,y)=\left(s-\frac{\ln y}{K},
y^{\delta} +\gamma\left(1+k_1\sin(2\omega s)\right)\right)
-\left(T,0\right)=
\left(s,y\right)
\end{equation}
 for $T>0$, the fixed points of $\GG(s,y)$.

\subsection{The time averaged case}
\label{6.1}
In the special case where the perturbation is autonomous  we have:

\begin{lemma}\label{propk1zero}
Suppose $k_1=0$. For $\gamma >0$ small, there are two fixed points of  $\GG(s,y)$, one stable and the other unstable,
with the value of $T$ tending   to $+\infty$ when $\gamma$ tends to $0$. When  
 $\gamma$ reaches a threshold value $M$, the two fixed points collapse at a saddle-node bifurcation and for $\gamma>M$ there are no fixed points of  $\GG(s,y)$. 
\end{lemma}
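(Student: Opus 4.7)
My plan is to exploit the fact that when $k_1=0$ the second coordinate of $\GG$ loses its $s$-dependence, so the fixed-point problem \eqref{eqPeriodicSolutions} decouples and reduces to a scalar equation in $y$. Setting $k_1=0$, the second component of $\GG(s,y)=(s,y)$ becomes the scalar equation
$$h(y):=y-y^\delta=\gamma,$$
while the first component reduces to $T\equiv -K^{-1}\ln y \pmod{\pi/\omega}$, which prescribes the matching value of $T$ in terms of $y$ and imposes no condition on $s$.

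Next I would analyse $h$ on $(0,\infty)$ by elementary calculus. Since $\delta>1$, one has $h(0)=h(1)=0$ and $h'(y)=1-\delta y^{\delta-1}$ vanishes uniquely at $y_c=\delta^{1/(1-\delta)}$, where $h$ attains the maximum value $M=\delta^{1/(1-\delta)}-\delta^{\delta/(1-\delta)}$ already appearing in Theorem~\ref{teoremaPeriodicasL}. Consequently $h(y)=\gamma$ has two roots $0<y_1(\gamma)<y_c<y_2(\gamma)$ for $0<\gamma<M$, a double root $y_c$ at $\gamma=M$, and no roots for $\gamma>M$, yielding the asserted saddle-node collision. From $y_1=y_1^\delta+\gamma$ and $\delta>1$, one gets $y_1(\gamma)\sim\gamma$ as $\gamma\to 0^+$, so the associated $T_1=-K^{-1}\ln y_1(\gamma)\to+\infty$.

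For stability I would compute the Jacobian
$$D\GG(s_0,y_0)=\begin{pmatrix}1 & -\dfrac{1}{Ky_0}\\ 0 & \delta y_0^{\delta-1}\end{pmatrix},$$
which is upper triangular with eigenvalues $1$ and $\delta y_0^{\delta-1}$. The transverse multiplier $\delta y_0^{\delta-1}$ is smaller than $1$ at $y_1$ (attracting in $y$), equal to $1$ at $y_c$ (confirming the saddle-node through the non-degeneracy $h''(y_c)=-\delta(\delta-1)y_c^{\delta-2}\neq 0$), and larger than $1$ at $y_2$ (repelling in $y$). The main obstacle I anticipate is interpretive rather than computational: the eigenvalue $1$ makes the fixed points non-hyperbolic along $s$, reflecting the fact that for $k_1=0$ the $s$-coordinate is free whenever the other two conditions are met. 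The stability claim must therefore be read as referring to the reduced one-dimensional dynamics on the $y$-line obtained by quotienting out this degeneracy, where $\delta y_0^{\delta-1}$ is the full multiplier and the saddle-node scenario is standard.
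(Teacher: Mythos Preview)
Your proof is correct and follows essentially the same route as the paper. The paper also reduces to the scalar equation $y-y^\delta=\gamma$, then substitutes $y=e^{-KT}$ to define $F(T)=e^{-KT}-e^{-\delta KT}$ and reads off the claims from the shape of the graph of $F$ (stated separately as Lemma~\ref{lemaF}); you work directly with $h(y)=y-y^\delta$ on $(0,1)$, which is the same argument under the change of variable. Your Jacobian computation and your remark about the neutral eigenvalue $1$ along $s$ go beyond what the paper writes explicitly, and your interpretation---that stability is to be understood in the reduced $y$-dynamics, with $s$ free when $k_1=0$---is the correct reading of the statement.
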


\begin{proof}
Solving the first component of \eqref{eqPeriodicSolutions} we get $y=e^{-KT}$. 
For the second coordinate we get $\gamma=y-y^\delta$,  replacing   $y$ by $e^{-KT}$ in this expression we obtain
\begin{equation}
\label{F(T)}
 F(T)=e^{-KT}-e^{-\delta KT}.
\end{equation}
The result will follow directly from the  properties of the graph of $F(T )$, shown in Figure~\ref{graph_An}, that we state as a separate lemma for future use.
\end{proof}

 \begin{figure}[ht]
\begin{center}
\includegraphics[height=4cm]{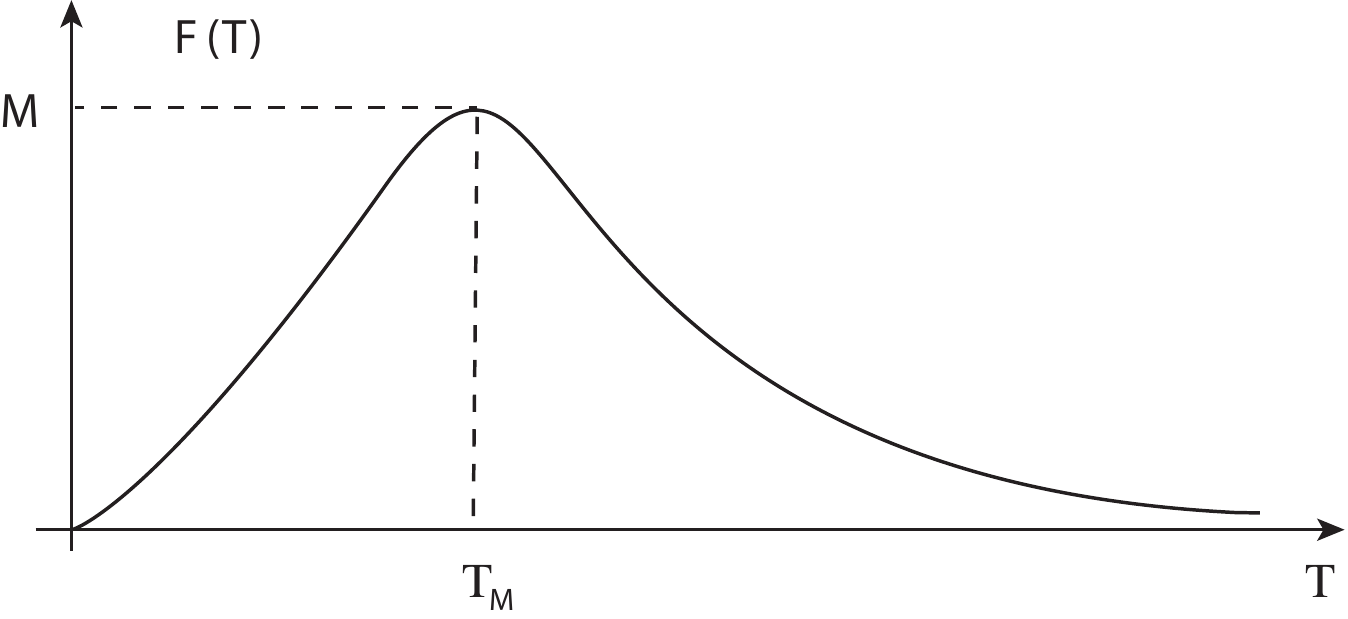}
\end{center}
\caption{\small The graph of $F(T )=e^{-KT}-e^{-\delta KT}$, that attains its maximum at $\dpt T_M=\frac{\ln \delta}{K(\delta-1)}>0$.}
\label{graph_An}
\end{figure}

\begin{lemma}\label{lemaF}
The map $F(T): \RR^+ \rightarrow [0,1]$ has a global maximum $M=\delta^{\frac{1}{1-\delta}}-\delta^{\frac{\delta}{1-\delta}}$ at $\dpt T_M=\frac{\ln \delta}{K(\delta-1)}>0$
and $\dpt \lim_{T\to 0} F(T)= \lim_{T\to \infty} F(T)=0$.
\end{lemma}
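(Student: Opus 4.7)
The plan is a direct calculus exercise on $F(T)=e^{-KT}-e^{-\delta KT}$, exploiting $K>0$ and $\delta>1$. First I would differentiate, obtaining
\begin{equation*}
F'(T)=-Ke^{-KT}+\delta K e^{-\delta KT}=K e^{-KT}\bigl(\delta e^{-(\delta-1)KT}-1\bigr).
\end{equation*}
The factor $\delta e^{-(\delta-1)KT}-1$ is strictly decreasing in $T$, equals $\delta-1>0$ at $T=0$, and tends to $-1$ as $T\to\infty$, so it has a unique zero, giving the unique critical point $T_M=\dfrac{\ln\delta}{K(\delta-1)}>0$. The sign of $F'$ switches from positive to negative at $T_M$, so $F$ is strictly increasing on $(0,T_M)$ and strictly decreasing on $(T_M,\infty)$, identifying $T_M$ as the global maximum.

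Next I would evaluate $F$ at $T_M$. From $(\delta-1)KT_M=\ln\delta$ one gets $e^{-KT_M}=\delta^{-1/(\delta-1)}=\delta^{1/(1-\delta)}$ and, raising to the power $\delta$, $e^{-\delta KT_M}=\delta^{\delta/(1-\delta)}$. Subtracting yields $F(T_M)=\delta^{1/(1-\delta)}-\delta^{\delta/(1-\delta)}=M$ as stated. The limits are immediate: both exponentials tend to $1$ as $T\to 0^+$ and to $0$ as $T\to\infty$, so $F(T)\to 0$ in both cases.

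To justify that $F$ takes values in $[0,1]$, I would note that $\delta>1$ implies $e^{-\delta KT}\le e^{-KT}$ for every $T\ge 0$, so $F(T)\ge 0$; and $F(T)=e^{-KT}\bigl(1-e^{-(\delta-1)KT}\bigr)\le e^{-KT}\le 1$.

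The main obstacle is essentially none: the statement is a single-variable calculus computation, whose only pitfall is keeping the algebra clean when substituting $T_M$ into $F$. Writing the exponents as $1/(1-\delta)$ and $\delta/(1-\delta)$ from the start (rather than dragging minus signs and the factor $\delta-1$ through the computation) matches the target form of $M$ without any further simplification.
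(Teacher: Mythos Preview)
Your proof is correct and follows essentially the same approach as the paper's: differentiate $F$, locate the unique critical point $T_M$ from $\delta e^{-(\delta-1)KT}=1$, check the sign change of $F'$, substitute to obtain $M$, and read off the limits and the bounds $0\le F(T)\le 1$. The only cosmetic difference is that you bound $F(T)\le 1$ by factoring $F(T)=e^{-KT}(1-e^{-(\delta-1)KT})\le e^{-KT}$, whereas the paper argues directly that the maximum value $M$ is less than $1$.
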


\begin{proof}
Differentiating $F$ with respect to $T$, we get:
\begin{equation}\label{dAdT}
\frac{d F}{dT}(T) =
-Ke^{-TK}+\delta Ke^{-\delta  TK}=
-Ke^{-TK}\left( 1-\delta e^{-(\delta-1)T K}\right) .
\end{equation}
From this it is immediate that $\frac{d F}{dT}(T) = 0$ precisely at $T=T_M$, that
$\frac{d F}{dT}(T) > 0$ for $ T<T_M $ and $\frac{d F}{dT}(T) < 0$ for $T>T_M$. 
Finally, we compute
\begin{equation}\label{defM}
M=F(T_M ) = \delta^{\frac{1}{1-\delta}}-\delta^{\frac{\delta}{1-\delta}}>0.
\end{equation}
The two limits are immediate from the expression for $F(T)$.
Note that $M<1$ because $\delta-\delta^\delta<0<\delta^{1-\delta}$, hence  $F(T)<1$.
Finally $F(T)>0$ because $e^{KT}<e^{\delta KT}$.
\end{proof}

\subsection{Proof of Theorem~\ref{teoremaPeriodicasL}}\label{secMT}

 \begin{figure}[hht]
\begin{center}
\includegraphics[scale=.8]{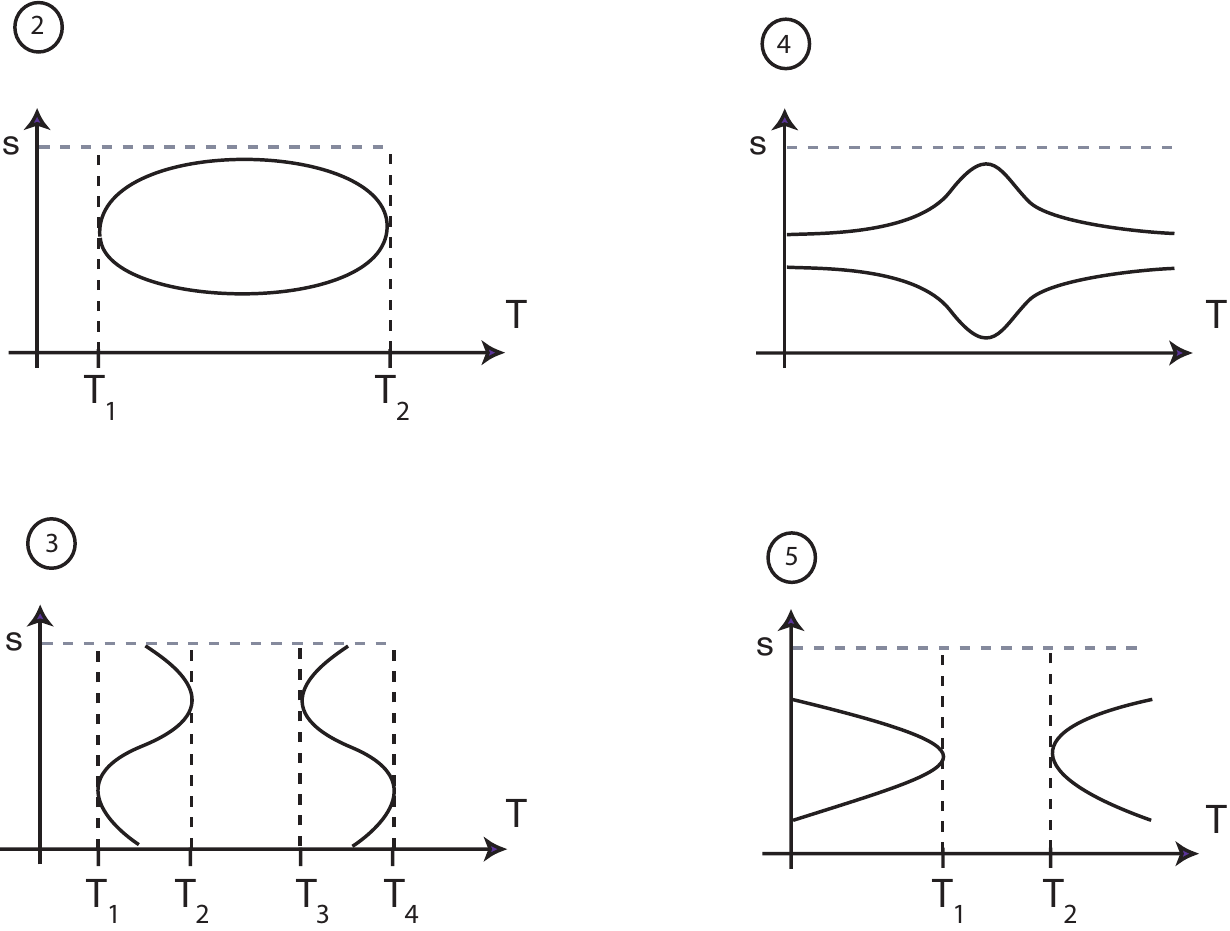}
\end{center}
\caption{\small Bifurcation diagrams of   \eqref{eqPeriodicSolutions} in the $(T,s)$-plane  for parameters in the open regions of Figure~\ref{Bif_diag1}.
In region (1) there are no  solutions. 
Numbering in the other four  open regions also corresponds to Theorem~\ref{teoremaPeriodicasL}.
Recall that the variable $s$ is periodic, so the solutions lie on the surface of a cylinder parametrised by $s\in[0,2\pi/\omega]$ and that $y=e^{-KT}$. } 
\label{TheBif_diag}
\end{figure}

 The key result in the analysis is the bifurcation diagram for  $\GG(s,y)$, shown in Figures~\ref{Bif_diag1} and \ref{TheBif_diag}. 
 Note that, for $k_1>0$,  $\gamma>0$, fixed points of $\GG(s,y)$, solutions  of \eqref{eqPeriodicSolutions} with
$T>0$, satisfy
\begin{equation}
\label{yT}
y(T)=e^{-KT}.
\end{equation}

\begin{proof}
 Solving the first component of \eqref{eqPeriodicSolutions}  we get  $y$ as a function of $T\in \RR^+$ as in \eqref{yT}.
The map $y: \RR^+ \rightarrow \RR^+$ satisfies the following properties:
\begin{itemize}
\item $\lim_{T \rightarrow 0} y(T)=1$
\item  $\lim_{T \rightarrow +\infty} y(T)=0$ and 
\item $y$ decreases monotonically with $T$.
\end{itemize}
From the second coordinate of \eqref{eqPeriodicSolutions}
we have that $y$ must also satisfy
$ y^\delta + \gamma (1+k_1 \sin (2\omega s))= y$, 
which is equivalent to:
\begin{equation}
\label{y12}
y- y^\delta = \gamma( 1+k_1 \sin (2\omega s)).
\end{equation}

The left hand side 
of \eqref{y12}  does not depend on $s$ nor on $\omega$. 
Replacing $y$ by the expression \eqref{yT} yields the map $F$ that was analysed in Lemma~\ref{lemaF}.
In order to find the   fixed points of $\GG(s,y)$, solutions  of \eqref{eqPeriodicSolutions}, we need to solve  for $s$ the expression
$F(T)=\phi_\omega( s )$ where 
$$\phi_\omega( s )=\gamma( 1+k_1 \sin (2\omega s)).
$$
This amounts to intersecting the graph of $\phi_\omega( s )$ with a horizontal line because $F(T)$ does not depend on $s$.
The line moves first up and then down as $T$ increases, as in Figure~\ref{diff frequencies}.
Since the range of $\phi_\omega$ is the interval $\left[\gamma(1-k_1), \gamma(1+k_1)\right]$, and the range of $F(T)$ is the interval $\left(0,M\right]$, the geometry of the solution set depends on the relative positions of these intervals.
The persistent possibilities are shown in Figure~\ref{PeriodTypes}.
The possibilities correspond to the diagrams of Figure~\ref{TheBif_diag}.

Case (1) is the simplest: if $M<\gamma(1-k_1)$ then  the maximum $M$ of $F(T)$ never reaches the minimum value of $\phi_\omega$, as in Figure~\ref{diff frequencies} (a).
This implies that there are no  fixed points of $\GG$ for any  $T$. 
This is only possible if $k_1\in(0,1)$, since $M>0$.
For all  other cases, there are intervals of $T$ values where  \eqref{eqPeriodicSolutions} has a solution $(s,y(T))$, with $y(T)=e^{-KT} $ and $s\in\left[0,\pi/\omega\right]$, so the true representation of the solutions should be a $T$-parametrised curve on a cylinder.

 \begin{figure}
\begin{center}
\includegraphics[width=12cm]{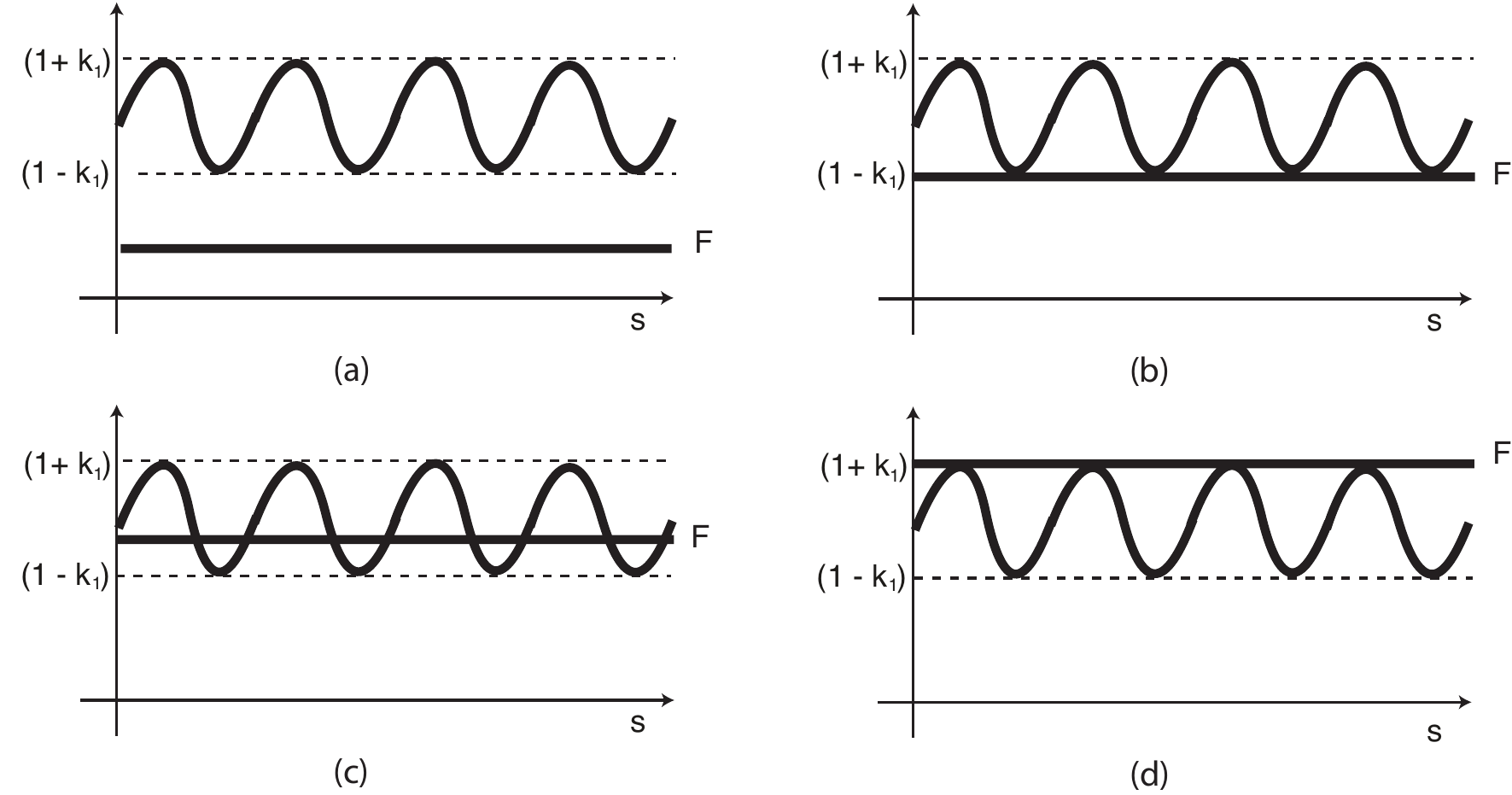}
\end{center}
\caption{\small For each value of the frequency $\omega$, the solution of $F(T)=\phi_\omega( s )$ lies on the intersection of the graph of $\Phi_\omega$ with a horizontal line at height $F(T)$. As $T$ increases the horizontal line moves up, until it reaches its maximum height $M$.
After this the height decreases to zero. Here is illustrated the case $0<k_1<1$ and $\gamma<{M}/{(1+k_1)}$, for $F(T)$
 increasing from (a) to (d).}
 \label{diff frequencies}
\end{figure}

Still with  $k_1\in(0,1)$, for 
 $\gamma$ smaller than in case (1)   we have cases (2) and (3). 
As $T$ increases from 0, there is a threshold value  $T_1$ for which the horizontal line at height $F(T)$ first touches the graph of $\phi_\omega$ as in Figure~\ref{diff frequencies} (b).
At these points we have $\sin (2\omega s)=-1$.
As  $T$ increases further, each tangency point unfolds as two intersections of the graph with the horizontal line as in Figure~\ref{diff frequencies} (c).
Thus, there is a saddle-node at the points 
$$
\left(s_N,y(T_1)\right)
= \left(\frac{3\pi}{4\omega},y(T_1)\right).
$$
In case (2)  when the maximum $M$ of $F(T)$ is attained,  the horizontal line is still in the middle of the graph of $\phi_\omega$ as Figure~\ref{diff frequencies} (c), so the two  solutions coalesce at a second saddle-node at 
$$
\left(s_N,y(T_2)\right)
=  \left(\frac{3\pi}{4\omega},y(T_2)\right).$$
In case (3) the horizontal line may move further up as in Figure~\ref{diff frequencies} (d) and a pair of  solutions come together at a second saddle-node at
$
\left(s_N^1,y(T_2)\right)
= \left({\pi}/{4\omega},y(T_2)\right)
$
and reappear at a saddle-node at
$
\left(s_N^2,y(T_3)\right)
= \left({\pi}/{4\omega},y(T_3)\right)
$
coming together finally at
$
\left(s_N,y(T_4)\right)
= \left({3\pi}/{4\omega},y(T_4)\right)
$.
We show below that at these points the derivative $D\GG(s,y)$ has an eigenvalue equal to 1.

Finally for cases (4) and (5), the minimum value of $\phi_\omega(s)$ is negative, hence for  both small  and very large values of $T$ the horizontal line at height $F(T)$ always crosses the graph of $\phi_\omega(s)$.
In case (4) this is all that happens and there are  solutions for  all values of $T>0$.
In case (5) the horizontal line moves above the graph of $\phi_\omega(s)$ and thus there is an interval $\left(T_1,T_2\right)$ of periods for which  there are no  solutions, with end points at two saddle-nodes as above.
\end{proof}

Note that in Theorem~\ref{teoremaPeriodicasL}  the  values of $y$ and $T$ for which  solutions of \eqref{eqPeriodicSolutions} exist do not depend on the frequency $\omega$, and only the initial time $s$ does.
All the solutions satisfy $y\in[0,1]$  because this interval is the range of the map $y(T)$. 
This is compatible with the assumption made in \S~\ref{cross_sections} that $y\le \varepsilon$, and the fact that in  \eqref{G_general} we have set $\varepsilon=1$.
  \begin{figure}
\begin{center}
 \includegraphics[width=16cm]{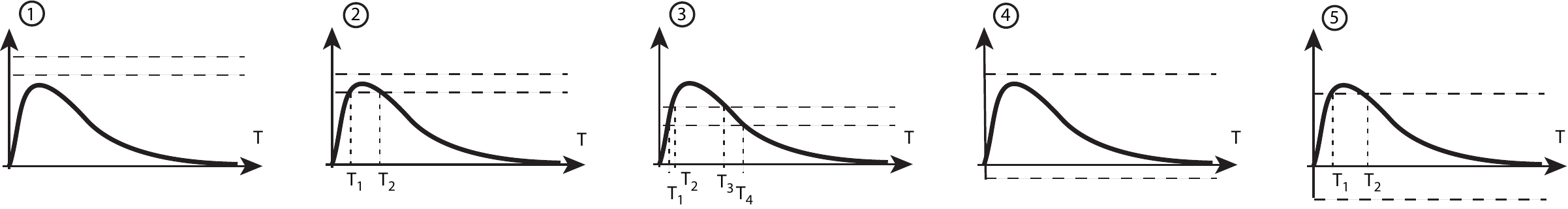}
\end{center}
\caption{\small Persistent possibilities for the relative position of the graph of $F(T)$ and the (dashed) horizontal lines at heights  $\gamma(1-k_1)$ and $ \gamma(1+k_1)$ that bound the range of $\phi_\omega( s )=\gamma( 1+k_1 \sin (2\omega s))$. The numbering corresponds to the cases in Theorem~\ref{teoremaPeriodicasL}. }
\label{PeriodTypes}
\end{figure}

\subsection{Proof of Theorem \ref{BT_L}}\label{secBifurcations}
The organising centres for all the local dynamics are the 
most degenerate points on  the bifurcation diagram of Theorem~\ref{teoremaPeriodicasL}.
They are points where solutions of   \eqref{eqPeriodicSolutions} undergo a discrete-time Bogdanov-Takens bifurcation,
that was studied in \cite{BRS,Yagasaki}, see also \cite[Ch 2 \S2]{Encyclo}.
These points are characterised by the eigenvalues of the derivative of $\GG$.

\begin{figure}
\begin{center}
\includegraphics[width=9cm]{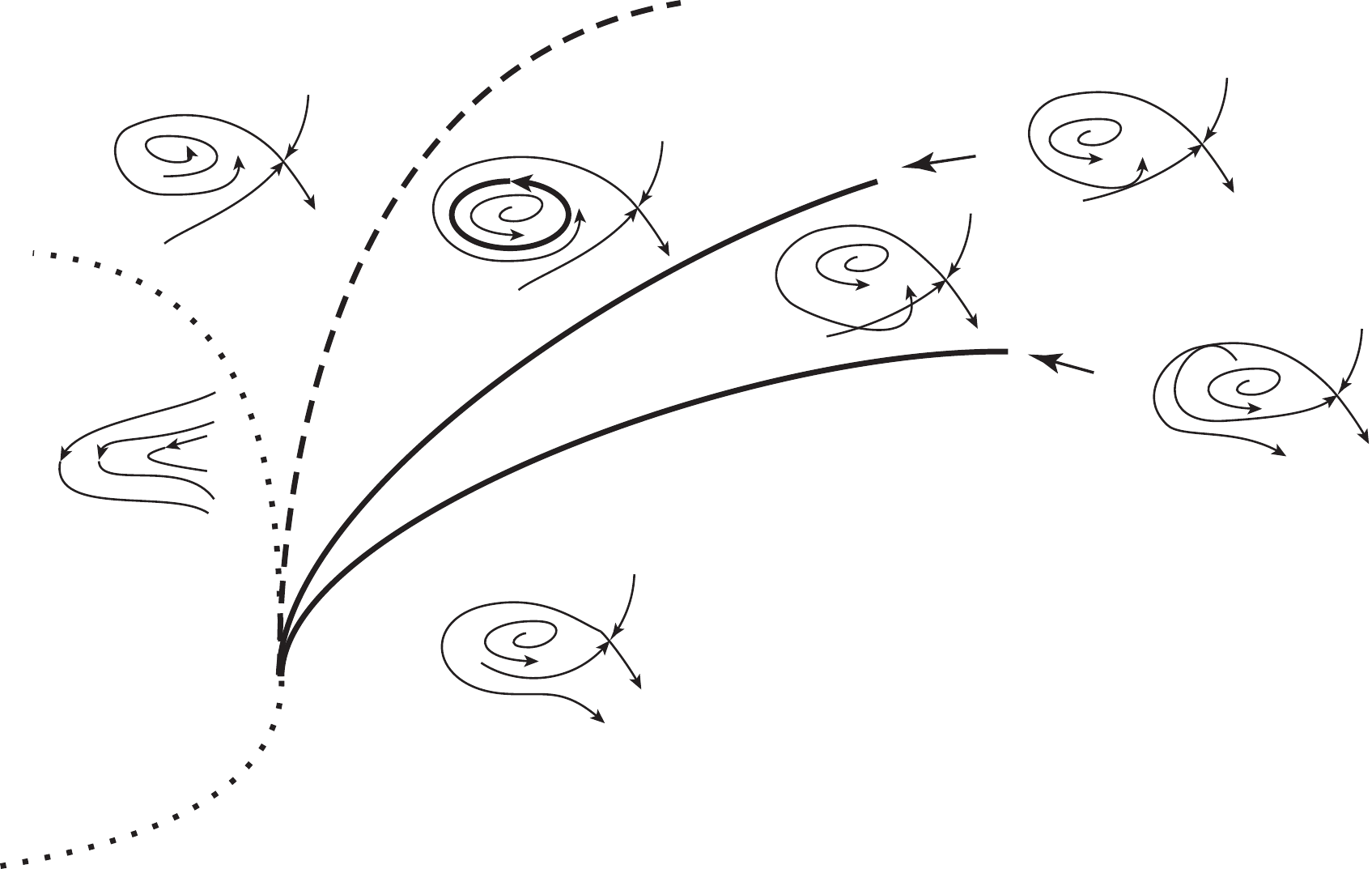}
\end{center}
\caption{\small Bifurcation diagram for the discrete-time  Bogdanov-Takens bifurcation \cite{BRS,Yagasaki}. Conventions: saddle-nodes on the dotted line, Hopf bifurcation on the dashed line, homoclinic tangencies (shown on the right) on the solid lines.  A closed invariant curve exists for parameters between the line of Hopf bifurcation and the line of homoclinic tangency.
The position of the two solid lines in the figure is grossly exagerated, the two curves have an infinite order tangency at the bifurcation point.} 
\label{figBTdiscr}
\end{figure}

\begin{proposition}\label{teoremaBT}
For $\gamma=M/(1\pm k_1)$ and $T=T_M$, the derivative $D\GG(s,y)$ at a solution of  \eqref{eqPeriodicSolutions}  has 1 as  a double eigenvalue and is not the identity.
Moreover, these are the only points where $D\GG(s,y)$ has a double eigenvalue 1.
\end{proposition}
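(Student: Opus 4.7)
The plan is to compute $D\GG(s,y)$ directly from its defining expression, translate the double-eigenvalue-$1$ condition into trace$=2$ and determinant$=1$, and show that these two algebraic equations single out precisely the parameter values in the statement.

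First I would write down
\[
D\GG(s,y)=\begin{pmatrix} 1 & -\dfrac{1}{Ky} \\[4pt] 2\omega\gamma k_1\cos(2\omega s) & \delta y^{\delta-1}\end{pmatrix},
\]
so that $\text{tr}(D\GG)=1+\delta y^{\delta-1}$ and $\det(D\GG)=\delta y^{\delta-1}+\dfrac{2\omega\gamma k_1\cos(2\omega s)}{Ky}$. The requirement that $1$ be a double eigenvalue is equivalent to $\text{tr}(D\GG)=2$ and $\det(D\GG)=1$. The first condition gives $\delta y^{\delta-1}=1$, hence
\[
y=\delta^{\frac{1}{1-\delta}}.
\]
Combining this with the fixed point relation $y=e^{-KT}$ from \eqref{yT} yields $T=\dfrac{\ln\delta}{K(\delta-1)}=T_M$ (the same critical value that appeared in Lemma~\ref{lemaF}, which is a reassuring consistency check).

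Next, substituting $\delta y^{\delta-1}=1$ into the determinant condition forces the second term to vanish, i.e.\ $\cos(2\omega s)=0$. Therefore $\sin(2\omega s)=\pm 1$. Feeding this back into the second component of the fixed point equation \eqref{eqPeriodicSolutions}, namely $y-y^\delta=\gamma(1+k_1\sin(2\omega s))$, and using $F(T_M)=y-y^\delta=M$ by Lemma~\ref{lemaF}, I obtain $M=\gamma(1\pm k_1)$, which is the claimed locus $\gamma=M/(1\pm k_1)$. To see that $D\GG(s,y)$ is not the identity at such a point, I note that the $(1,2)$-entry $-1/(Ky)$ is strictly negative for $y>0$, so the matrix is a genuine Jordan block.

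The uniqueness statement is essentially the above argument read in reverse: at any fixed point where $D\GG$ has $1$ as a double eigenvalue, trace$=2$ forces $y=\delta^{1/(1-\delta)}$ and hence $T=T_M$; determinant$=1$ together with trace$=2$ forces $\cos(2\omega s)=0$; and the fixed point equation then forces $\gamma=M/(1\pm k_1)$. I do not anticipate any real obstacle here, because everything reduces to two linear equations in the entries of an explicit $2\times 2$ matrix; the only delicate point is to keep track of the sign choice $\pm k_1$ coming from $\sin(2\omega s)=\pm 1$ and to check that both branches actually correspond to admissible fixed points in Theorem~\ref{teoremaPeriodicasL}, which is immediate from the bifurcation diagram in Figure~\ref{Bif_diag1}.
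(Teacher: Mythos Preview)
Your proposal is correct and follows essentially the same approach as the paper: both reduce the double-eigenvalue-$1$ condition to $\tr D\GG=2$ and $\det D\GG=1$, derive $\delta y^{\delta-1}=1$ (hence $T=T_M$) from the trace, and $\cos(2\omega s)=0$ (hence $\gamma=M/(1\pm k_1)$) from the determinant, with the nonzero off-diagonal entry ruling out the identity. The only organizational difference is that the paper first verifies the claim at the specific saddle-node points $(s_N,y(T_N))$---where the Jacobian is triangular---and then runs the trace/determinant argument for uniqueness, whereas you use the trace/determinant characterization to handle existence and uniqueness in one pass.
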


\begin{proof}
We  compute the derivative
$D\GG (s,y)$ at the  points $\left(s_N,y(T_N)\right)$, $N=1,\ldots,4$, where $\sin(2\omega s_N)=\pm 1$ and  get:
$$
 D\GG{} \left(s_N,y(T_N)\right)=
 \left(
\begin{array}{lr} 1&\dpt-\frac{1}{K y}\\
&\\
0&\delta y^{\delta-1} 
 \end{array} \right).
$$
At $\left(s_N,y(T_N)\right)$ the Jacobian matrix is triangular and so the two eigenvalues are 
$\mu_1=1$ and $\mu_2=\delta y^{\delta-1}>0$.
Using \eqref{yT}  this may be rewritten as  $\mu_2=\delta e^{-(\delta-1)T _NK}$.
Since $T_M$ was defined to be the value of $T$ where the function $F(T)$ defined in \eqref{F(T)} has a global maximum, then $\dfrac{dF}{dT} (T_M)=0$.
By  \eqref{dAdT} this means
$$
1=\delta e^{-(\delta-1)T _MK}=\delta \left(y(T_M)\right)^{\delta-1}=\mu_2. 
$$
Hence the derivative of $\GG$ at these points has a double eigenvalue equal to 1, and is not the identity.
The points in question are those where $\sin(2\omega s_N)=\pm1$ for $(s_N,y(T_N))$, hence $F(T_N)=\gamma(1\pm k_1)$ and $T_N=T_M$ implying $F(T_M)=M=\gamma(1\pm k_1)$.
Finally, $D\GG{}(s,y)$ has a double eigenvalue 1 if and only if the trace $\tr D\GG{}(s,y)=2$ and $\det D\GG{}(s,y)=1$.
Computing the Jacobian
$$
D\GG{} (s,y)= 
\left(
\begin{array}{lr} 1&\dpt-\frac{1}{K y}\\
&\\
2\omega \gamma k_1\cos(2\omega s)&\delta y^{\delta-1} 
 \end{array} \right)
 $$
then $\tr D\GG{}(s,y)= 2$ implies that $\delta y^{\delta-1} =1$ and by  \eqref{yT} this means $\delta e^{-(\delta-1)T K}=1$.
Using  \eqref{dAdT} it follows that $\dfrac{dF}{dT} (T)=0$ hence $T=T_M$.
On the other hand since $\delta y^{\delta-1} =1$, then  
$$
\det D\GG{}(s,y)=1+2\omega \gamma k_1\cos(2\omega s)/Ky.
$$
Therefore, $\det D\GG{}(s,y)=1$ implies $\cos(2\omega s)=0$, hence $\sin(2\omega s)=\pm 1$ and $T=T_N$, $N=1,\ldots, 4$.
The coincidence $T_N=T_M$ only happens if $\gamma=M/(1\pm k_1)$.
\end{proof}

\begin{figure}
\begin{center}
\includegraphics[width=6cm]{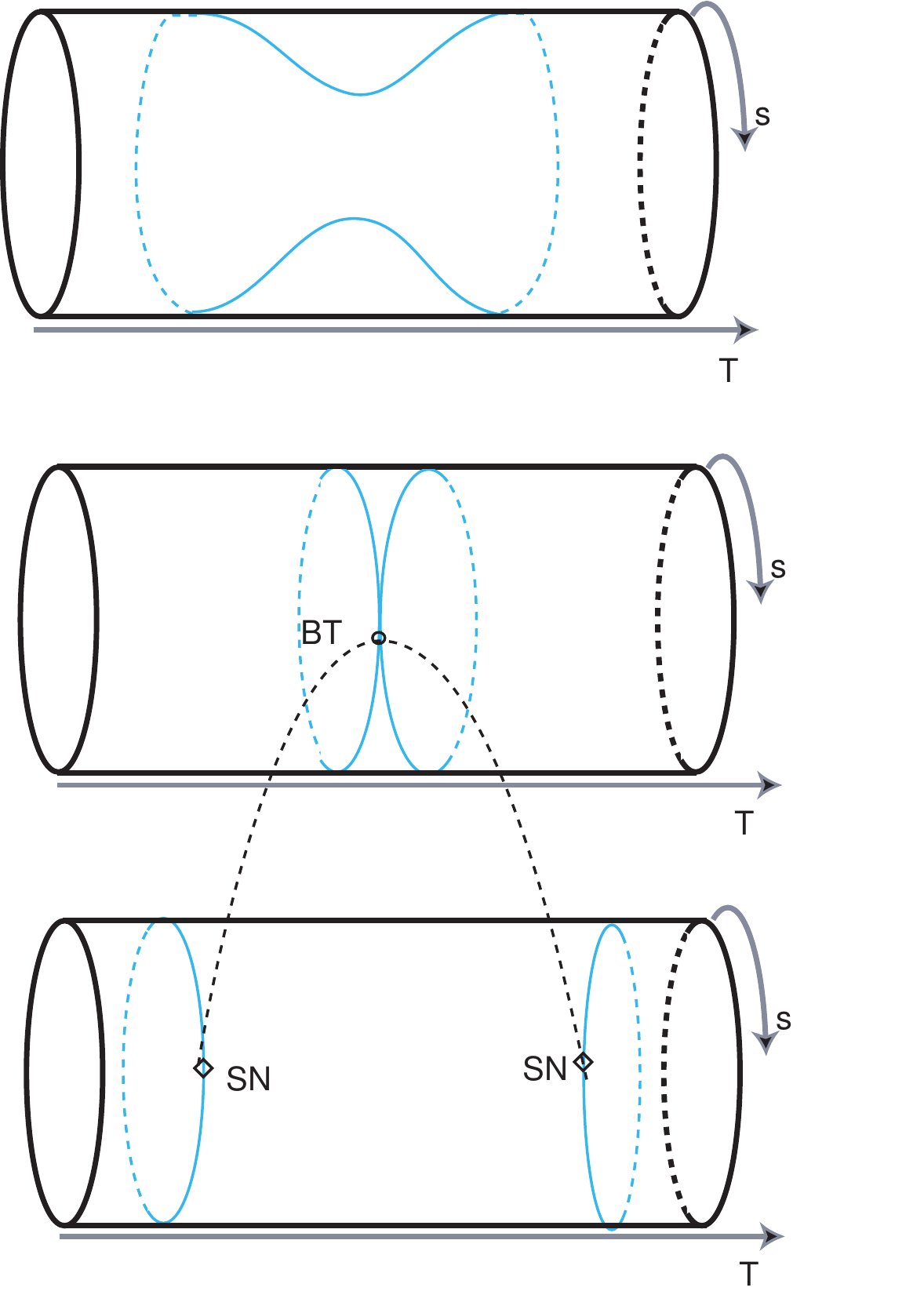}
\end{center}
\caption{\small Bifurcation diagram of fixed points of $\GG(s,y)$ on the cylinder $(T,s)$, with $s\in\RR \pmod{\pi/\omega} $, in the transition from case (2) to case (3) of Theorem~\ref{teoremaPeriodicasL}. 
When $\gamma$ increases, saddle-node bifurcation points SN  come together at a point BT of discrete-time  Bogdanov-Takens bifurcation \cite{BRS,Yagasaki}. A line of  Hopf bifurcation  points also arises here, creating an invariant circle on the cylinder, and a region of chaotic dynamics appears further on.} 
\label{figBT}
\end{figure}

The situation described  in Proposition~\ref{teoremaBT} 
 occurs on the boundaries between the regions of {Theorem~\ref{teoremaPeriodicasL}. 
Geometrically what is happening is that  two solution branches come together as in Figure~\ref{figBT}.
This indicates a bifurcation  of codimension 2 --- corresponding to a curve in the 3-dimensional parameter space $(T,k_1,\gamma)$,
where we expect to find a discrete-time Bogdanov-Takens bifurcation, described in \cite{BRS,Yagasaki} (see Figure~\ref{figBTdiscr}).
This bifurcation occurs at points where 1 is a double eigenvalue, where the derivative is not the identity and where the map  also satisfies some more complicated non degeneracy conditions on the nonlinear part.
Instead of verifying these additional conditions, we check that the linear conditions for nearby local bifurcations arise in a form consistent with the versal unfolding of the discrete-time Bogdanov-Takens bifurcation.
Around this bifurcation, by the results of \cite{BRS,Yagasaki}, we expect to find the following codimension one bifurcations (see Figure~\ref{figBTsurface}):
\begin{enumerate}
\renewcommand{\theenumi}{(\alph{enumi})}
\renewcommand{\labelenumi}{{\theenumi}}
\item\label{saddlenodes} a surface of saddle-node bifurcations;
\item\label{Hopf} a surface of Hopf bifurcations;
\item\label{homoclinic} two surfaces of  homoclinic tangencies.
\end{enumerate}

\begin{figure}
\begin{center}
\includegraphics[width=8cm]{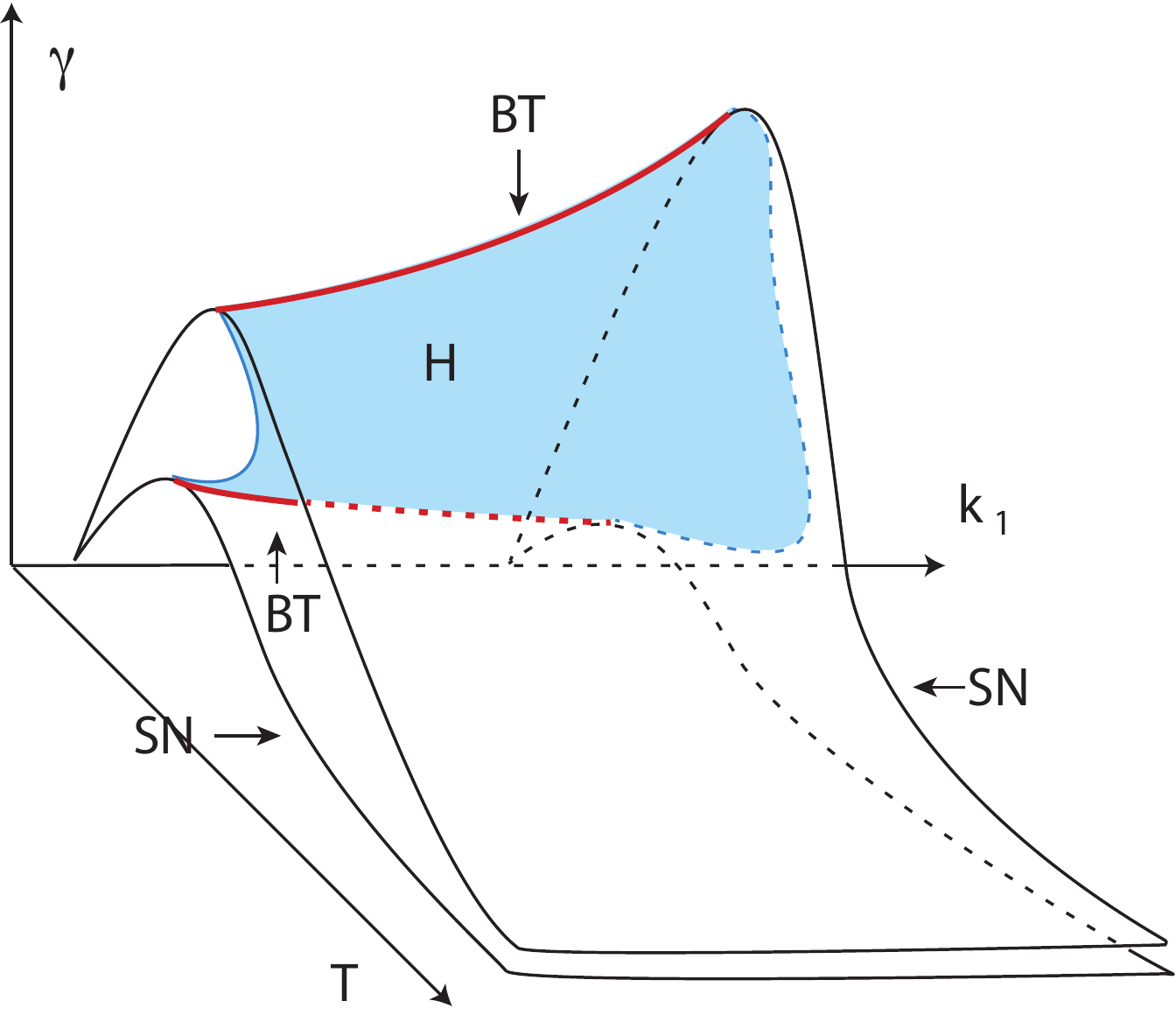}
\end{center}
\caption{\small Schematic representation of the bifurcations of  fixed points of $\GG(s,y)$  around the discrete-time Bogdanov-Takens, with $k_1<1$:   the surfaces  \ref{saddlenodes} of saddle-nodes SN (white)  and   the surface \ref{Hopf}  of Hopf bifurcations H (blue) coming out  of the red lines $\gamma=M/(1-k_1)$  and $\gamma=M/(1+k_1)$ of Bogdanov-Takens bifurcation points BT. } 
%
%
\label{figBTsurface}
\end{figure}


The surfaces of saddle-node bifurcations  in \ref{saddlenodes} have been described in Corollary~\ref{Cor_D}.
Numerical plots of these curves  are shown 
in Figure~\ref{NumericHopf} as curves in $(T,\gamma)$ planes, for fixed values of $k_1$.
The stability of bifurcating solutions is discussed below in \S~\ref{secStability}.

\begin{figure}[ht]
\begin{center}
\includegraphics[width=17cm]{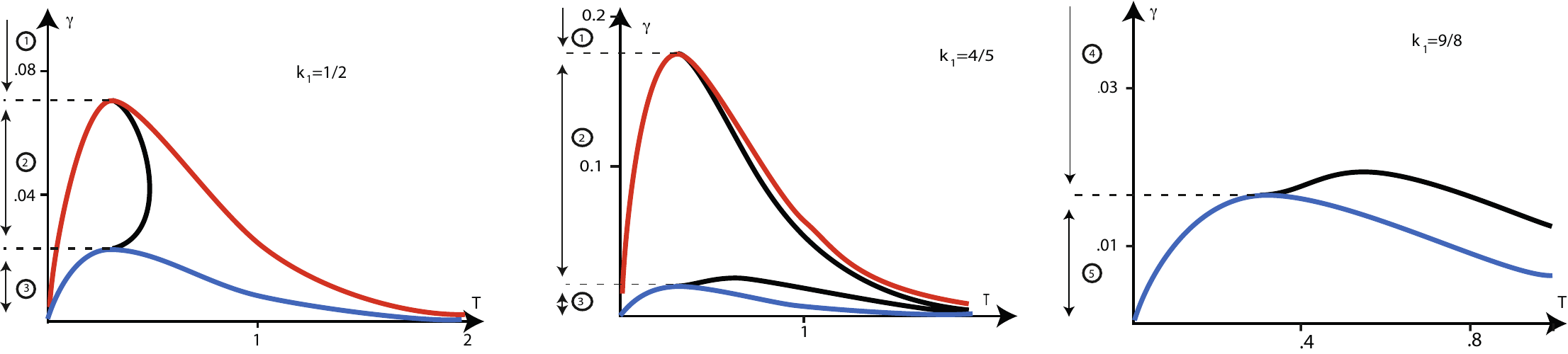}
\end{center}
\caption{\small Numerical plot (using Maxima) of the  lines of Hopf (black) and saddle-node (red and blue) 
bifurcation  of fixed points of $\GG(s,y)$, for different values of $k_1$.
The lines of Hopf bifurcation terminate where they meet one of the lines of saddle-nodes, at a point of discrete-time Bogdanov-Takens bifurcation.
Remaining parameters are $K=3$, $\omega=1$ and $\delta=1.1$. Numbers inside circles near the $\gamma$ axis indicate the region of  Theorem~\ref{teoremaPeriodicasL} where $(k_1,\gamma)$ lies. }
\label{NumericHopf}
\end{figure}

The points \ref{Hopf} of Hopf bifurcation in Figure~\ref{NumericHopf} were determined numerically for fixed $k_1$, as a curve in the $(T,\gamma)$ plane, using the conditions $\tr D\GG{}(s,y)\in(-2,2)$ and $\det D\GG{}(s,y)=1$.
For the first condition, we use $\tr D\GG{}(s,y)=\delta y^{\delta-1}+1>0$ and since $y=e^{-KT}$, we get $\tr D\GG{}(s,y)<2$ if and only if $T>\frac{\ln \delta}{K(\delta-1)}$.
The second condition expands to 
$$
Ke^{-KT}\left(1-\delta e^{-K(\delta-1)T}\right)=2\omega\gamma k_1\cos (2\omega s)
$$
or, equivalently,
$$
-\frac{dF(T)}{dt}=\frac{d\phi_\omega}{ds}
\quad\mbox{and}\quad
F(T)=\phi_\omega(s)=\gamma(1+k_1\sin (2\omega s)).
$$
Writing $C=k_1\cos (2\omega s)=-\frac{dF(T)}{dt}\frac{1}{2\omega\gamma}$ we obtain
$k_1\sin (2\omega s)=\sqrt{k_1^2-C^2}$ and hence 
$$
\frac{F(T)}{\gamma}-1=\sqrt{k_1^2-\frac{1}{4\omega^2 \gamma^2}\left(\frac{dF(T)}{dT}\right)^2}.
$$
This shows that $D\GG{}(s,y)$ has non real eigenvalues on the unit circle if and only if
$$
T>T_M\qquad\mbox{and}\qquad
\left(F(T)-\gamma\right)^2-k_1^2\gamma^2+\frac{1}{4\omega^2}\left(\frac{dF(T)}{dT}\right)^2=0 .
$$

The two surfaces \ref{homoclinic}  correspond to bifurcations at which the stable and unstable manifolds of a saddle point are tangent.
In the region between these surfaces there is a transverse intersection of the stable and the unstable manifolds of the saddle.
In Figure \ref{figBTdiscr}, for simplicity, we only show one intersection of  the stable and the unstable manifolds of the saddle, but these intersections are repeated at an orbit that accumulates on the saddle in forward and backward times.
Around the transverse intersection of the manifolds, horseshoe dynamics occurs. 
The distance between the two bifurcation curves is exponentially small with respect to $\sqrt{\| (k_1,\gamma)\|}$ and the invariant manifolds intersect inside the parameter region between the curves and do not intersect outside it.  This configuration implies 
that the dynamics of $G$ is equivalent to Smale's horseshoe.

\subsection{Stability of solutions}
\label{secStability}
A pair of  fixed points of $\GG(s,y)$, solutions  of \eqref{eqPeriodicSolutions}, bifurcate  at the saddle-nodes of {Corollary~\ref{Cor_D}}. We denote their first coordinate by $s_*<s_\ds$. 
Taking $s_*,s_\ds\in[0,\pi/\omega]$, this order completely identifies each solution.

\begin{proposition}\label{teoremaEstabilidade}
The solutions $(s_*,y_*)$ and $(s_\ds,y_\ds)$ with $s_*<s_\ds\in[0,\pi/\omega]$, of \eqref{eqPeriodicSolutions} created at the saddle-nodes of Corollary~\ref{Cor_D} bifurcate  with the following stability assignments (see Figure~\ref{figStability}):
\bigbreak
\begin{center}
\begin{tabular}{||c||l|l||l|l|l|l||l|l||}\hline
region	&	\multicolumn{2}{c||}{(2)}			&	\multicolumn{4}{c||}{(3)}					&	\multicolumn{2}{c||}{(5)}					\\ \hline
branch$\backslash$saddle-node	&	$T_1$	&	$T_2$	&	$T_1$	&	$T_2$	&	$T_3$	&	$T_4$	&	$T_1$	&	$T_2$	\\ \hline
$s_\ds$	&	source	&	sink	&	source	&	saddle	&	saddle	&	sink	&	source	&	sink	\\ \hline
$s_*$	&	saddle	&	saddle	&	saddle	&	source	&	sink	&	saddle	&	saddle	&	saddle	\\ \hline\end{tabular}
\end{center}
\bigbreak
\end{proposition}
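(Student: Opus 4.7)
The plan is to compute the eigenvalues of the Jacobian $D\GG(s,y)$ on each of the two bifurcating branches at every saddle-node of Corollary~\ref{Cor_D} and to read off the stability type. The linearisation was already set up in the proof of Proposition~\ref{teoremaBT}:
$$D\GG(s,y)=\left(\begin{array}{lr}1&-\dfrac{1}{Ky}\\ &\\ 2\omega\gamma k_1\cos(2\omega s)&\delta y^{\delta-1}\end{array}\right),$$
so $\tr D\GG=1+a$ and $\det D\GG=a+b$, with $a=\delta y^{\delta-1}$ and $b=\dfrac{2\omega\gamma k_1\cos(2\omega s)}{Ky}$.

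First I would record that at a saddle-node the condition $\cos(2\omega s_N)=0$ makes $D\GG$ triangular, with eigenvalues $1$ and $\mu_2=a=\delta e^{-(\delta-1)KT}$. By \eqref{dAdT} and Lemma~\ref{lemaF}, $\mu_2>1$ precisely when $T<T_M$ and $\mu_2<1$ when $T>T_M$, the equality $\mu_2=1$ being confined to the Bogdanov-Takens point of Proposition~\ref{teoremaBT}, which is excluded here. In particular the sign of $\mu_2-1$ is constant on each saddle-node branch and is determined solely by whether $T_j$ lies to the left or right of $T_M$; this information is read directly from Theorem~\ref{teoremaPeriodicasL}.

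Next I would perturb off the saddle-node. A first-order expansion of the characteristic polynomial $\mu^2-(1+a)\mu+(a+b)=0$ about $b=0$, valid because $a\neq 1$, gives
$$\mu_1=1+\frac{b}{a-1}+O(b^2),\qquad \mu_2=a-\frac{b}{a-1}+O(b^2),$$
so $\mu_1-1$ has the sign of $\cos(2\omega s)\,\operatorname{sgn}(a-1)$ while $\mu_2$ stays on the same side of $1$ as at the bifurcation. Since $\mu_2>0$, the classification of each branch as source, sink or saddle is then completely fixed by the signs of $a-1$ and of $\cos(2\omega s)$ on that branch.

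Finally I would determine these cosine signs geometrically. At a saddle-node with $F(T_j)=\gamma(1-k_1)$ the tangency occurs at $s_N=3\pi/(4\omega)$, so the pair $s_*<s_\ds$ satisfies $\cos(2\omega s_*)<0<\cos(2\omega s_\ds)$; at a saddle-node with $F(T_j)=\gamma(1+k_1)$ the tangency is at $s_N=\pi/(4\omega)$ and the inequalities reverse. Cross-referencing these local data with the list of $T_j$ in Theorem~\ref{teoremaPeriodicasL} yields one row of the table at a time. The main obstacle I expect is bookkeeping rather than analysis: one must verify, especially in regions where $k_1>1$ or where $c(T)=(F(T)-\gamma)/(\gamma k_1)$ changes sign along a branch (so that a branch wraps through $s\equiv 0\pmod{\pi/\omega}$ on $\mathcal{C}$), that the labels $s_*,s_\ds$ are consistently attached to the correct local branch near each saddle-node; the perturbation computation itself is routine.
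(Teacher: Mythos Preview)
Your plan is essentially the paper's: compute the triangular Jacobian at each saddle-node to get $\mu_1=1$, $\mu_2=\delta y^{\delta-1}$, decide $\mu_2\gtrless 1$ from $T_N\lessgtr T_M$, then perturb in $\cos(2\omega s)$ to locate the eigenvalue near $1$. The paper phrases that last step via trace and determinant; your first-order formula $\mu_1\approx 1+b/(a-1)$ is the same computation.

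The gap is in the bookkeeping you call routine. Your cosine rule and perturbation formula are correct, but carrying them through region~(5) does \emph{not} reproduce the stated table. Both saddle-nodes there occur at $\sin(2\omega s_N)=+1$, so by your own rule $\cos(2\omega s_*)>0>\cos(2\omega s_\ds)$; with $a<1$ at $T_2$ this gives $\mu_1<1$ at $s_*$ (sink) and $\mu_1>1$ at $s_\ds$ (saddle), and with $a>1$ at $T_1$ it gives $s_*$ source, $s_\ds$ saddle --- the transpose of the stated entries. A direct check (e.g.\ $\delta=2$, $K=\omega=1$, $k_1=2$, $\gamma=0.05$, $T\approx 1.7$) confirms your version. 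The discrepancy traces to the paper's own inference ``$\det D\GG$ is smaller at $s_\ds$ $\Rightarrow$ sink'': with trace fixed at $1+a$ and $a<1$, lowering the determinant below its bifurcation value $a$ makes $p(1)=\det-a<0$, so $1$ lies \emph{between} the two roots and the point is a saddle, not a sink. Your method is sound and matches the table in regions~(2) and~(3); in region~(5) it contradicts, rather than proves, the entries as written.
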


\begin{proof}
The stability of  solutions of \eqref{eqPeriodicSolutions} is obtained from the eigenvalues of the derivative
$D\GG{} (s,y)$. 
They are easier to compute at the saddle-node  points $\left(s_N,y(T_N)\right)$, $N=1,\ldots,4$, where they are 
$\mu_1=1$ and $\mu_2=\delta y^{\delta-1}>0$, as in the beginning of the proof of Proposition~\ref{teoremaBT}.

\begin{figure}[ht]
\begin{center}
\includegraphics[width=10cm]{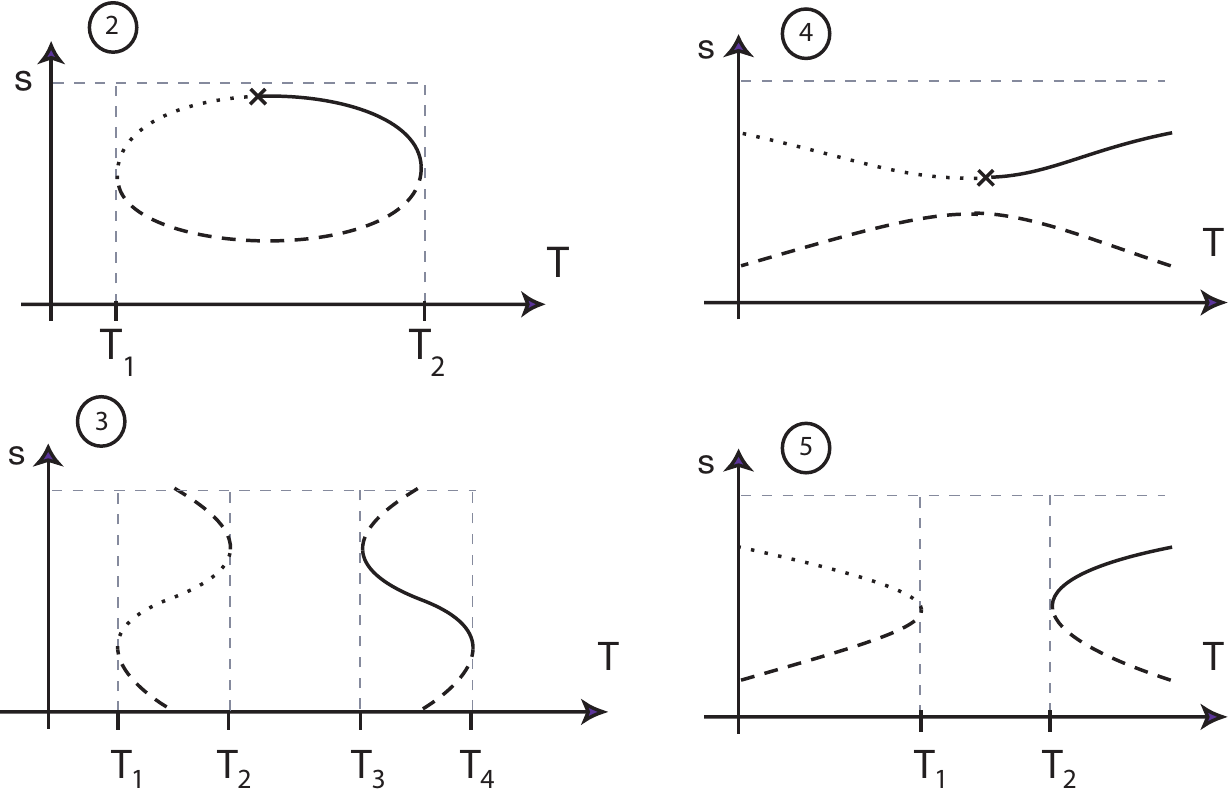}
\end{center}
\caption{\small Schematic stability of  fixed points of $\GG(s,y)$, solutions  of \eqref{eqPeriodicSolutions}, in regions (2), (3), (4)  and (5) of Theorem~\ref{teoremaPeriodicasL}. Conventions: solid lines are sinks, dashed lines are saddles, dotted lines are sources. Recall that $y$ is a decreasing function of $T$.
In region (2) the upper branch must undergo a bifurcation, indicated by an  X, probably the Hopf bifurcation of Figure~\ref{NumericHopf}.  
Stability assignments for region (4) are obtained by continuing those of regions (2) and (5).}
 \label{figStability}
\end{figure}
In region (5) there are two saddle-node points at $T_1<T_M<T_2$.
At $T_1$ we have   $\mu_2>1$, so the solutions that bifurcate from this point are unstable.
The eigenvalue is  $\mu_2<1$ at $T_2$     and the stability of the bifurcating solutions is determined by the other eigenvalue, $\mu_1$.
First note that the trace of the Jacobian $\tr D\GG{} (s,y)=1+\delta y^{\delta-1}=\mu_1+\mu_2$ does not depend on $s$, hence it has the same value on the two bifurcating branches.
This is not true of the determinant of the Jacobian, given by 
$$\det D\GG{} (s,y)=\delta y^{\delta-1}+(\gamma k_1 \omega\cos(2\omega s))/K y =\mu_1\mu_2.$$
On the other hand, the tangency of $F(T_2)$ to the graph of $\phi_\omega(s)$ occurs when $\sin(2\omega s)=+1$ and thus around this point $\cos(2\omega s)$ decreases with $s$.
Therefore $\det D\GG{} (s,y)$ is smaller at $(s_\ds,y_\ds)$ and these points are sinks whereas the points $(s_*,y_*)$ are saddles.
A similar reasoning shows that for the solutions with $T<T_M$, the points  $(s_\ds,y_\ds)$ are sources and $(s_*,y_*)$ are saddles, as in Figure~\ref{figStability}.

Applying the reasoning above to region (2)  shows that at $T_1$ the saddle-node bifurcation yields  sources at $s_\ds$ and  saddles at $s_*$. 
At $T_2$ one would get  of sinks at $s_\ds$ and  saddles at $s_*$. 
Since there is only one top  branch, there must be some additional bifurcation along it.

In region (3) there are four saddle-node points at  $T_1<T_2<T_M<T_3<T_4$, see Figure~\ref{figStability}.
The  arguments used to discuss case (5) show that for $T\in(T_3,T_4)$ there is a branch of sinks and a branch of saddles,
whereas the branches with $T\in(T_1,T_2)$ are of saddles and of sources.
Note that  branch of  saddles $(s_\ds,y_\ds)$ that bifurcates at $T_3$ arives at $T_4$ as $(s_*,y_*)$ by going around the cylinder (see  Figure~\ref{figStability}). The same happens at the branch of saddles that bifurcate at  $T_1$ and $T_2$.
\end{proof}

In region (2) we expect a Hopf bifurcation to occur on the branch $s_\ds$.
The transition between regions (2) and (3) is a Bogdanov-Takens bifurcation where two saddle-node branches and Hopf bifurcation points come together at the same point.

In region (4) there are no saddle-nodes, so it is more difficult to assign stabilities,
 but it is reasonable to assume they are consistent with those of regions (2) and (5) as in Figure~\ref{figStability}.
In particular, the transition from (2) to (4) consists of $T_1\to 0$ and $T_2\to\infty$, so we expect the Hopf bifurcation on the branch $s_\ds$ to remain. This would imply a  Bogdanov-Takens bifurcation in the transition from (5) to (4).

\subsection{Frequency locking --- proof of Theorem~\ref{ThF_L}}\label{subSecFL}
The fixed points of  $\GG(s,y)$, solutions  of \eqref{eqPeriodicSolutions}, 
are fixed points of $G$ in the cylinder whenever the point 
$(s+T,y)$ coincides  with  $(s,y)$ and this happens when $T$ is an integer multiple of $\pi/\omega$.
These points correspond to  periodic solutions of the periodically forced equation \eqref{general} whose period is locked to the 
 external forcing.

\begin{definition}
A periodic solution of a periodically forced differential equation 
is said to be {\em frequency locked} if its period is an integer multiple of the period of the external forcing.
\end{definition}
We discuss here the frequency locked  solutions of \eqref{eqPeriodicSolutions}.
Theorem~\ref{teoremaPeriodicasL} shows that if $\gamma(1-k_1)<M$ then for each forcing frequency $\omega>0$ there exist at least two branches of periodic solutions to \eqref{eqPeriodicSolutions}.
These branches are curves in $(s,y,T)$-space, each point in a curve being an initial value giving rise to a  solution with  a different return time $T$.
Looking for frequency locked solutions corresponds to picking on each branch the solution that has the required value of $T$, and ignoring the others,  as shown in Figure~\ref{figFrequencyLocked}.
This proves the first statement in Theorem~\ref{ThF_L}.

\begin{figure}
\begin{center}
\includegraphics[width=8cm]{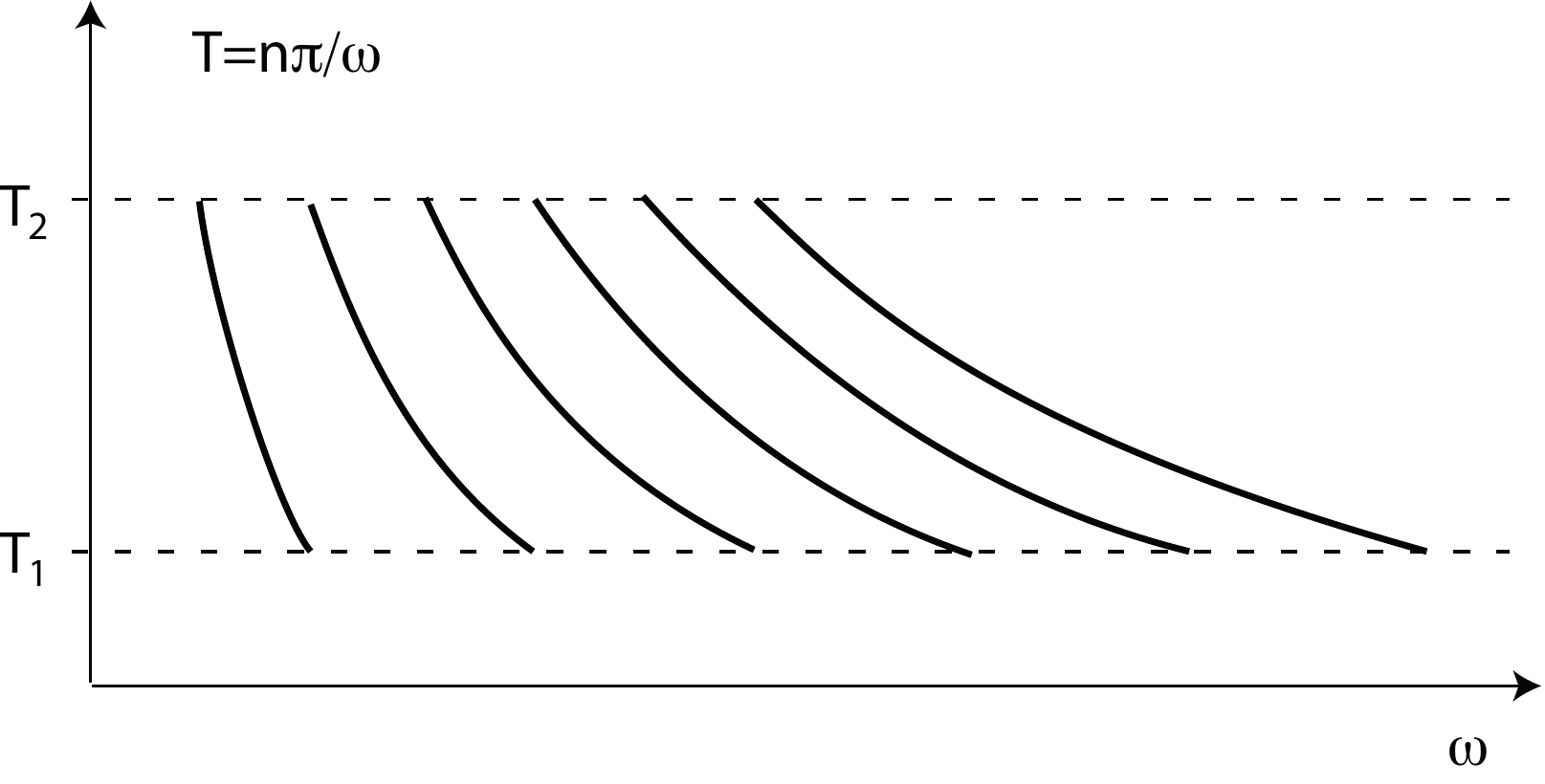}
\end{center}
\caption{Periods of frequency locked solutions to \eqref{eqPeriodicSolutions} in region (2) of 
Theorem~\ref{teoremaPeriodicasL}. For each point in one of the curves there are two periodic solutions of this period.}
\label{figFrequencyLocked}
\end{figure}

A similar reasoning may be  applied  to  invariant  sets for the problem \eqref{eqPeriodicSolutions}.
When $T$ is an integer multiple of the forcing period,  a $\GG$-invariant set corresponds to a $G$-invariant set on the cylinder, that may be lifted to a flow-invariant set for the  periodically forced differential equation. 
We will also say that these sets and their lifts are {\em frequency locked}.
The simplest example are the invariant  closed curves arising either  in Hopf bifurcations or in homoclinic tangencies, for the problem \eqref{eqPeriodicSolutions}.
These bifurcation values are denoted $ T_{H_1}$ and $T_{H_2}$ in the first part of the next result.
A more complicated example arises between two homoclinic tangencies, denoted $T_{h_1}$ and $T_{h_2}$ in the next corollary,
where  a  transverse homoclinic connection creates  chaotic dynamics nearby.

\begin{proposition}\label{corollaryInvariantCurve}
For the values of $(k_1,\gamma)$ and of $T\in\left( T_{H_1},T_{H_2}\right)$ where there is a closed  curve, invariant under the map $\GG(s,y)$,
it follows that for $\dpt\omega\in\left({n\pi}/{T_{H_2}},{n\pi}/{T_{H_1}}\right)$,  $n\in\NN$,  there is a $G$-invariant curve on the cylinder that corresponds to a frequency locked invariant torus for \eqref{general}.

Similarly, when for $T\in\left( T_{h_1},T_{h_2}\right)$  the map 
$\GG(s,y)$
has an invariant set with dynamics conjugate to a shift on a finite number of symbols, then  for $\dpt\omega\in\left({n\pi}/{T_{h_2}},{n\pi}/{T_{h_1}}\right)$,  $n\in\NN$, there is a frequency locked suspended horseshoe for \eqref{general}.
\end{proposition}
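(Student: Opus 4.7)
The plan is to transfer invariant sets of the auxiliary map $\GG(s,y) = G(s,y) - (T,0)$ to invariant sets of $G$ on the cylinder $\mathcal{C}$, and then to suspend these to flow-invariant objects for \eqref{general}. Working on the universal cover $\RR \times (0,\varepsilon)$ of $\mathcal{C}$, one has $G(s,y) = \GG(s,y) + (T,0)$, so any $\GG$-invariant subset $\mathcal{I}$ satisfies $G(\mathcal{I}) = \mathcal{I} + (T,0)$. The translation by $(T,0)$ becomes the identity after the cylinder identification $s \sim s + \pi/\omega$ precisely when $T$ is an integer multiple of $\pi/\omega$. Thus, choosing $\omega = n\pi/T$ for some $n \in \NN$ guarantees that the projection of $\mathcal{I}$ to $\mathcal{C}$ is $G$-invariant.

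For the closed-curve case I fix $n \in \NN$ and let $T$ range over $(T_{H_1},T_{H_2})$. The map $T \mapsto n\pi/T$ is a decreasing homeomorphism onto $(n\pi/T_{H_2}, n\pi/T_{H_1})$, so every $\omega$ in the latter interval corresponds to some $T$ in the former for which the $\GG$-invariant closed curve descends to a $G$-invariant closed curve on $\mathcal{C}$. Since $G$ is the Poincar\'e first-return map of the autonomised flow of \eqref{general} on $\RR/(\pi/\omega)\ZZ \times \RR^3$ to the cross-section $In(\vv)$, the suspension of a $G$-invariant simple closed curve on $\mathcal{C}$ is a flow-invariant $2$-torus whose return time equals $n$ periods of the forcing --- this is exactly the frequency-locking condition. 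The horseshoe assertion follows by the same mechanism: the $\GG$-invariant set conjugate to a shift on finitely many symbols, which exists between the homoclinic tangency surfaces of the Bogdanov--Takens unfolding of Figure~\ref{figBTdiscr}, descends to a $G$-invariant hyperbolic set on $\mathcal{C}$ for $\omega \in (n\pi/T_{h_2}, n\pi/T_{h_1})$, and its suspension by a smooth flow is the claimed frequency-locked suspended horseshoe.

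The argument rests on a single elementary observation --- the translation identity relating $G$ and $\GG$ --- and I anticipate no serious obstacle. The one technical point worth noting is that, as $T$ varies and $\omega = n\pi/T$ is adjusted accordingly, the dependence of $\GG$ on $\omega$ through $\sin(2\omega s)$ must not destroy either the invariant curve or the hyperbolic set. This follows from the structural stability of these objects in the open parameter regions of the Bogdanov--Takens unfolding, away from the saddle-node, Hopf, and homoclinic tangency loci. Suspension of a uniformly hyperbolic set by a smooth flow is itself hyperbolic and the symbolic conjugacy lifts intact, so the only item requiring any care is to note that the frequency-locked invariant torus and horseshoe are connected components of the suspension chosen coherently as $T$ (and hence $\omega$) varies within the prescribed intervals.
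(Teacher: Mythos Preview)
Your proposal is correct and follows essentially the same approach as the paper: the paper's argument, given in the paragraph immediately preceding the proposition, is simply that when $T$ is an integer multiple of the forcing period $\pi/\omega$, any $\GG$-invariant set becomes a $G$-invariant set on the cylinder, which then lifts to a flow-invariant set for \eqref{general}. Your write-up spells this out with more care, and your observation about the $\omega$-dependence of $\GG$ through $\sin(2\omega s)$ is a technical point the paper does not explicitly address; your appeal to structural stability of the invariant curve and the hyperbolic set in the open Bogdanov--Takens regions is a reasonable way to close that gap.
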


In order to complete the proof of Theorem~\ref{ThF_L}, we show
that there is no gain in looking for different multiples $n\pi/\omega$, $n\in\NN$ of the period, because we obtain essentially the same solution for all $n$.
To do this, we want to solve:
\begin{equation}\label{eqPeriodicSolutionsFL}
G(s,y)=
\left(s-K\ln y,
y^{\delta} +\gamma\left(1+k_1\sin(2\omega s)\right)\right)=
\left(s+\frac{n\pi}{\omega},y\right)
\end{equation}
 Solving the first component   we get  $y$ as a function of $\omega\in \RR^+$:
\begin{equation}
\label{y1}
y(\omega)= e^{\frac{- \widehat{K}n}{\omega}}
 \qquad\mbox{where}\qquad
 \widehat{K}=\frac{\pi(\alpha+\beta)^2}{2\alpha}>0.
 \end{equation}
Let
$$
F_n(\omega)= e^{\frac{-\widehat{K}n}{\omega}}-e^{\frac{-\delta \widehat{K}n}{\omega}}.
$$
 In order to find the periodic solutions satisfying \eqref{eqPeriodicSolutions}, we need to solve  for $(s,\omega)$ the expression
\begin{equation}\label{eqnFL}
F_n(\omega)=\phi_\omega( s )
\quad\mbox{ where } \quad
\phi_\omega( s )=\gamma( 1+k_1 \sin (2\omega s)).
\end{equation}
This allows us to relate the frequency locked solutions of \eqref{general} for different frequencies $\pi/n\omega$.
\begin{proposition}\label{propMultipleFrequencies}
The pair $(s_1,\omega_1)$ is a solution of \eqref{eqnFL} for $n=1$ 
if and only if  
$\displaystyle(s_n,\omega_n)=\left(\frac{s_1}{n},n\omega_1 \right)$ is a solution of \eqref{eqnFL} for arbitrary $n$.
This implies that the pair $(s_1,y)$ is a  fixed point of $G$ in the cylinder, corresponding to a periodic solution of 
\eqref{general} with period $\pi/\omega$  if and only if
$\left(\frac{s_1}{n},y\right)$ is a fixed point of $G$ in the cylinder, corresponding to a periodic solution of 
\eqref{general} with period $\pi/n\omega$ for arbitrary $n\in\NN$.
\end{proposition}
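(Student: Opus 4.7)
The plan is to reduce the equivalence to a direct algebraic identity obtained from the explicit form of $F_n(\omega)$ and $\phi_\omega(s)$. First I would recall that a frequency locked periodic solution of \eqref{general} with period $\pi/\omega$ corresponds to a fixed point of $G$ on the cylinder where the first coordinate advances by exactly $\pi/\omega$ (one loop around) while the $y$-coordinate returns to its original value. Equation \eqref{eqPeriodicSolutionsFL} encodes precisely this: the first component forces $y=e^{-\widehat{K}n/\omega}$ via \eqref{y1}, and the second component reduces to $F_n(\omega)=\phi_\omega(s)$, i.e.\ \eqref{eqnFL}.

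The heart of the argument is the observation that both $F_n$ and $\phi_\omega$ are invariant under the joint substitution $(s,\omega)\mapsto(s/n,n\omega)$. Concretely, I would check the two identities
\[
F_n(n\omega_1) \;=\; e^{-\widehat{K}n/(n\omega_1)}-e^{-\delta\widehat{K}n/(n\omega_1)} \;=\; e^{-\widehat{K}/\omega_1}-e^{-\delta\widehat{K}/\omega_1} \;=\; F_1(\omega_1),
\]
\[
\phi_{n\omega_1}(s_1/n) \;=\; \gamma\bigl(1+k_1\sin(2\,n\omega_1\cdot s_1/n)\bigr) \;=\; \gamma\bigl(1+k_1\sin(2\omega_1 s_1)\bigr) \;=\; \phi_{\omega_1}(s_1).
\]
Hence the equation $F_n(\omega_n)=\phi_{\omega_n}(s_n)$ at $(s_n,\omega_n)=(s_1/n,n\omega_1)$ coincides term by term with $F_1(\omega_1)=\phi_{\omega_1}(s_1)$, giving the claimed equivalence in both directions. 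The same substitution leaves the $y$-coordinate invariant, since
\[
y(n\omega_1) \;=\; e^{-\widehat{K}n/(n\omega_1)} \;=\; e^{-\widehat{K}/\omega_1} \;=\; y(\omega_1),
\]
so the value of $y$ is shared between the two fixed points.

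To obtain the dynamical consequence I would translate back to the full fixed-point condition on the cylinder. The two previous computations together show that $(s_1,y)$ satisfies $G(s_1,y)=(s_1+\pi/\omega_1,y)$ on the cylinder $s\in\RR\pmod{\pi/\omega_1}$ if and only if $(s_1/n,y)$ satisfies $G(s_1/n,y)=(s_1/n+\pi/(n\omega_1),y)$ on the cylinder $s\in\RR\pmod{\pi/(n\omega_1)}$, which is precisely the statement that the first is a periodic solution of \eqref{general} of period $\pi/\omega_1$ while the second is a periodic solution of period $\pi/(n\omega_1)$.

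The calculation itself is essentially routine; the main subtlety I anticipate is the bookkeeping about \emph{which} cylinder a given fixed point lives on, since the identification $s\pmod{\pi/\omega}$ changes with $\omega$ and one must check that the first-coordinate equation $s\mapsto s-K\ln y$ really does advance by exactly one period of the new cylinder, not just by some multiple. Once this is settled, the statement is really a scaling symmetry of \eqref{eqnFL} under $(s,\omega)\mapsto(s/n,n\omega)$ with $k_1$ and $\gamma$ fixed, and it implies that searching for fixed points at higher multiples $n\pi/\omega$ of the forcing period produces nothing genuinely new beyond the $n=1$ case after relabelling the frequency.
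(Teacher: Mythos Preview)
Your proposal is correct and follows essentially the same approach as the paper: both verify directly that $F_n(n\omega_1)=F_1(\omega_1)$, that $\phi_{n\omega_1}(s_1/n)=\phi_{\omega_1}(s_1)$, and that the $y$-value is preserved under $(s,\omega)\mapsto(s/n,n\omega)$. Your additional remarks about the cylinder bookkeeping and the dynamical interpretation are sound elaborations but do not depart from the paper's argument.
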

\begin{proof}
That $(s_1,\omega_1)$ is a solution of \eqref{eqnFL} for $n=1$ means
$$ 
F_1(\omega_1)=
e^{\frac{-\widehat{K}}{\omega_1}}-e^{\frac{-\delta \widehat{K}}{\omega_1}}
=\phi_{\omega_1}( s_1 )=
\gamma( 1+k_1 \sin (2\omega_1 s_1)).
$$
For $\omega_n=n\omega_1$ we get
$$
 F_n(\omega_n)=e^{\frac{-\widehat{K}n}{n\omega_1}}-e^{\frac{-\delta \widehat{K}n}{n\omega_1}}
 =F_1(\omega_1).
$$
On the other hand,  $s_n=s_1/n$ yields
$$
\phi_{\omega_n}( s_n )=\gamma\left( 1+k_1 \sin \left(2n\omega_1\frac{ s_1}{n}\right)\right)
=\phi_{\omega_1}( s_1 )
$$
establishing the claim. 
Finally, 
$\dpt y(\omega_n)=e^{\frac{-\widehat{K}n}{n\omega_1}}
=y(\omega_1)$.
\end{proof}

\section{Application of results by other authors}\label{secOtherAuthors}
In this section we compare our results to those obtained by Afraimovith \emph{et al} \cite{AH2002} and by Tsai and Dawes \cite{TD2,TD1} for  similar systems. 
A different system,  that yields the same expression we obtained in Theorem~\ref{Th1}, is analysed in \cite{TD2,TD1}.
Two results imply the existence of attractors in the cylinder ${\mathcal C}$.
The first is obtained by an application of the  Annulus Principle \cite{AH2002} -- see also \cite[\S~4]{Shilnikov et al}. 
The second arises from  the identification of the dynamics of $G(s,y)$ with a discretisation of a forced pendulum with constant torque.
We present the two results and discuss their relation to ours.

\subsection{The  Annulus Principle}\label{annulus_sec}

 \begin{figure}
\begin{center}
\includegraphics[width=7cm]{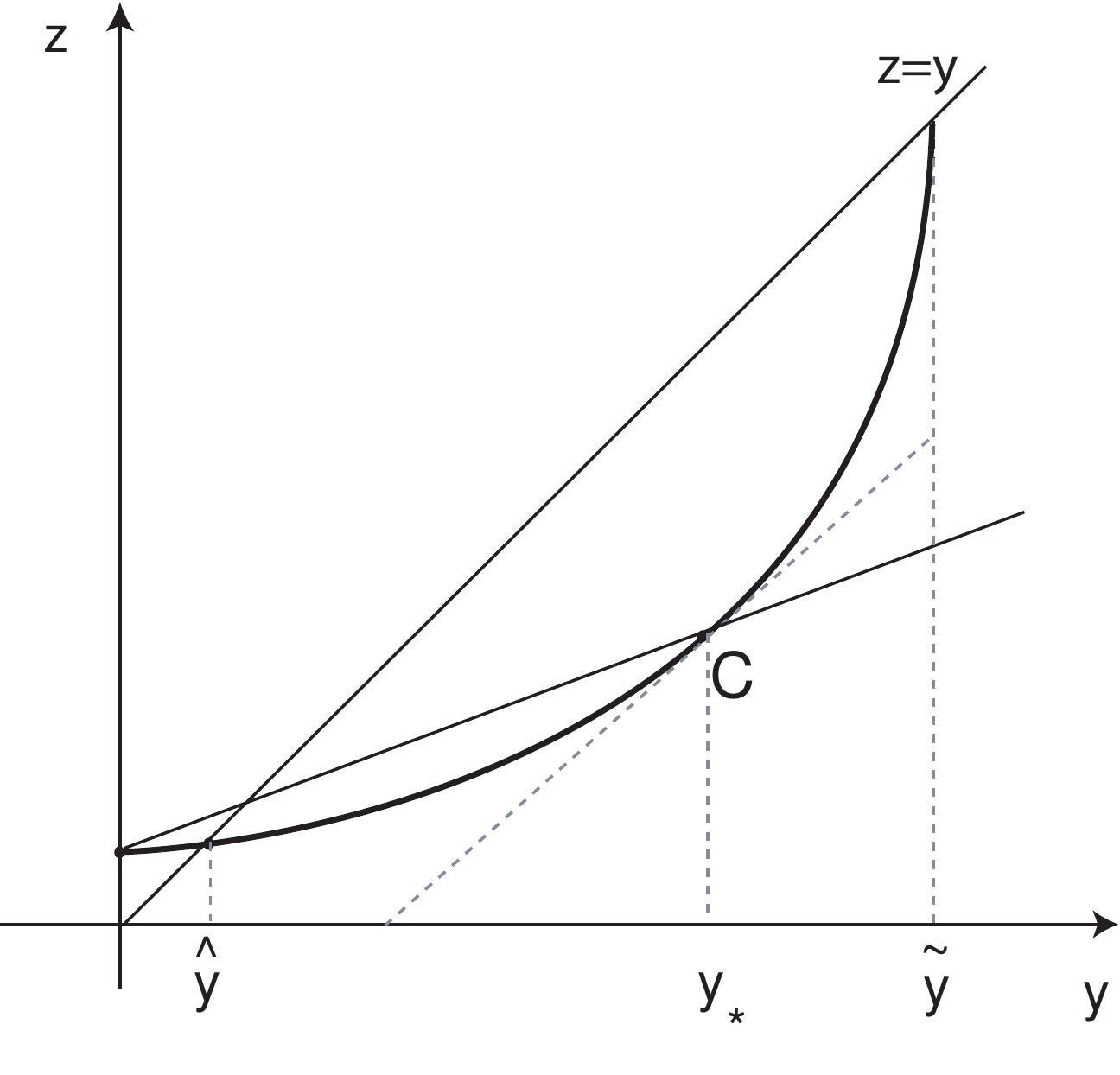}
\end{center}
\caption{\small For $k_1=0$ the second coordinate of the map $G$ only depends on $y$ and, under the conditions of Lemma~\ref{LemmaG2}, its graph has the shape of the   curve above: it is convex near the origin and the point $C$ where $\bg^\prime(y)=1$ lies below the diagonal $z=y$. }
\label{diagonal1}
\end{figure}

In the weakly attracting case, if  $\gamma>0$ is small and $k_1=0$, then
the second coordinate $g_2(s, y)=y^\delta + \gamma=:\bg(y)$ map $G$  only depends on $y$ and not on $s$ and it may be seen as a time averaged simplification of the original $g_2$, analogous to the time averages of the Van der Pol method discussed in \cite[Ch IX]{oscillators}.
Let $y_*=\delta^{\frac{1}{1-\delta}}$ be the point where $\bg'(y_*)=1$.
In this situation, shown in Figure~\ref{diagonal1}, we have:

\begin{lemma}[Tsai and Dawes \cite{TD1}] \label{LemmaG2}
If $\delta>1$ and $0<\gamma <\delta^{\frac{1}{1-\delta}}-\delta^{\frac{\delta}{1-\delta}}=M<1$,   then,
near $y=0$ the map $\bg(y)=y^\delta + \gamma$ has a pair of fixed points $0<\hat{y}<\tilde{y}$, that are respectively 
stable and unstable,
such that
$\dpt 0<\gamma<\hat{y}<y_*=\delta^{\frac{1}{1-\delta}}$ and 
 $\dpt 0<\hat{y}< \frac{\gamma \delta}{\delta-1}$,
also
 $\dpt \tilde{y}> \gamma+\delta^{ \frac{\delta}{1-\delta}}=\bg(y_*)$.
\end{lemma}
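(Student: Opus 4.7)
\medskip

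\noindent\textbf{Proof plan.}
The plan is to recast the fixed point equation $\bg(y)=y$ as $F(y)=\gamma$, where $F(y):=y-y^{\delta}$, and then extract everything from the elementary shape of the graph of $F$ on $[0,1]$. Note that $F$ is essentially the same auxiliary function used in Lemma~\ref{lemaF}, and the argument will parallel that analysis.

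First I would verify that $F:[0,1]\to\RR$ satisfies $F(0)=F(1)=0$, is strictly increasing on $(0,y_*)$ and strictly decreasing on $(y_*,1)$, with unique maximum
$$
F(y_*)=\delta^{\frac{1}{1-\delta}}-\delta^{\frac{\delta}{1-\delta}}=M,
$$
where $y_*=\delta^{\frac{1}{1-\delta}}$ is the root of $F'(y)=1-\delta y^{\delta-1}=0$. Since $0<\gamma<M$, the intermediate value theorem together with the monotonicity of $F$ on each side of $y_*$ produces exactly two solutions $\hat{y}<y_*<\tilde{y}$ of $F(y)=\gamma$, i.e.\ of $\bg(y)=y$.

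For stability I would compute $\bg'(y)=\delta y^{\delta-1}$, which is strictly increasing for $\delta>1$ and satisfies $\bg'(y_*)=1$. Hence $0<\bg'(\hat{y})<1$, so $\hat{y}$ is attracting, while $\bg'(\tilde{y})>1$, so $\tilde{y}$ is repelling.

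The bounds are obtained directly from the fixed point identity $\hat{y}-\hat{y}^{\delta}=\gamma$, that is $\hat{y}(1-\hat{y}^{\delta-1})=\gamma$, combined with the fact that $\hat{y}<y_*$. Since $\hat{y}<y_*$ lies in the range where $F$ is increasing and $\gamma<M<y_*$, evaluating $F$ at $\gamma$ gives $F(\gamma)=\gamma-\gamma^{\delta}<\gamma=F(\hat{y})$, hence $\hat{y}>\gamma$, and by definition $\hat{y}<y_*$. Because $y_*^{\delta-1}=1/\delta$ and $y\mapsto y^{\delta-1}$ is increasing, $\hat{y}^{\delta-1}<1/\delta$, so $1-\hat{y}^{\delta-1}>(\delta-1)/\delta$ and
$$
\hat{y}=\frac{\gamma}{1-\hat{y}^{\delta-1}}<\frac{\gamma\delta}{\delta-1}.
$$
For the last bound, monotonicity of $\bg$ on $(0,\infty)$ and $\tilde{y}>y_*$ yield
$$
\tilde{y}=\bg(\tilde{y})>\bg(y_*)=y_*^{\delta}+\gamma=\gamma+\delta^{\frac{\delta}{1-\delta}}.
$$

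No step here is really an obstacle: the whole argument reduces to a one-variable convexity/monotonicity analysis of $F$ and $\bg$. The only point that requires care is bookkeeping of the exponents to identify $y_*^{\delta-1}=1/\delta$ and $F(y_*)=M$ correctly, so that the bounds on $\hat{y}$ and $\tilde{y}$ follow cleanly from the fixed point equation.
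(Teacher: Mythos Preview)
Your proof is correct. The paper does not supply its own proof of this lemma; it is quoted as a result of Tsai and Dawes \cite{TD1}, with the underlying picture indicated only by Figure~\ref{diagonal1}. Your argument via $F(y)=y-y^{\delta}$ is exactly the analysis of Lemma~\ref{lemaF} transported from the variable $T$ to $y=e^{-KT}$, so it is entirely in the spirit of the surrounding text, and the bounds you derive from $\hat y(1-\hat y^{\delta-1})=\gamma$ together with $\hat y^{\delta-1}<1/\delta$ and the monotonicity of $\bg$ are clean and complete.
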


This lemma allows us to obtain an positively invariant annulus  in the cylinder ${\mathcal C}$  defined as:
$$
\mathcal{A}=\left\{ (s, y):\ \hat{y}-R\gamma \leq y \leq  \hat{y}+R\gamma \qquad \text{and} \qquad 
  y>0,\quad s\in\RR\pmod{\pi/\omega} \right\}
$$
where $R=\dpt\frac{2k_1}{1-\delta \hat{y}^{\delta-1}}>0$. 
The proof is completely analogous to that of Lemma 3.1 in \cite{TD1}.

Now consider the open  set of parameters defined by:
\begin{itemize}
\item $\delta>1$
\item  $0<\gamma <\delta^{\frac{1}{1-\delta}}-\delta^{\frac{\delta}{1-\delta}}=M<1$
\item $\Psi(y)$ satisfies  $\Psi(\hat{y}-R\gamma)> Z$ and  $\Psi(\hat{y}+R\gamma)> Z$ for $Z=4\sqrt{ \omega\alpha \gamma k_1}/(\alpha+\beta)$.
\end{itemize}

Then  the  Annulus Principle \cite{AH2002} yields: 

\begin{theorem}[\cite{AH2002,TD1}]
\label{circulo invarianteL}
For the open set of parameters above,  the maximal attractor for $G $ in the annulus $\mathcal{A}$ is an invariant closed curve not contractible on the cylinder  ${\mathcal C}$,
the graph of a $\pi/\omega$-periodic, $C^1$ function $y=h(s)$.
\end{theorem}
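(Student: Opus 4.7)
The plan is to verify the hypotheses of the Annulus Principle from \cite{AH2002} (see also \cite[\S~4]{Shilnikov et al}) for the map $G$ acting on $\mathcal{A}$, and then read off the conclusion. I would proceed in three steps.

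First, I would check that $\mathcal{A}$ is positively invariant under $G$. Since the cylinder coordinate $s$ is unconstrained in $\mathcal{A}$ and $G$ is $\pi/\omega$-periodic in $s$, it suffices to control the second coordinate. Writing $g_2(s,y)=\bg(y)+\gamma k_1\sin(2\omega s)$ with $\bg(y)=y^\delta+\gamma$ as in Lemma~\ref{LemmaG2}, I have $|g_2(s,y)-\hat y|\le |\bg(y)-\bg(\hat y)|+\gamma k_1$. On the segment $|y-\hat y|\le R\gamma$, the mean value theorem together with $\bg'(\hat y)=\delta\hat y^{\delta-1}<1$ and the monotonicity of $\bg'$ gives $|\bg(y)-\hat y|\le\delta\hat y^{\delta-1}\,R\gamma +O((R\gamma)^2)$. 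The particular choice $R=2k_1/(1-\delta\hat y^{\delta-1})$ makes the resulting estimate $\le R\gamma$, so $g_2(s,y)\in[\hat y-R\gamma,\hat y+R\gamma]$. This is exactly the computation done in \cite[Lemma 3.1]{TD1} and I would follow that argument verbatim, adjusting constants to match our notation.

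Second, I would verify the two quantitative hypotheses of the Annulus Principle: strong contraction transverse to the annulus direction and an invariant cone field of graphs. The Jacobian of $G$ is
\begin{equation*}
DG(s,y)=\begin{pmatrix} 1 & -1/(Ky) \\ 2\omega\gamma k_1\cos(2\omega s) & \delta y^{\delta-1}\end{pmatrix}.
\end{equation*}
Contraction in $y$ amounts to $\delta y^{\delta-1}<1$ uniformly on $\mathcal{A}$, which is guaranteed because Lemma~\ref{LemmaG2} places $\hat y<y_*=\delta^{1/(1-\delta)}$ and $R\gamma$ is $O(\gamma)$ small. The cone condition requires that there exist a Lipschitz constant $L$ for admissible graphs $y=h(s)$ such that the image graph has Lipschitz constant at most $L$; this reduces to a quadratic inequality in $L$ whose discriminant is positive exactly when the assumption $\Psi(\hat y\pm R\gamma)>Z$ holds, with $Z=4\sqrt{\omega\alpha\gamma k_1}/(\alpha+\beta)$ encoding the balance between the contraction rate $\delta y^{\delta-1}$, the expansion $1/(Ky)$ along $s$, and the off-diagonal coupling $2\omega\gamma k_1$ introduced by the forcing.

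Third, once both conditions are in hand, the Annulus Principle produces a unique maximal attractor in $\mathcal{A}$ which is the graph of a $C^1$ function $y=h(s)$, invariant under $G$. The function inherits $\pi/\omega$-periodicity from the $\pi/\omega$-periodicity of $G$ in $s$, and the graph is not contractible on the cylinder $\mathcal{C}$ because it projects onto the whole circle $\RR\pmod{\pi/\omega}$. The main obstacle will be the cone-condition step: translating the slightly opaque hypothesis $\Psi(\hat y\pm R\gamma)>Z$ into the precise form of the invariant-cone inequality used in \cite{AH2002} is the delicate bookkeeping, but the structure is the same as in the closely related map analysed in \cite{TD1}, so I would follow their argument closely and only adapt the exponents and coefficients to the current $K$, $\delta$, $k_1$.
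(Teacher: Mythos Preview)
Your proposal is correct and follows exactly the approach the paper takes: the paper does not give an independent proof but simply states that the theorem follows from the Annulus Principle of \cite{AH2002} once the positive invariance of $\mathcal{A}$ (analogous to \cite[Lemma~3.1]{TD1}) and the listed parameter conditions are verified. Your sketch fills in precisely those verification steps, so there is nothing to add.
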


The attracting closed curve for $G$  gives rise to an attracting torus in the flow associated to $F_\gamma$.

\subsection{Equivalence to a discretisation of a pendulum}\label{secPendulo}
In this section, we  discuss some additional information on the dynamics of $G (s, y)$ around a particular type of periodic solution found in 
Section~\ref{sec 5}.
This consists of an analysis near the middle of the intervals in $y$ where periodic solutions may be found.
The dynamics around these points is similar to the discretisation of a pendulum with friction and torque.
A physical realisation of this equation is described in \cite{Coulet}. 
The results are adapted from \cite{TD1}, we provide explicit expressions, for the sake of comparison with our results.

Let $ s_c $ be such that $\sin(2\omega s_c )=0$.  
If $\GG( s_c ,y_c )=( s_c ,y_c )$
for some $y_c $, then it must satisfy $F(T)=\gamma$ (Figure~\ref{centers1}).
Hence, a necessary condition for the existence of the solution is $\gamma\le M$.
 This situation arises in case (3) of Theorem~\ref{teoremaPeriodicasL} and could also arise in cases (2), (4) and (5).
If $\gamma<M$ then there are two values
$T_*$ for  which $F(T_*)=\gamma$ and for each of these values there are two  solutions. 
Let $( s_c ,y_c )$ denote any of these points, called here \emph{centres of frequency locking},
where $ s_c =n\pi/2\omega$, $n\in\NN$ as in Figure \ref{centers1}. 
Without loss of generality we may take $n\in\{1,2\}$, since we are considering coordinates $s\pmod{\pi/\omega}$.
These solutions correspond to periodic solutions on the cylinder when $T_*=\ell \pi/\omega$, 
$\ell\in\NN$, as discussed in \S~\ref{subSecFL} and illustrated in Figure~\ref{figFrequencyLocked}.
Therefore the centre of frequency locking satisfies $y_c=e^{-KT_*}=e^{-K\ell \pi/\omega}$, $\ell\in\NN$.

\begin{figure}
\begin{center}
\includegraphics[height=5cm]{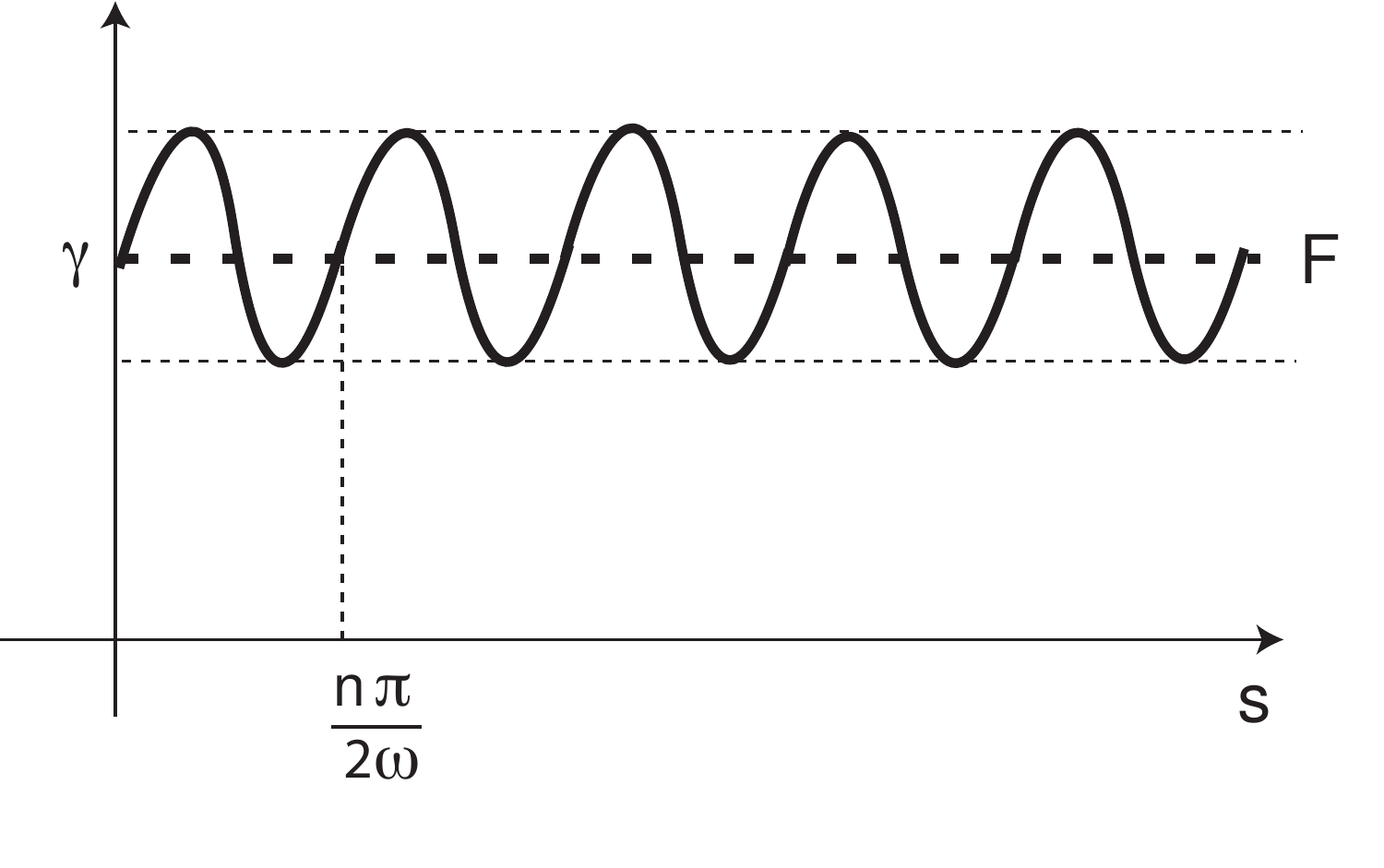}
\end{center}
\caption{\small Centres of  frequency locking.}
\label{centers1}
\end{figure}

The discretisation is obtained in two steps, that we proceed to state:

\begin{lemma}[Tsai and Dawes \cite{TD1}] \label{lemaCentreInterval}
For each $\gamma<M$, $\delta \gtrsim1$ and for $(s,y)$ there is a centre of frequency locking at
$( s_c ,y_c )=\left(n\pi/2\omega,e^{-K\ell \pi/\omega}\right)$, $\ell\in\NN$, $n\in\{1,2\}$, provided $F(\ell \pi/\omega)=\gamma$.
For $(s,y)$ near $( s_c ,y_c )$
the  orbit $(s_{n+1}, y_{n+1})=G (s_n, y_n)$ on the cylinder is approximated by the orbit of:
$$
\left\{
\renewcommand{\arraystretch}{2}
\begin{array}{l}
\dpt x_{n+1}-x_{n}=
\frac{\gamma}{y_c }\left(  -x_{n}+k_1\sin(\theta_n)\right)\\
\dpt \theta_{n+1}- \theta_{n}=
{2\omega}\left(\frac{\ell\pi}{\omega} -\frac{x_n}{K} \right)
\end{array}
\right.
$$
in coordinates $x=({y}/{y_c })-1$ and $\theta=2\omega s$.
\end{lemma}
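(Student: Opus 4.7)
The plan is to perform a local change of variables centred at $(s_c, y_c)$ and then a first-order Taylor expansion of $G$, using the weakly attracting hypothesis $\delta \gtrsim 1$ to bring the linearised map into the pendulum form claimed by the lemma.

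First I would record the algebraic identities that hold at the centre. The hypothesis $F(\ell\pi/\omega)=\gamma$ together with the defining relation $y_c = e^{-T_*/K}$, $T_*=\ell\pi/\omega$, gives $y_c-y_c^{\delta}=\gamma$, and therefore $y_c^{\delta-1}=1-\gamma/y_c$. Because $\sin(2\omega s_c)=0$, the second component of $G$ evaluated at $(s_c,y_c)$ returns $y_c^{\delta}+\gamma=y_c$, so $y_c$ is fixed by the $y$-dynamics whenever $\sin\theta_n=0$; the first component merely shifts $s$ by $T_*$, which is an integer multiple of $\pi/\omega$ and hence trivial on the cylinder.

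Next I would substitute $y_n=y_c(1+x_n)$ and $\theta_n=2\omega s_n$ into the two components of $G$. For the $y$-coordinate, expanding $(1+x_n)^{\delta}=1+\delta x_n+O(x_n^2)$ and using $y_c^{\delta}+\gamma=y_c$ yields
\begin{equation*}
x_{n+1}-x_n=\bigl(\delta y_c^{\delta-1}-1\bigr)\,x_n+\frac{\gamma k_1}{y_c}\sin\theta_n+O(x_n^2).
\end{equation*}
Invoking the weakly attracting regime, write $\delta=1+\epsilon$ with $0<\epsilon\ll 1$ and combine with $y_c^{\delta-1}=1-\gamma/y_c$ to get
\begin{equation*}
\delta y_c^{\delta-1}-1=\epsilon-\frac{\gamma}{y_c}(1+\epsilon)\approx -\frac{\gamma}{y_c},
\end{equation*}
which reproduces the claimed equation for $x_{n+1}-x_n$. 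For the $\theta$-coordinate, the first component of $G$ gives $s_{n+1}-s_n$ as an explicit function of $\ln y_n$; writing $\ln y_n=\ln y_c+\ln(1+x_n)=\ln y_c+x_n+O(x_n^2)$, the constant part evaluates to $T_*=\ell\pi/\omega$ by the centre condition and the linear part contributes a term proportional to $x_n$. Multiplying by $2\omega$ yields the second equation of the lemma.

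The main obstacle is the identification of the effective linear coefficient $-\gamma/y_c$ in the $x$-equation: the raw linearisation gives $\delta y_c^{\delta-1}-1$, and replacing this by $-\gamma/y_c$ is justified only when $\delta-1$ is genuinely smaller than $\gamma/y_c$, which is precisely the content of $\delta\gtrsim 1$. Accordingly, one must quantify the range of $(s_n,y_n)-(s_c,y_c)$ for which the discarded $O(x_n^2)$, $O(\epsilon x_n)$ and $O(\gamma^2)$ terms remain dominated by the retained terms, so that the discrete pendulum genuinely approximates the orbit of $G$ near the centre of frequency locking and not merely the linearisation of $G$ at that point.
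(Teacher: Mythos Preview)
Your proposal is correct and follows essentially the same route as the paper, whose proof is a one-line pointer to \cite{TD1}: expand $G$ in the coordinates $x=y/y_c-1$, $\theta=2\omega s$, truncate to first order in $x_n$, and use the centre identities $y_c^\delta+\gamma=y_c$ and $\sin(2\omega s_c)=0$. You in fact supply more detail than the paper does, and you correctly isolate the one non-trivial step, namely replacing the raw linear coefficient $\delta y_c^{\delta-1}-1=(\delta-1)-\delta\gamma/y_c$ by $-\gamma/y_c$, which is precisely where the hypothesis $\delta\gtrsim 1$ enters. One small slip: you write $y_c=e^{-T_*/K}$, whereas the lemma (and the relation $y(T)=e^{-KT}$ used throughout) give $y_c=e^{-KT_*}$; this does not affect the argument.
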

The proof is easily adapted from  \cite[Section 3.2]{TD1}.
It consists of expanding  the expressions obtained from   $G(s,y)$, truncating to first order in $x_n$ and changing coordinates. 
From this, it follows:

\begin{theorem}[Proposition 3.2 in Tsai and Dawes \cite{TD1}]\label{teoremaPendulo}
For $\gamma<M$, $\delta \gtrsim1$ and for $(s,y)$ near a centre of frequency locking  at
$( s_c ,y_c )=\left(n\pi/2\omega,e^{-K\ell \pi/\omega}\right)$, $\ell\in\NN$, $n\in\{1,2\}$, with $F(\ell \pi/\omega)=\gamma$,
the dynamics of \eqref{G_general} is approximated by the Euler  discretisation of the equation for a damped pendulum with constant torque 
\begin{equation}\label{pendulum}
\theta''+A\theta'+\sin\theta = B\qquad \theta\in\RR\pmod{2\pi}
\end{equation}
 where $\theta'=d\theta/d\tau$ for 
$\dpt \tau=\sqrt{ 2\gamma \omega k_1/Ky_c}$,
 with $\dpt A=\sqrt{\frac{\gamma K}{2\omega k_1y_c} } >0$ and $B=\dfrac{K\ell \pi}{\omega k_1}>0$.
\end{theorem}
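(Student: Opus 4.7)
The plan is to eliminate $x_n$ from the two-dimensional system in Lemma~\ref{lemaCentreInterval}, producing a second-order recurrence in $\theta_n$ alone, which can then be matched term-by-term against the Euler discretization of \eqref{pendulum}. This is the discrete dual of passing from the pendulum ODE to its first-order phase-space form: here the planar map $(x_n,\theta_n)\mapsto(x_{n+1},\theta_{n+1})$ is folded back into a scalar recurrence.

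First I would subtract the $\theta$-equation of Lemma~\ref{lemaCentreInterval} at step $n-1$ from the one at step $n$, giving
\[
\theta_{n+1}-2\theta_n+\theta_{n-1}=-\frac{2\omega}{K}\bigl(x_n-x_{n-1}\bigr).
\]
The $x$-equation at step $n-1$ supplies $x_n-x_{n-1}$, while the $\theta$-equation at step $n-1$, solved for $x_{n-1}$, yields
\[
x_{n-1}=\frac{K}{2\omega}\bigl(2\ell\pi-(\theta_n-\theta_{n-1})\bigr),
\]
the constant $2\ell\pi$ offset being kept explicit rather than reduced modulo $2\pi$. Substituting and tidying up gives
\[
\theta_{n+1}-2\theta_n+\theta_{n-1}+\frac{\gamma}{y_c}(\theta_n-\theta_{n-1})+\frac{2\omega\gamma k_1}{Ky_c}\sin\theta_{n-1}=\frac{2\ell\pi\gamma}{y_c}.
\]

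Dividing by a common step size $h^2$, the left-hand side is in the form of the Euler scheme for $\theta''+A\theta'+C\sin\theta=B$ with $C=2\omega\gamma k_1/(Ky_c h^2)$, $A=\gamma/(y_c h)$ and $B=2\ell\pi\gamma/(y_c h^2)$. Requiring $C=1$ forces $h=\sqrt{2\omega\gamma k_1/(Ky_c)}$, which is exactly the $\tau$ claimed in the statement; the remaining two coefficients then reduce algebraically to $A=\sqrt{\gamma K/(2\omega k_1 y_c)}$ and $B=K\ell\pi/(\omega k_1)$, both positive under the hypotheses $\delta\gtrsim 1$ and $\gamma<M$.

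The hard part is already absorbed into Lemma~\ref{lemaCentreInterval}, where the nonlinear truncation about the centre of frequency locking $(s_c,y_c)$ and the first-order expansion in $x$ were carried out. Granted that input, the argument above is a direct algebraic identification. The one subtlety is the careful bookkeeping of the $2\ell\pi$ term: it must be retained through the elimination, despite $\theta$ being an $S^1$-valued coordinate, because it is precisely what produces the constant torque $B$ in the pendulum equation; discarding it modulo $2\pi$ would lose the external forcing and misidentify the continuum limit.
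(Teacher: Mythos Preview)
Your argument is correct and rests on the same input as the paper, namely Lemma~\ref{lemaCentreInterval}. The only difference is the order of operations. The paper first interprets the two-dimensional recurrence of Lemma~\ref{lemaCentreInterval} as a forward Euler scheme with step $\hat\tau=\gamma/y_c$, passes to the continuum limit $\hat\tau\to 0$ to obtain a planar ODE in $(x,\theta)$, and only then rescales time and eliminates $x$ to recover \eqref{pendulum}. You instead eliminate $x$ at the discrete level, obtain a second-order scalar recurrence in $\theta$, and match it directly against the Euler scheme for \eqref{pendulum}, reading off $h$, $A$, $B$ by normalising the $\sin\theta$ coefficient. Your route is slightly more elementary in that it never invokes a continuum limit, and it makes the identification of the step size $\tau$ with the scaling factor transparent; the paper's route has the advantage of producing the intermediate first-order ODE system explicitly, which is closer to the phase-space picture used later in \S\ref{pend_analysis}. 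Your remark about retaining the $2\ell\pi$ offset is well taken and is exactly the mechanism by which the torque $B$ appears.
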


 The proof consists of taking $\hat\tau=\gamma/y_c= 1-y_c ^{\delta-1}=1-e^{-\frac{K\ell\pi(\delta-1)}{\omega}}$ and estimating the limits when $\hat\tau\to 0$ of 
$$
\frac{x_{n+1}-x_{n}}{\hat{\tau}}= -x_{n}+k_1\sin(\theta_n)
\qquad \text{and} \qquad
\frac{\theta_{n+1}-\theta_{n}}{\hat\tau}=
\frac{2\ell \pi  y_c }{\gamma} -\frac{2\omega y_c }{\gamma K} x_n.
$$
The limit is a system of ordinary differential equations with independent variable $\hat\tau$.
Rescaling the time as $\tau=\hat\tau \sqrt{k_1\xi}$ the system is shown to be equivalent to the pendulum equation, with constants as indicated.

The statement of Theorem~\ref{teoremaPendulo} includes our computations of the pendulum constants $A$ and $B$ from the parameters in our problem.
Note that from the expression of $\tau$ after rescaling, we get $\tau\to 0$ when either $\gamma\to 0$ or $\omega\to  0$.

\begin{figure}
\begin{center}
\includegraphics[width=14cm]{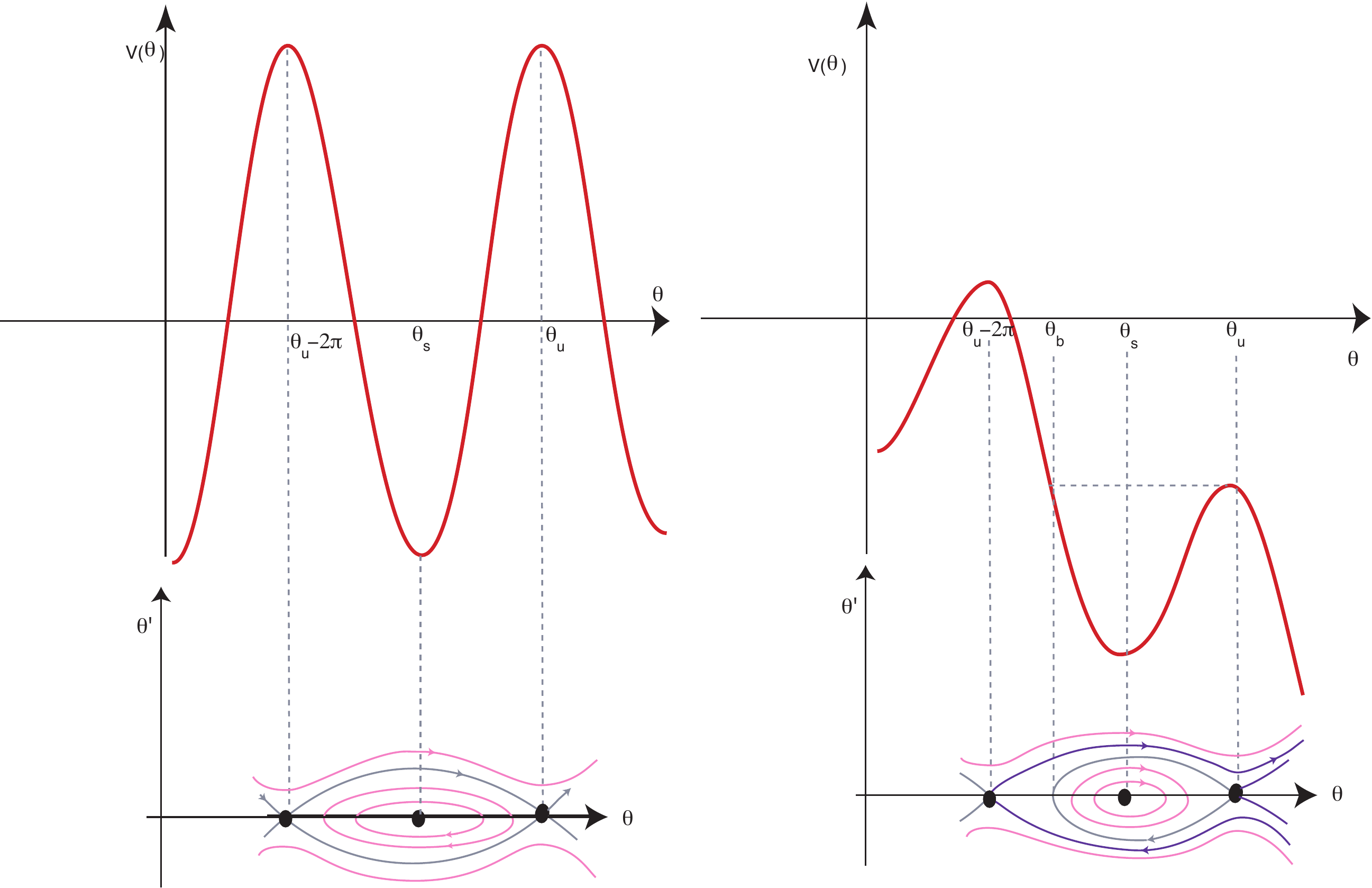}
\end{center}
\caption{\small 
    Phase portraits for the pendulum with no friction \eqref{pendulum}. Left: no torque, $A=B=0$. Right: $A=0$ and $B\neq 0$.}
\label{FigAndronov2}
\end{figure}

\subsection{Analysis of pendulum equation \eqref{pendulum}}
\label{pend_analysis}

The effects of time-periodic forcing on the damped pendulum with torque was extensively studied by Andronov \emph{et al} \cite[Chapter VII]{oscillators}.
We summarise here their description, as well as that of Coullet \emph{et al} \cite{Coulet},
and we use their results to obtain information about our problem.

In the case $A=B=0$, the equation reduces to  that of a simple pendulum with no friction.
There is a first integral where the potential is $V(s)=- \cos s$. 
There are two equilibria, a centre at $\theta_s=0\pmod{2\pi}$ and a saddle at $\theta_u= \pi\pmod{2\pi}$.
There is a pair of solutions connecting the saddle to its copy (see the left hand side of Figure~\ref{FigAndronov2}), forming two homoclinic cycles.
The region delimited by these cycles contains the centre and is foliated by closed orbits with small period, small oscillations of the pendulum.
Outside this region there are closed trajectories that go around the cylinder, corresponding to large rotations where the pendulum goes round indefinitely.
The small closed orbits are curves homotopic to a point in the cylinder, whereas the large rotations cannot be contracted on the cylinder.

In the case $A=0$ and $B\neq 0$, the potential is given by $V(s)= -Bs+\cos s$, for a pendulum with torque and no friction.
If $B>1$ there are no equilibria.
If $B<1$, the two equilibria $\theta_s$ and $\theta_u$ (described above) move but still exist and retain their stability.
Let $\theta_b$ be the value of $\theta\neq \theta_u$ at which the potential has the value $V(\theta_u)$. 
The solution with initial condition $(\theta,\theta')=(\theta_b,0)$ has $\alpha$- and $\omega$-limit $\{\theta_u\}$,
so it forms a homoclinic loop, delimiting a region containing $\theta_s$ and foliated by closed trajectories
(see the right hand side of Figure~\ref{FigAndronov2}).
Each one of the other branches of the stable and unstable manifolds of $\theta_u$ extend indefinitely, forming  a helix around the cylinder.
Solutions starting outside the homoclinic loop turn around the cylinder infinitely many times both in positive and in negative time.

\begin{figure}
\begin{center}
\includegraphics[width=12cm]{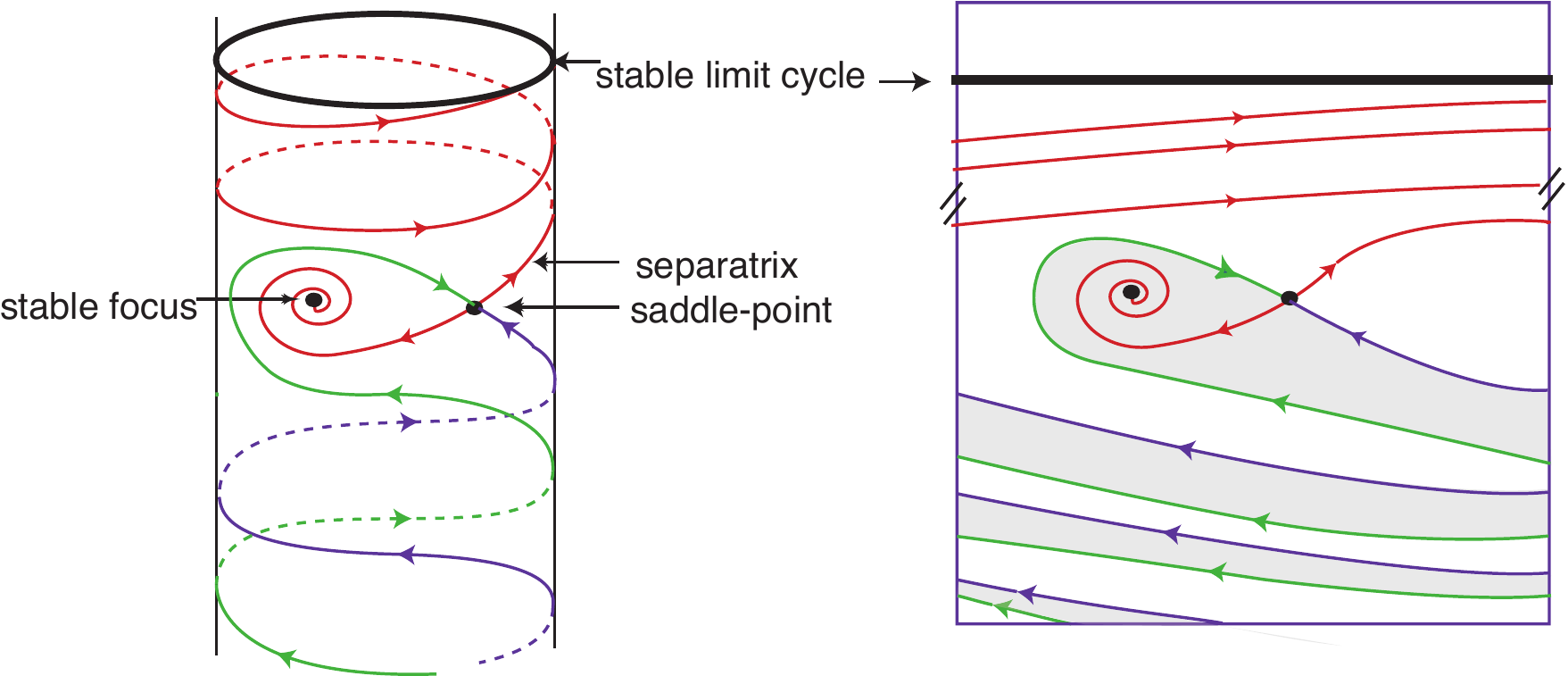}
\end{center}
\caption{\small Phase portrait for the pendulum with torque, in a region of bistability.  Right: the same phase portrait on a chart covering the cylinder.
Points in the grey region have the stable focus as $\omega$-limit, trajectories of points in the white region go to the stable limit cycle and the two basins of attraction are intertwined.} 
\label{andronov1}
\end{figure}

If $A\neq 0$ and $B<1$, weak forcing and damping exist.
 There is 
 a curve in the parameter space $(B, A)$, say $A=\beta(B)$ (see Figure~\ref{Coulet1}) that separates two types of dynamics:
\begin{itemize}
\item If $A>\beta(B)$, then the only stable solution is the equilibrium $\theta_s$.
\item If $A<\beta(B)$, there is \emph{bistability}: a stable equilibrium and an attracting periodic solution coexist (Figure~\ref{andronov1}). Depending on the initial condition, the solution should converge either to $\theta_s$ or to a periodic solution in which the energy lost by damping during one period is balanced by gain in potential energy due to the torque. 
For negative values of $\theta'$ the two basins of attraction are intertwined, delimited by two helices formed by one branch each of the stable and unstable manifolds of $\theta_u$.
\end{itemize}
Finally, if $A\neq 0$ and $B>1$ there are no equilibria and only one stable periodic solution, that 
is not homotopic to a point in the cylinder.

\begin{figure}
\begin{center}
\includegraphics[width=8cm]{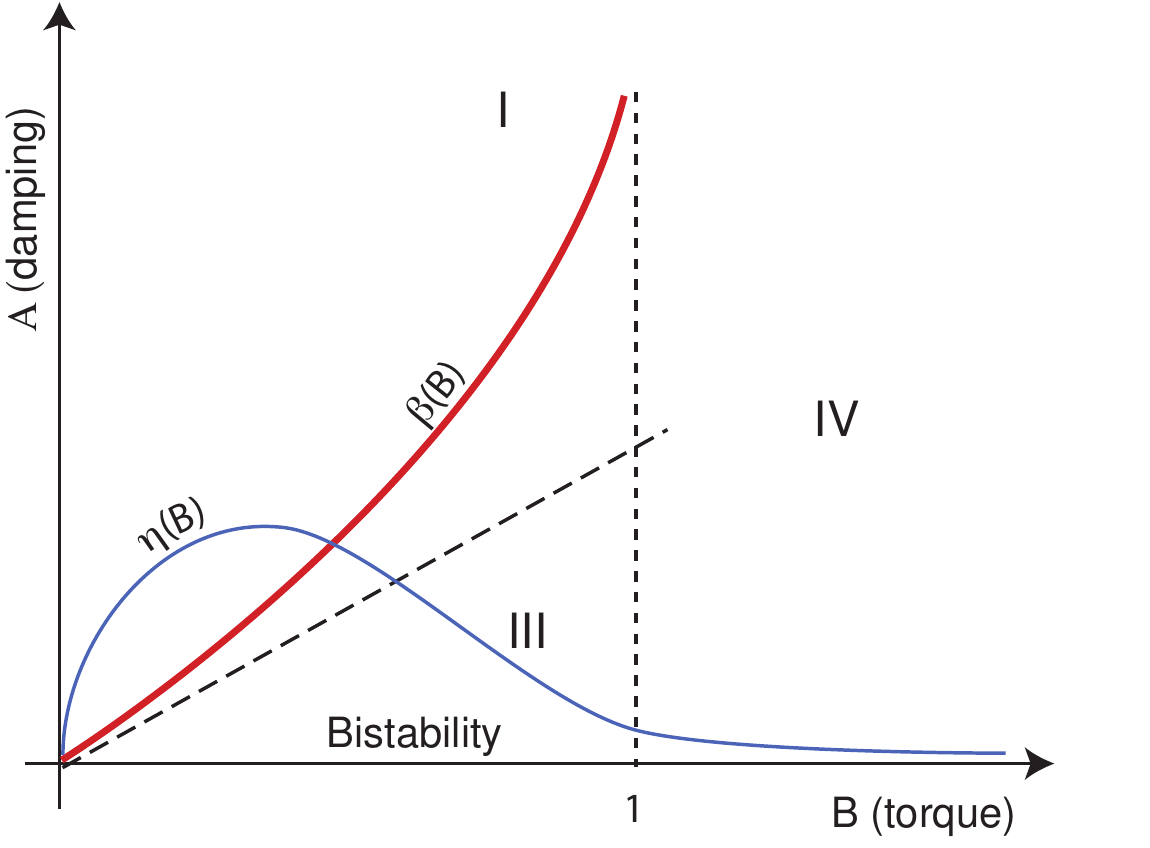}\\
\includegraphics[width=4cm]{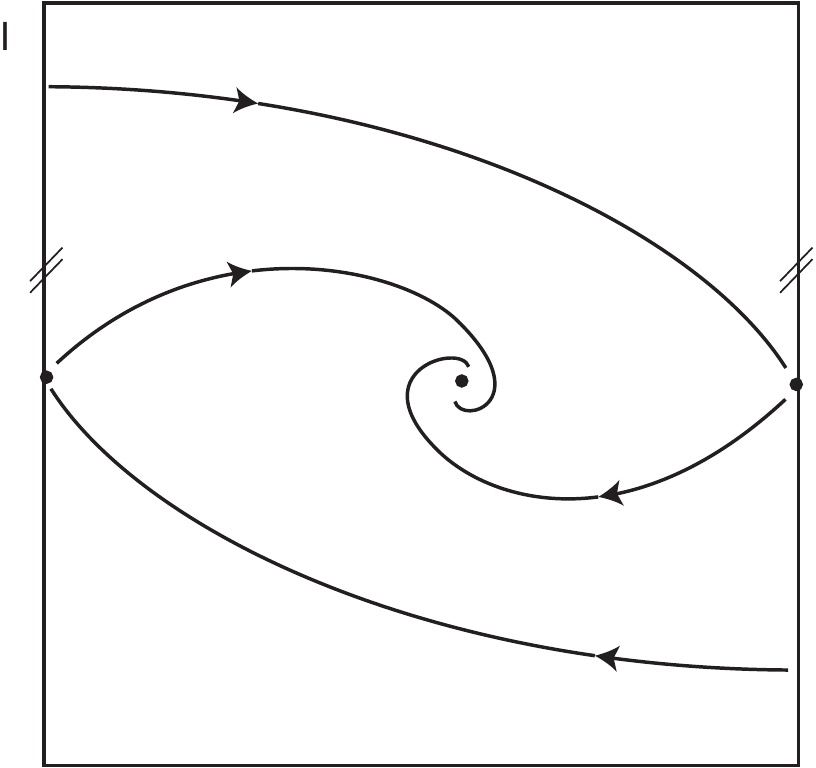}\ 
\includegraphics[width=4cm]{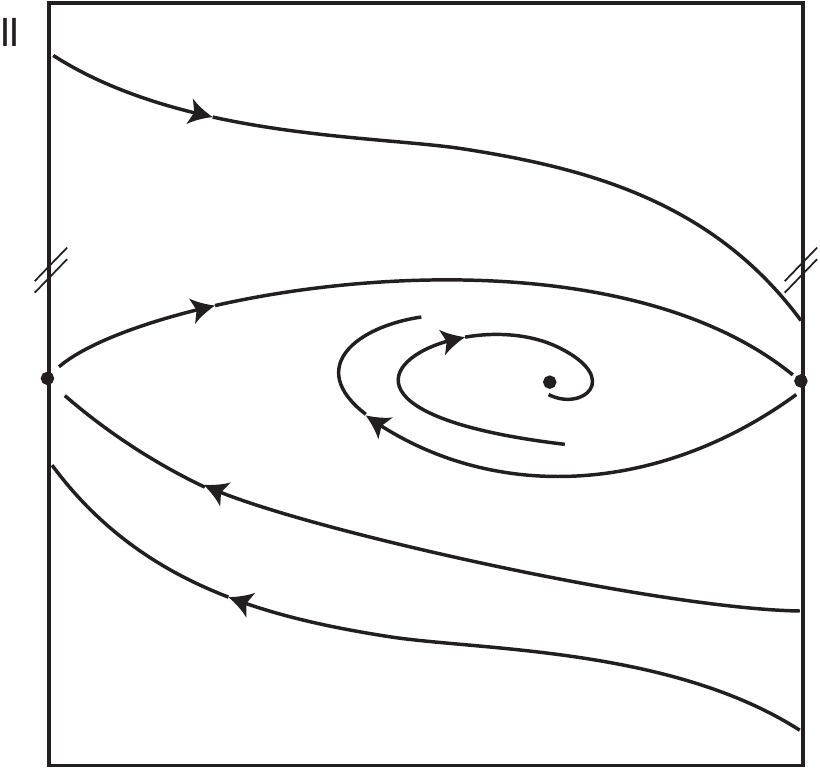}\ 
\includegraphics[width=4cm]{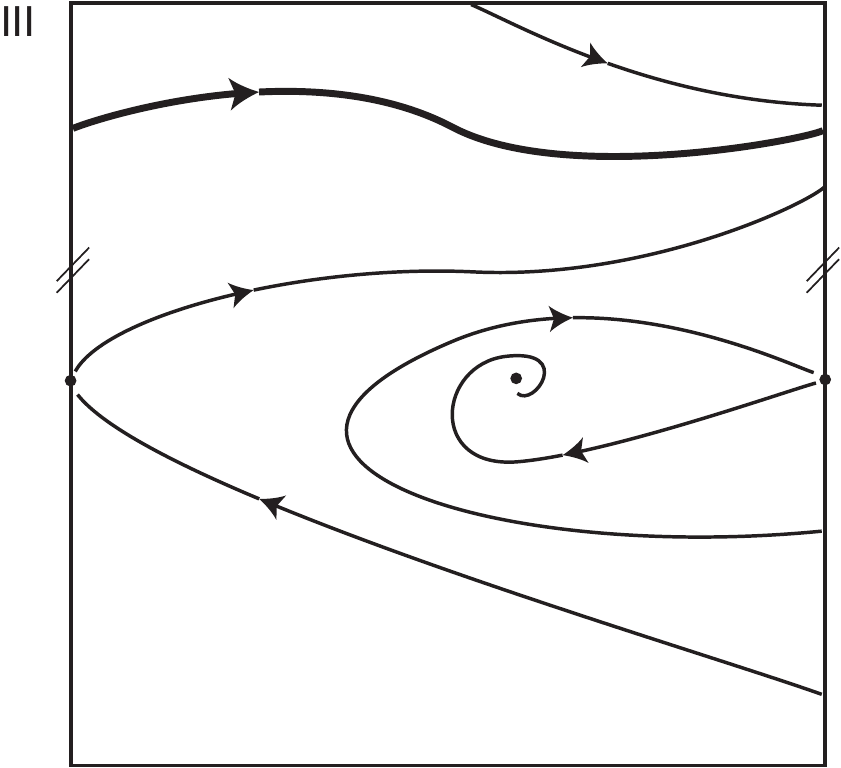}\ 
\includegraphics[width=4cm]{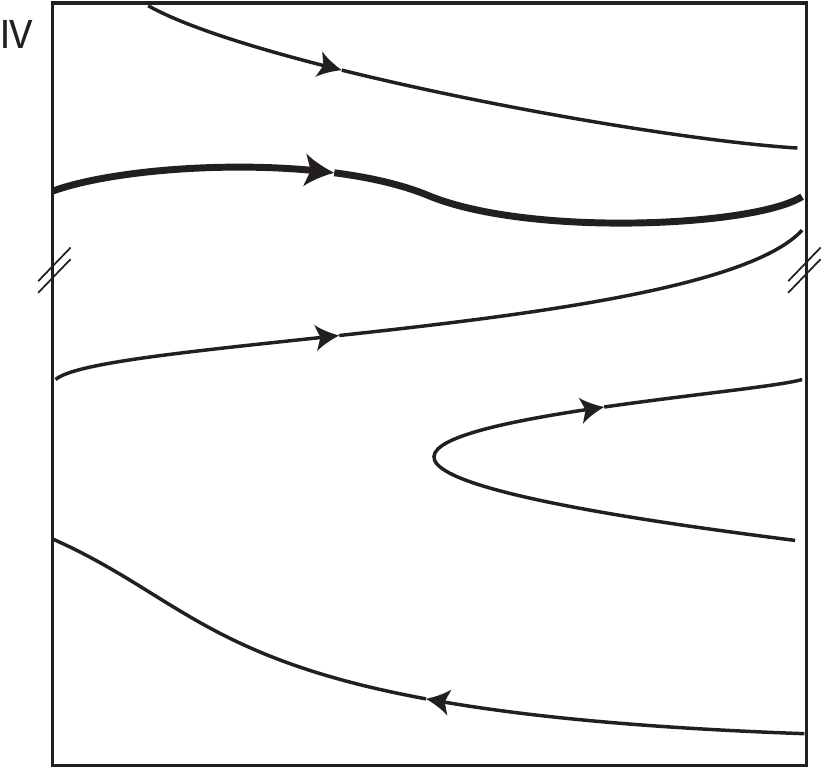}\ 
\end{center}
\caption{\small 
Top: bifurcation diagram for the pendulum with torque, showing the curve $\eta(B)$ that corresponds to the centre of frequency locking.
The dotted line in the diagram, tangent at the origin to $\beta(B)$, has slope $\pi/4$, according to \cite{Coulet}. 
Bottom:  phase portraits for regions in the diagram. For parameters on the red 
curve $\beta(B)$  there is a homoclinic trajectory (phase portrait II)  connecting two successive copies of the saddle.
The thicker curve in regions III and IV is a closed trajectory on the cylinder.
}
\label{Coulet1}
\end{figure}

\subsection{Consequences for the original equation}\label{secApplication}
We complete our analysis describing the application of these results to the map $G(s,y)$ of \eqref{G_general} and specially to the equation \eqref{general}.
The dynamical consequences obtained by different methods may coincide in some cases.
We use the amplitude $\gamma>0$ of the perturbation as a main bifurcation parameter.

For $\delta \gtrsim 1$ the map $G(s,y)$ is a good approximation of the first return map for \eqref{general}, while at the same time, near the centre of frequency locking it behaves like the time-one map for the damped pendulum with torque. 
We start the analysis by the interpretation of the consequences of the data on the pendulum.

Consider $k_1$ and $K$ fixed. 
For each $\gamma<M$ and each $\ell\in\NN$ there are two values of $\omega>0$ such that $\gamma=F(\ell \pi/\omega)$.
These are the parameter values in Lemma~\ref{lemaCentreInterval} for the existence of the centre of frequency locking.
For simplicity we restrict the discussion to the case $\ell=1$, where the centre of frequency locking is a fixed point of $G$.
Proposition~\ref{propMultipleFrequencies} allows us to extend the results to other multiples of the perturbing frequency.

When $\gamma$ decreases, the largest value of $\omega$ such that $F(\pi/\omega)=\gamma$ increases (see Figure~\ref{diff frequencies}).
Taking $\gamma$ small enough ensures $F(\pi/\omega)=\gamma$ with $\omega>K\pi/k_1$ and this guarantees $B<1$ in  Theorem~\ref{teoremaPendulo}.
This is the case when the approximation by the pendulum equation is reasonable: if $B>1$ there is no equilibrium solution for the pendulum, and hence, no frequency locked fixed point.

At a centre of frequency locking, the expressions  for the pendulum constants in Theorem~\ref{teoremaPendulo} may be  rewritten in the form $A=\eta(B)=C_1\sqrt{\gamma Be^{-C_2B}}$ for some positive constants $C_1$ and $C_2$, using the expression $y_c=e^{-K\ell \pi/\omega}$. 
The graph of $\eta(B)$ (see Figure~\ref{Coulet1}) has vertical tangent at $B=0$ and $\eta(B)>0$ for $B>0$ with $\lim_{B\to\infty}\eta(B)=0$.
The constant $B$ does not depend on $\gamma$ and is small for large $\omega$, while
the expression for  $A$  is an increasing function of $\gamma$.
This means that for a given value of $\gamma>0$, we have $\eta(B)>\beta(B)$ at $B$ close to 0, since Coullet \emph{et al} \cite{Coulet} computed  $\beta'(0)=\pi/4$. 
The inequality is reversed as $B$ increases (see Figure~\ref{Coulet1}).
Reducing the value of $\gamma$ moves the point of intersection of $\eta(B)$ with $\beta(B)$ closer to $B=0$.

The region of bistability occurs for $A<\beta(B)$ and $B<1$.
Hence there is no bistability for large values of $\omega$, only for small and intermediate values. 
Therefore for small enough $\gamma>0$ and not very large values of $\omega$, an attracting fixed point of $G$ coexists with a closed invariant curve, the graph of a function $y=H(s)$ on the cylinder.
This is consistent with the numerical findings of \cite{TD2}.

The invariant curve $y=H(s)$ may coincide with the curve $y=h(s)$ obtained in Theorem~\ref{circulo invarianteL}.
If this is the case, the attracting fixed point must lie outside the annulus $|y-\hat{y}|<R\gamma$, which is not an unreasonable assumption if $\gamma$ is small.
For the equation \eqref{general} this means there exists an attracting invariant torus in the extended phase space $\RR^3\times\EU^1$ coexisting with an attracting frequency locked periodic solution.
When $\gamma$ increases in this regime, the periodic solution of the pendulum equation disappears at a homoclinic connection.
Only the  frequency locked periodic solution of \eqref{general} persists.
This is compatible with the hypothesis of Theorem~\ref{circulo invarianteL},
that requires both $\gamma<M$ and 
$\Psi(\hat{y}\pm R\gamma)>Z$.

\begin{figure}[ht]
\begin{center}
\includegraphics[width=12cm]{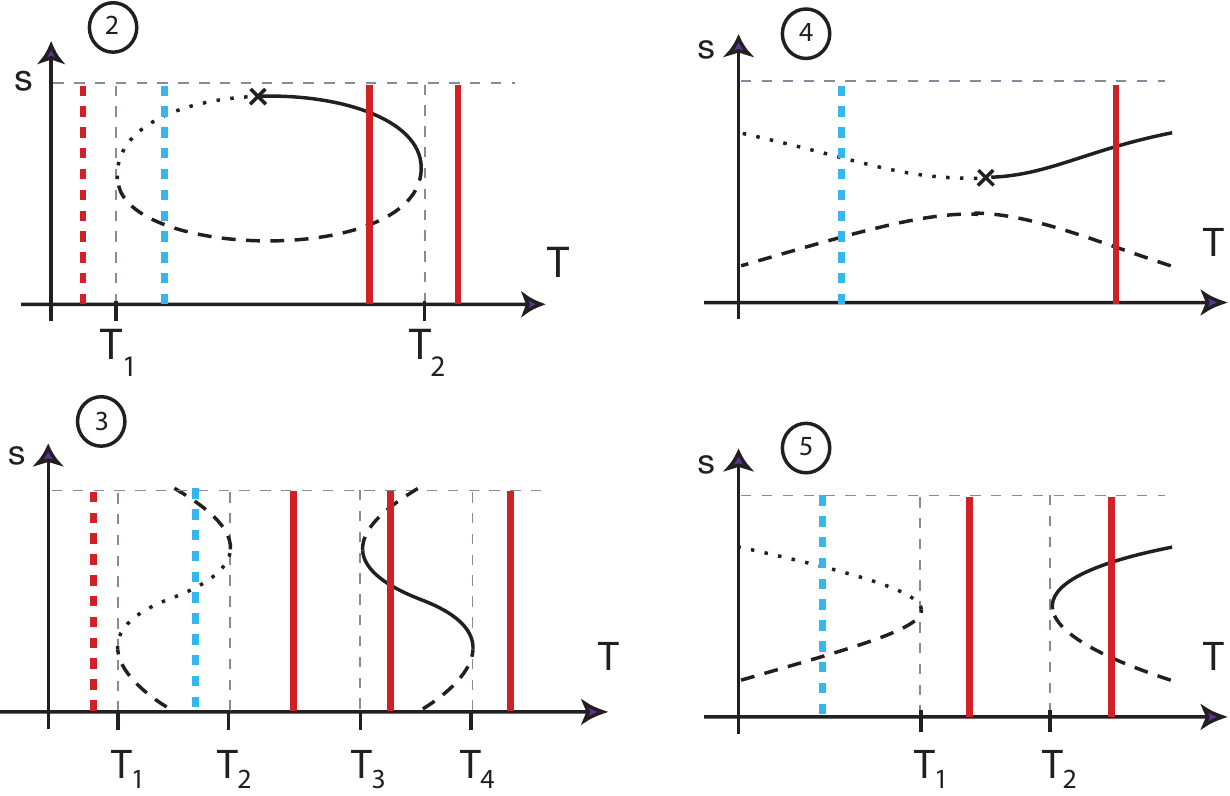}
\end{center}
\caption{\small Values of $T$ that may correspond to the invariant curve $y=h(s)$ of Theorem~\ref{circulo invarianteL}, indicated as thick vertical  lines for the regions of Theorem~\ref{teoremaPeriodicasL}. Solid red lines are positions compatible with the stability assignments of Proposition~\ref{teoremaEstabilidade}, dashed blue lines are not compatible with the coexistence of invariant curve and fixed points.}
 \label{figPositionCurve}
\end{figure}

The existence of the invariant  curve $y=h(s)$ of Theorem~\ref{circulo invarianteL} was established for $\gamma<M$. 
This means it occurs in one of the regions (2)---(5) of Theorem~\ref{teoremaPeriodicasL} (see Figure~\ref{Bif_diag1}).
The condition for existence corresponds to $\omega$ in an open interval, given by the third condition in Theorem~\ref{circulo invarianteL}.
Each value of $\omega$ determines values of $T=n\pi/\omega$, so the curve may contain the fixed or periodic points of $G$ found in Section~\ref{sec 5}.
The fact that the invariant curve attracts the orbits os all points near it restricts the values of $T$ for which it may occur, since fixed points on the curve must be either saddles or sinks (Figure~\ref{figPositionCurve}).
When the curve does not contain fixed points, the dynamics of $G$ on it is simple and similar to a rotation, yielding quasi-periodic solutions of  the differential equation.
When the curve contains a pair of fixed points, one of them is attracting inside the curve and the other repelling,
the overall dynamics for the differential equation is that of an attracting periodic solution.

\section{Discussion}\label{secDiscussion}

We compare our results with those found in previous works by other authors.

Afraimovich \emph{et al} \cite{AHL2001}  discuss the influence of periodic perturbations to a Lotka-Volterra system. 
In the unperturbed case, the system has an asymptotically stable network associated to six equilibria.
They analyse two bifurcation parameters, one of which measures the norm of the non-autonomous perturbation.
They find two curves that divide the parameter plane in three regions, one corresponding  to the existence an attracting invariant torus, another  to chaos,
separated by a transition region.
This is similar to our findings around the  Bogdanov-Takens bifurcation.
In our case the transition happens at heteroclinic tangencies of invariant manifolds that give rise to Newhouse phenomena.

Dawes and T.-L. Tsai \cite{DT3, TD2, TD1}   based their research in the Guckenheimer and Holmes' example \cite{GH88} and have identified three distinct dynamical regimes according to the degree of attraction of the heteroclinic network.
Regions I and II correspond to strong and intermediate attraction lying beyond the scope of this work. 
Our results, where $\delta  \gtrsim 1$, are similar to their region III, where they find bistability.
Under the same conditions on $\delta$ we also obtain ``circle-map-like dynamics'', which they describe only for intermediate values of $\delta$, in region II.
We prove that the saddle-node bifurcations found in  \cite{DT3, TD2, TD1} may be seen as a surface, containing a curve of more complex bifurcation. 
This is one of the ways our results extend theirs.
Other extensions arise from the Bogdanov-Takens bifurcation, which may be seen as an organising center for a wide range of rich dynamics, like the existence of strange attractors. 

In \cite{DT3, TD2, TD1} the authors are often concerned with the dependence of the dynamics on the forcing frequency $\omega$.
They find dynamical structures that are repeated  as $\omega$ varies.
Theorem~\ref{ThF_L} and Proposition~\ref{propMultipleFrequencies} provide  good explanations of this repetitive behaviour.

The differential equation \eqref{general} may be rewritten in $\RR^4$ by adding a coordinate $\theta$ with $\dot\theta= 2\omega$ and
replacing $f(t)$ by $f(\theta)$.
Taking $\theta$ in $\RR\pmod{2\pi}$ the new equations are  $\mathbf{SO}(2)$-equivariant  (have circular symmetry) besides the original $\ZZ_2 \oplus \ZZ_2$ symmetry.
For  $\gamma=0$, its flow has an attracting heteroclinic cycle associated to two hyperbolic periodic solutions.  For $\gamma>0$, a normally hyperbolic attracting torus appears, which persists under small perturbations. This torus may contain higher order subharmonic or dense orbits. 
 Increasing further the magnitude of the non-autonomous perturbation, the torus will break at some point and all the bifurcations studied in the present paper may have a further interpretation in the context of \emph{Arnold tongues}.   
 For $0\leq k_1<1$, the lines $\gamma=M/(1\pm k_1)$ of Figure \ref{Bif_diag1} may be transformed, after a suitable change of coordinates, into a tongue and the graphs may be interpreted as an Arnold tongue. 
 Theorems \ref{Th1}, ~\ref{teoremaPeriodicasL} and  \ref{teoremaPendulo} are consistent with previous results on Arnold tongues \cite{Aronson82} which establish  bifurcations associated to torus breakdown. Putting these pieces all together and relating them to other bifurcations in the literature is the natural continuation of the present work.

\section*{Acknowledgemnts}
We would like to thank an anonymous referee, whose attentive reading and useful comments improved the final version of the article.

\newpage
\appendix
\section{Notation}
\label{Appendix}

We list the main notation for constants and auxiliary functions used in this paper in order of appearance with the reference of the section containing a definition.

\bigbreak
\begin{center}
\begin{tabular}{cll}
Notation & Definition/meaning & Subsection    \\
&&\\
$\widehat\delta $ & $\frac{\alpha-\beta}{\alpha+\beta}$ & \S~\ref{gamma=0},\  \S~\ref{sec_v}\\
&&\\
$\delta$ & $\left(\widehat\delta \right)^2= \frac{(\alpha-\beta)^2}{(\alpha+\beta)^2}$ & \S~\ref{gamma=0} \\
&&\\
$K$ & $\frac{2\alpha}{(\alpha+\beta)^2}$ &  \S~\ref{main results}  \\
&&\\

$M$ & $\delta^{\frac{1}{1-\delta}}-\delta^{\frac{\delta}{1-\delta}}$ &\S~\ref{main results},\ 
 \S~\ref{annulus_sec}\\
&&\\

$f(2\omega\tau)$& $\sin (2  \omega\tau)$&\S~\ref{secLinAut}\\
&&\\

$T_2(0)$&
 $s+ \ln \left( {\varepsilon}/{x_2} \right)^\frac{1}{\alpha+\beta}$&\S~\ref{sec_w}\\
&&\\

$K_1$ & $\int_s^{T_2(0)} e^{-(\alpha+\beta)(\tau-s)}  f(2\omega \tau) d\tau$ & \S~\ref{secTimeDep} \\
&&\\

$\hat{k}$ & $\frac{\sqrt{A^2+4\omega^2}}{A^2}$ (with $A=\alpha-\beta$)  
& \S~\ref{secTimeDep}  \\
&&\\

$\bar{k}$ & $\frac{(\alpha-\beta)^2 \hat{k}}{(\alpha-\beta)^2+4\omega^2}=
\frac{1}{\sqrt{(\alpha-\beta)^2+4\omega^2}}$ 
& \S~\ref{secTimeDep}  \\
&&\\

$k_1$ & $\frac{\overline{k}}{K_1}$ & \S~\ref{secTimeDep}  \\
&&\\


$F(T)$ & $\dpt e^{-KT}-e^{-\delta KT}$ & \S~\ref{secMT}\\
&&\\

$T_M$ & $\dpt\frac{\ln \delta}{K (\delta -1)}$ & \S~\ref{secMT}\\
&&\\

$\hat{y}$ and $\tilde{y}$ & Stable and unstable fixed points of $\bg(y)=y^\delta + \gamma$ & \S~\ref{annulus_sec} \\
&&\\

$R$ & $\dpt\frac{2k_1}{1-\delta \hat{y}^{\delta-1}}>0$ & \S~\ref{annulus_sec} \\
&&\\

$( s_c ,y_c )$& $\dpt\left( \frac{n\pi}{2\omega},e^{-K\ell \pi/\omega}\right)$ \quad $n\in\{1,2\}$,\quad $\ell\in\NN$& \S~\ref{secPendulo}\\
&&\\

$\tau$ & $\dpt\sqrt{\frac{2\gamma\omega k_1}{K y_c}}$ & \S~\ref{secPendulo}\\
&&\\

\end{tabular}
\end{center}
\bigbreak

\end{document}